\newcommand{\A}{\mathcal{A}}
\newcommand{\B}{\mathcal{B}}
\newcommand{\C}{\mathcal{C}}
\newcommand{\D}{\mathcal{D}}
\newcommand{\E}{\mathcal{E}}
\newcommand{\F}{\mathcal{F}}
\renewcommand{\H}{\mathcal{H}}
\newcommand{\K}{\mathcal{K}}
\newcommand{\M}{\mathcal{M}}
\newcommand{\N}{\mathcal{N}}
\renewcommand{\O}{\mathcal{O}}
\renewcommand{\P}{\mathcal{P}}
\newcommand{\Q}{\mathcal{Q}}
\newcommand{\R}{\mathcal{R}}
\renewcommand{\S}{\mathcal{S}}
\newcommand{\T}{\mathcal{T}}
\newcommand{\U}{\mathcal{U}}
\newcommand{\V}{\mathcal{V}}
\newcommand{\X}{\mathfrak{X}}
\newcommand{\Z}{\mathcal{Z}}
\newcommand{\h}{\mathfrak{h}}
\renewcommand{\l}{\ell}
\newcommand{\s}{\mathfrak{s}}
\renewcommand{\AA}{\mathbb{A}}
\newcommand{\CC}{\mathbb{C}}
\newcommand{\FF}{\mathbb{F}}
\newcommand{\GG}{\mathbb{G}}
\newcommand{\HH}{\mathbb{H}}
\newcommand{\LL}{\mathbb{L}}
\newcommand{\NN}{\mathbb{N}}
\newcommand{\PP}{\mathbb{P}}
\newcommand{\QQ}{\mathbb{Q}}
\newcommand{\RR}{\mathbb{R}}
\newcommand{\VV}{\mathbb{V}}
\newcommand{\ZZ}{\mathbb{Z}}
\newcommand{\AbCat}{\operatorname{AbCat}}
\newcommand{\Aff}{\operatorname{Aff}}
\newcommand{\an}{\operatorname{an}}
\newcommand{\AnSt}{\operatorname{AnSt}\!/ }
\newcommand{\AnSta}{\operatorname{AnSta}\!/ }
\newcommand{\AnVar}{\operatorname{AnVar}\!/ }
\newcommand{\Aut}{\operatorname{Aut}}
\newcommand{\Bun}{\operatorname{Bun}}
\newcommand{\Cat}{\operatorname{Cat}}
\newcommand{\cha}{\operatorname{char}}
\newcommand{\Coh}{\operatorname{Coh}}
\newcommand{\coker}{\operatorname{coker}}
\newcommand{\cris}{\operatorname{cris}}
\newcommand{\End}{\operatorname{End}}
\newcommand{\et}{\operatorname{\acute{e}t}}
\newcommand{\Ext}{\operatorname{Ext}}
\newcommand{\Fil}[2]{\operatorname{Fil}^{#1}_{#2}}
\newcommand{\Fun}{\operatorname{Fun}}
\newcommand{\Gal}{\operatorname{Gal}}
\newcommand{\GL}{\operatorname{GL}}
\newcommand{\Gr}{\operatorname{Gr}}
\newcommand{\gr}{\operatorname{gr}}
\newcommand{\Grpd}{\operatorname{Grpd}}
\newcommand{\Hom}{\operatorname{Hom}}
\newcommand{\HHom}{\H\!\operatorname{om}}
\newcommand{\id}{\operatorname{id}}
\newcommand{\im}{\operatorname{im}}
\newcommand{\Ind}{\operatorname{Ind}}
\newcommand{\isoc}{\operatorname{isoc}}
\newcommand{\Mod}{\operatorname{Mod}}
\newcommand{\op}{\operatorname{op}}
\newcommand{\Perf}{\operatorname{Perf}}
\newcommand{\Pic}{\operatorname{Pic}}
\newcommand{\pr}{\operatorname{pr}}
\newcommand{\pt}{\operatorname{pt}}
\newcommand{\proj}{\operatorname{proj}}
\newcommand{\qcoh}{\operatorname{qcoh}}
\newcommand{\refl}{\operatorname{refl}}
\newcommand{\Rep}{\operatorname{Rep}}
\newcommand{\rep}{\operatorname{rep}}
\newcommand{\Rex}{\operatorname{Rex}}
\newcommand{\rk}{\operatorname{rk}}
\newcommand{\sep}{\operatorname{sep}}
\newcommand{\SmVar}{\operatorname{SmVar}\!/ }
\newcommand{\Spec}{\operatorname{Spec}}
\newcommand{\ssi}{\operatorname{ssi}}
\newcommand{\St}{\operatorname{St}\!/ }
\newcommand{\Sta}{\operatorname{Sta}\!/ }
\newcommand{\Sym}{\operatorname{Sym}}
\newcommand{\tors}{\operatorname{tors}}
\newcommand{\val}{\operatorname{val}}
\newcommand{\Var}{\operatorname{Var}\!/ }
\newcommand{\Vect}{\operatorname{Vect}}
\newcommand{\vect}{\operatorname{vect}}
\renewcommand{\bar}{\overline}
\newcommand{\colim}{\varinjlim}
\newcommand{\dashto}{\dashrightarrow}
\newcommand{\del}{\partial}
\newcommand{\disj}{\amalg}
\newcommand{\dom}{\trianglerighteq}
\renewcommand{\epsilon}{\varepsilon}
\renewcommand{\hat}{\widehat}
\renewcommand{\iff}{\Leftrightarrow}
\newcommand{\into}{\hookrightarrow}
\newcommand{\onto}{\twoheadrightarrow}
\newcommand{\isom}{\cong}
\renewcommand{\lim}{\varprojlim}
\newcommand{\longiff}{\Longleftrightarrow}
\newcommand{\longinto}{\lhook\joinrel\longrightarrow}
\newcommand{\longonto}{\relbar\joinrel\twoheadrightarrow}
\newcommand{\longto}{\longrightarrow}
\newcommand{\longxto}[1]{\xrightarrow{\ #1\ }}
\newcommand{\minus}{\smallsetminus}
\renewcommand{\mod}{\bmod}
\renewcommand{\phi}{\varphi}
\renewcommand{\tilde}{\widetilde}
\newcommand{\ubar}{\underline}
\newcommand{\xfrom}[1]{\xleftarrow{#1}}
\newcommand{\xto}[1]{\xrightarrow{#1}}
\newcommand{\bee}[1]{\operatorname{B}\! #1}
\newcommand{\btimes}{\operatorname{\raisebox{-1pt}{\ensuremath{\boxtimes}}}}
\newcommand{\bbmu}{\vphantom{l\mu}\smash{\mbox{\ensuremath{\raisebox{-0.59ex}{\ensuremath{l}}\hspace{-0.15em}\mu\hspace{-0.91em}\raisebox{-0.915ex}{\scalebox{2}{\ensuremath{\color{white}.}}}\hspace{-0.62em}\raisebox{0.75ex}{\scalebox{2}{\ensuremath{\color{white}.}}}\hspace{0.42em}}}}}
\newcommand{\Ga}{\mathbb{G}_a}
\newcommand{\Gm}{\mathbb{G}_m}
\newcommand{\hatotimes}{\mathbin{\hat{\otimes}}}
\newcommand{\isoto}{\xrightarrow{\ \raisebox{-2pt}[0pt][0pt]{\ensuremath{\mathsmaller{\sim}}}\ }}
\newcommand{\pair}[1]{\langle{#1}\rangle}
\newcommand{\pullback}{\mathlarger{\mathlarger{\mathlarger{\mathlarger{\cdot\!\!\lrcorner}}}}}
\newcommand{\pullbackop}{\mathlarger{\mathlarger{\mathlarger{\mathlarger{\llcorner\!\!\cdot}}}}}
\newcommand{\Qlbar}{\overline{\mathbb{Q}}_{\ell}}
\DeclareRobustCommand{\skipTOCentry}[5]{}
\newtheorem{thm}{Theorem}[section]
\newtheorem{cor}[thm]{Corollary}
\newtheorem{lemma}[thm]{Lemma}
\newtheorem{prop}[thm]{Proposition}
\theoremstyle{definition}
\newtheorem{defn}[thm]{Definition}
\newtheorem{ex}[thm]{Example}
\newtheorem{rem}[thm]{Remark}
\numberwithin{equation}{section}
\newcommand{\trl}[1]{#1^{\triangleleft}}
\newcommand{\trr}[1]{#1^{\hspace{.5pt}\triangleright}}
\newcommand{\CHM}[1]{\operatorname{Chow}(#1)}
\newcommand{\CHMeff}[1]{\operatorname{Chow}^{\operatorname{eff}}(#1)}
\newcommand{\Hall}{\operatorname{H}}
\newcommand{\modfp}{\operatorname{mod}}
\renewcommand{\s}[1]{#1^{\operatorname{s}}}
\renewcommand{\ss}[1]{#1^{\operatorname{ss}}}
\begin{document}
%------------------------------------------------------

\title{Equivariant motivic Hall algebras}
\author{Thomas Poguntke}
\address{Hausdorff Center for Mathematics \\ Endenicher Allee 62, 53115 Bonn, Germany}
\email{thomas.poguntke@hcm.uni-bonn.de}
%\urladdr{http://www.math.uni-bonn.de/people/poguntke/}

\keywords{Motivic Hall Algebras, Slope Filtrations, Waldhausen S-Construction}
\subjclass[2010]{14D10, 14D20, 14D23, 16B50, 18D10, 18E10, 18F30, 18G30}

\begin{abstract}
{We introduce a generalization of Joyce's motivic Hall algebra by combining it with Green's parabolic induction product, as well as a non-archimedean variant of it. In the construction, we follow Dyckerhoff-Kapranov's formalism of $2$-Segal objects and their transferred algebra structures. Our main result is the existence of an integration map under any suitable transfer theory, of course including the (analytic) equivariant motivic one. This allows us to study Harder-Narasimhan recursion formulas in new cases.}
\end{abstract}

\date{\today}
\maketitle

{\hypersetup{linkbordercolor=white} \tableofcontents}

\section{Introduction}\label{sec1}

The theory of $2$-Segal objects has led to a systematic framework to study multi-valued and convolution-type algebraic structures \cite{HSS1}, including Hall and Hecke algebras. In this work, we construct a new type of Hall algebra based on this formalism, as a combination of those introduced in \cite{Green1} and \cite{Joy2}. We are motivated by the following algebraic and arithmetic geometric applications.

Let $\E$ be a quasi-abelian category linear over a field $K$, with $\sum_{i \geq 0} \dim_K \Ext^i_{\E}(-,-) < \infty$. Reineke \cite{Rei} introduced the following methodology of counting $\FF_q$-points of quiver moduli spaces in order to infer their Betti numbers over $\mathbb C$. If $K=\FF_q$ and $\E$ is hereditary, there is a morphism on the (completed) Ringel-Hall algebra with values in the (completed) group ring of the Grothendieck group twisted by the Euler form,
\begin{equation}\label{intmap}
\int_{\E}\!: \QQ[[\pi_0(\E^{\simeq})]] \longto \QQ^{\langle\chi^{\op}\rangle}[[K_0(\E)]].
\end{equation}
This allows us to ``integrate" equations in the Hall algebra, arising directly from categorical structures on $\E$ (or geometric structures on its moduli stack of objects), and translate them into interesting identities in more managable rings (like $\QQ$).

In particular, slope filtrations provide a rich such source (cf. \cite{An}). They have their origin in the construction of moduli spaces of vector bundles on curves by means of geometric invariant theory \cite{Mum} (also cf. \cite{King} for the case of moduli spaces of quiver representations). Exploiting these categorical structures to recursively compute the cohomology of the corresponding moduli spaces goes back to Harder-Narasimhan \cite{HN}; for a modern work in the subject, see \cite{MoRe2} for example.

The existence and uniqueness of Harder-Narasimhan filtrations in $\E$ is expressed in the Hall algebra as the single equation
\begin{equation}\label{HNequ}
\mathbbm 1_{\pi_0(\E^{\simeq})} = \sum_{\lambda_1 > \cdots > \lambda_n} \mathbbm 1_{\pi_0(\E_{\lambda_1}^{\operatorname{ss} \simeq})} * \cdots * \mathbbm 1_{\pi_0(\E_{\lambda_n}^{\operatorname{ss} \simeq})} \in \QQ[[\pi_0(\E^{\simeq})]].
\end{equation}
Joyce \cite{Joy2} refines Reineke's point count to arbitrary motivic measures of moduli stacks (of quiver representations or vector bundles) over an arbitrary field $K$. The idea is to replace the number of points $q = \#\mathbb A^1(\FF_q)$ by the affine line itself, as an element $\LL = [\mathbb A^1_K] \in K_0(\Var K)$ of the Grothendieck ring of varieties; in fact, since the Hall algebra crucially relies on keeping track of automorphisms, the Grothendieck ring of algebraic stacks (cf. \cite{Br} and \cite{Toen})
\[K_0(\Sta K) \isom K_0(\Var K)[\LL^{-1},(\LL^n-1)^{-1} \mid n\in\NN].\]
Just as the Ringel-Hall algebra arises as a transfer of the Waldhausen construction \cite{HSS1}, the motivic Hall algebra $\H(\E) = K_0(\Sta \M_{\E})$ is the convolution algebra attached to its stacky version $\S(\E)$. In our non-derived context, there is no suitable general notion of this in the literature as of yet, and we propose the following simple and appropriately functorial construction. The moduli stack of objects of $\E$ is defined as
\[\M_{\E}\!: \Aff_K^{\et} \longto \Grpd,\ \Spec(R) \longmapsto (\E \hatotimes_K R)^{\simeq},\]
where $- \otimes_K -$ means the $\Hom$-wise tensor product and $\hat{\phantom{a}}$ the $K$-linear Cauchy completion. It might be possible to adapt the approach in \cite{CG} to our (non-cocomplete) setting, but this would complicate the following crucial finiteness result. The upcoming work on stack-theoretic GIT of Halpern-Leistner and Heinloth will feature a construction somewhat similar to ours.

\begin{thm}\label{mainthm6}
The functor $\M_{\E}$ defines an algebraic stack, locally of finite type over $K$.
\end{thm}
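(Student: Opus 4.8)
The plan is to exhibit a smooth atlas for $\M_\E$ by a standard "moduli of objects in a finite category" argument, adapted to the quasi-abelian $K$-linear setting. First I would reduce the problem to the case where $\E$ has finitely many indecomposables up to isomorphism, or more precisely, stratify $\M_\E$ by the class in $K_0(\E)$ (equivalently, by the dimension vector of the underlying object): since $\E$ is linear over $K$ with $\sum_i \dim_K \Ext^i_\E(-,-) < \infty$, each such stratum $\M_\E^\alpha$ is an open-and-closed substack, and it suffices to show each is algebraic of finite type. Fix $\alpha \in K_0(\E)$ and choose a "large enough" object $P$ (e.g. a sum of the generators realizing $\alpha$); the key elementary fact is that for $\Spec(R) \in \Aff_K^\et$, every object of $(\E\hatotimes_K R)^\simeq$ of class $\alpha$ is a direct summand (or, in the hereditary case, a quotient/subobject) of $P\otimes_K R$ cut out by an idempotent, because $\End_\E(P)$ is a finite-dimensional $K$-algebra.

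The second step is to present $\M_\E^\alpha$ as a quotient stack. Consider the functor $\underline{\End}_{\E\hatotimes R}(P\otimes R) \cong \End_\E(P)\otimes_K R$, an affine scheme $A = \Spec\big(\Sym_K \End_\E(P)^\vee\big)$ of finite type over $K$; inside it the idempotents satisfying $e^2 = e$ form a closed subscheme, and the locus where the image $e(P\otimes R)$ has class exactly $\alpha$ is open-and-closed. Call the resulting finite-type $K$-scheme $U$. There is a natural map $U \to \M_\E^\alpha$ sending $e \mapsto \im(e)$, and the group scheme $G = \Aut_{\E\hatotimes R}(P\otimes R) = (\End_\E(P)\otimes_K R)^\times$, an open subscheme of $A$ hence smooth affine of finite type over $K$, acts on $U$ by conjugation. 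The heart of the argument is to check that $U \to \M_\E^\alpha$ is smooth and surjective and that $\M_\E^\alpha \simeq [U/G]$: surjectivity on $R$-points is the finiteness fact from the previous paragraph (every object of class $\alpha$ is $\im(e)$ for some $e$); that two idempotents give isomorphic images iff they are $G$-conjugate is a lemma in the idempotent calculus over the (possibly non-commutative) finite $R$-algebra $\End_\E(P)\otimes_K R$; and smoothness of $U \to \M_\E^\alpha$ follows because the fibers are $G$-torsors, or directly by a deformation-theoretic computation using that $P$ is rigid enough.

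The main obstacle I anticipate is \emph{formulating the Cauchy-completed base change $\E\hatotimes_K R$ correctly and checking that the above constructions are compatible with étale localization in $R$} — in particular, verifying the sheaf/stack (descent) condition for $\M_\E$ on $\Aff_K^\et$, i.e. that $\Spec(R)\mapsto (\E\hatotimes_K R)^\simeq$ satisfies effective étale descent. This is where the non-cocompleteness of $\E$ (flagged in the text) bites: one must know that objects and morphisms in $\E\hatotimes_K R$, and idempotent-splittings thereof, glue along an étale cover $R\to R'$, which ultimately reduces to faithfully flat descent for finitely generated projective modules over the finite $K$-algebras $\End_\E(P)$ together with the observation that Cauchy completion commutes with the relevant colimits of rings. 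Once descent is in hand, "locally of finite type" is immediate from the stratification: $\M_\E = \coprod_{\alpha\in K_0(\E)} \M_\E^\alpha$ with each $\M_\E^\alpha \simeq [U_\alpha/G_\alpha]$ a finite-type quotient stack, and the atlas $\coprod_\alpha U_\alpha \to \M_\E$ is smooth and surjective, so $\M_\E$ is algebraic and locally of finite type over $K$.
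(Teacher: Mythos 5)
The local model you set up at the end --- idempotents $e \in \End_{\E}(P)\otimes_K R$ with prescribed image, modulo conjugation by $(\End_{\E}(P)\otimes_K R)^{\times}$ --- is essentially correct and is in substance what the paper uses: it identifies $\M_{\proj_{\End(P)}}$ with To\"en--Vaqui\'e's stack $\U_{\End(P)}$ of modules over the finite-dimensional algebra $\End_{\E}(P)$, which is algebraic and locally of finite presentation. Your treatment of descent (faithfully flat descent for $\E\otimes_K R$ plus the fact that Cauchy completion, being a left adjoint that commutes with finite $2$-limits, preserves the descent data) also matches the paper's Remark \ref{cauchyadj}.

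The genuine gap is in the reduction step. You claim that for fixed $\alpha \in K_0(\E)$ one can choose a single object $P$ so that every object of $(\E\hatotimes_K R)^{\simeq}$ of class $\alpha$ is a retract of $P\otimes_K R$, and hence that each stratum $\M_{\E}^{\alpha}$ is an open-and-closed substack \emph{of finite type}. This is false in the generality of the theorem. The class $\alpha$ does not bound which objects of $\E$ can occur: an object of $\E\hatotimes_K R$ of class $\alpha$ is a retract of some $\bigoplus_i A_i\otimes R$ with the $A_i\in\E$ completely unconstrained by $\alpha$. Concretely, take $\E = \vect_X$ for a smooth projective curve $X$, so that $\M_{\E}\isom\Bun_X$ (Example \ref{Bunex}): the substack of bundles of fixed rank and degree is not of finite type (instability is unbounded), and no single bundle $P$ admits all of them as direct summands --- already on $\PP^1$ the bundles $\O(n)\oplus\O(-n)$ rule this out. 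The paper avoids this by \emph{not} decomposing $\M_{\E}$ into finite-type pieces; instead it produces the representable smooth surjection
\[\rho\!:\ \coprod_{A\in\pi_0(\E^{\times})} \M_{\proj_{\End_{\E}(A)}} \longto \M_{\E},\]
indexed by \emph{all} isomorphism classes of objects of $\E$ rather than by $K_0$-classes. Each summand is a finite-type algebraic stack by the idempotent/Morita argument you describe, and composing with smooth atlases of the summands yields a smooth surjection from a scheme; this gives algebraicity and ``locally of finite type'' without ever asserting finite type of the $K_0$-graded pieces. Your argument goes through essentially only in the cases (such as $\rep_K(Q)$) where $\E$ is of finite type, i.e.\ Morita equivalent to a finite-dimensional algebra, which is strictly weaker than the theorem.
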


The proof relies heavily on the analogous result in \cite{TV}, by locally comparing $\M_{\E}$ to their moduli stack of modules (which provides a suitable moduli stack of objects only in the derived setting). Joyce's analogue of the integration map \eqref{intmap} from \cite{Joy2} and our generalization of it (see below) again provides a Harder-Narasimhan recursion,
\[[\M_{\alpha}] = \sum_{\substack{\alpha_1 > \cdots > \alpha_n \\ \alpha_1 + \cdots + \alpha_n = \alpha}} \LL^{-\sum_{i<j}\chi^{\op}(\alpha_j,\alpha_i)} [\ss\M_{\alpha_1}]\cdots[\ss\M_{\alpha_n}] \in K_0(\Sta K),\]
in degree $\alpha\in K_0(\E)$. Reineke and Joyce provide naive, but combinatorially involved inversion formulas. Even more sophisticated approaches are due to Zagier \cite{Zag} and Laumon-Rapoport \cite{LR}, following the suggestion of Kottwitz to apply the Langlands lemma on Eisenstein series. This is further pursued in \cite{DOR} to compute the cohomology of $p$-adic period domains, which do not fit into Joyce's formalism; our motivation is to accommodate for that.

The cohomology of a smooth projective variety with good reduction over a finite totally ramified extension of complete discretely valued fields of mixed characteristic with perfect residue field carries the structure of a semistable filtered $F$-isocrystal (equivalently, a crystalline Galois representation). This is the $p$-adic analogue of a Hodge structure, so their moduli spaces are the analogues of period domains (cf. \cite{RZ}, and \cite{Har} for the equal characteristic case).

We generalize Joyce's approach in order to include this setting, where we parametrize structures on a fixed $K$-rational object, and compute invariants equivariant with respect to its automorphism group. For example, the Euler characteristic of a flag variety is not just a number (or even a virtual Galois representation), but a virtual representation of $\GL_n(K)$ (possibly additionally with the commuting action of the absolute Galois group).

More precisely, we start with a pair of exact isofibrations $\D \xto{\nu} \B \xfrom{\omega} \E$, and the moduli stack $\ubar{\D} \times_{\M_{\B}} \M_{\E}$ classifies structured objects (in $\E$), where the underlying object (in $\B$) has a $K$-rational structure (in $\D$; which may also be $\FF_1$- instead of $K$-linear, as for the "period domains over $\FF_1$" in \cite{DOR}). Beyond $p$-adic period domains and their variants, many further examples arise from fibre functors of (quasi-)Tannakian categories over $K$, as well as forgetful functors on categories of filtered objects in any $\B$ (e.g. cf. \cite{FilQ}).

Now assume that $\D$ is split (proto-)exact, as we want to think of it rather as a collection of structure groups. The $\D$-equivariant variant of $K_0(\Sta -)$ defines the equivariant motivic Hall algebra
\[\H^{\D}(\E) := K_0^{\D}(\Sta \M_{\E}) \isom \bigoplus_{D\in\pi_0(\D^{\simeq})} K_0^{\Aut(D)}(\Sta \M_{\E}),\]
with parabolic induction product between summands. This is different from the equivariant motivic Hall algebra in \cite{emHA}, which is defined for a good action of an abelian variety $A$ on a smooth projective CY$3$-fold $X$ over $K=\CC$, inducing an action on $\M_{\E} = \Coh_X$, essentially by replacing $\St K$ with $\St BA$ in the construction of the usual motivic Hall algebra. Here is our generalization of Joyce's integration (the case $\D=0$).

\begin{thm}\label{mainthm7}
If $\E$ is hereditary, there is an algebra morphism
\[\int_\E^{\D}\!: \hat\H^{\D}(\E) \longto K_0^{\D}(\Sta K)^{\pair{\chi^{\op}}}[[K_0(\E)]],\]
induced by the natural map of simplicial stacks $\S(\E) \to \ubar{K_0(S(\E))}$.
\end{thm}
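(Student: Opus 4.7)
The plan is to derive $\int_\E^{\D}$ by applying the $\D$-equivariant motivic measure $K_0^{\D}(\Sta-)$, viewed as a transfer theory in the sense of \cite{HSS1}, to the $2$-Segal morphism of simplicial stacks
\[\pi\colon \S(\E) \longto \ubar{K_0(S(\E))}\]
of the theorem statement, which sends at each simplicial degree a flag to the tuple of Grothendieck-group classes of its successive subquotients. That $\pi$ is a morphism of $2$-Segal objects follows from the additivity of $K_0$ on short exact sequences, and the target is $2$-Segal because it arises from the commutative monoid $K_0(\E)$.

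First I would apply the transfer theory to obtain a homomorphism of the resulting convolution algebras
\[K_0^{\D}(\Sta\pi)\colon \H^{\D}(\E) \longto K_0^{\D}(\Sta\ubar{K_0(\E)}),\]
and identify the target with a $K_0^{\D}(\Sta K)$-linear group ring on $K_0(\E)$. Since $\pi$ is $K_0(\E)$-graded, this map extends continuously to the completion $\hat\H^{\D}(\E)$ and to the corresponding power series completion of the target.

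Next I would reconcile the product on the target with the twist by $\pair{\chi^{\op}}$. Comparing the $2$-Segal algebra structures via the explicit fibre calculation on $\S(\E)_{[2]}$ introduces the characteristic twist: the fibre of $(d_2,d_0)\colon \S(\E)_{[2]}\to \M_\E\times\M_\E$ over a generic pair $(A,C)\in\M_\alpha\times\M_\beta$ is the groupoid quotient $[\Ext^1(C,A)/\Hom(C,A)]$, whose class in $K_0^{\D}(\Sta K)$ equals
\[\LL^{\dim\Ext^1(C,A)\,-\,\dim\Hom(C,A)} = \LL^{-\chi(C,A)} = \LL^{-\chi^{\op}(\beta,\alpha)},\]
once $\E$ is assumed hereditary, so that $\Ext^{\geq 2}=0$ and both $\Hom$ and $\Ext^1$ are vector bundles of finite rank on $\M_\alpha\times\M_\beta$. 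Reindexing the basis of the target by these powers of $\LL$ yields the twisted product on $K_0^{\D}(\Sta K)^{\pair{\chi^{\op}}}[K_0(\E)]$, under which the induced map becomes an algebra morphism.

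The main technical obstacle is the equivariant upgrade of this fibre calculation. The $\D$-action on $\M_\E$ arising from $\D\xto{\nu}\B\xfrom{\omega}\E$ lifts to $\S(\E)$ and to the $\Hom$ and $\Ext^1$ bundles on $\M_\alpha\times\M_\beta$, but one must verify that the torsor class identification $[\Ext^1/\Hom]=\LL^{-\chi^{\op}}$ already lives in $K_0^{\D}(\Sta K)$, for which the split (proto-)exact hypothesis on $\D$ is essential: it ensures the relevant linearisations are compatible and that the motivic identity survives the equivariant refinement. Granted this, multiplicativity of $\int_\E^{\D}$ follows from the $2$-Segal formalism, and convergence on the completion from the grading.
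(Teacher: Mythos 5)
Your overall route is the same as the paper's: pass to the map $\S(\E)\to\ubar{K_0(S(\E))}$, use $\sigma$-additivity to identify the target of the pushforward with a group ring over $K_0^{\D}(\Sta K)$, and extract the twist from the Riedtmann-style description of the fibres of the extension correspondence, using hereditarity to express it via $\chi^{\op}$. However, there is a genuine gap in how you globalize the fibre computation. Your assertion that ``both $\Hom$ and $\Ext^1$ are vector bundles of finite rank on $\M_\alpha\times\M_\beta$'' is false: the ranks of $\Hom(C,A)$ and $\Ext^1(C,A)$ jump as $(A,C)$ varies, and only their difference $\chi(C,A)$ is constant on the graded piece. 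Computing the class of the fibre over a single (``generic'') field-valued point does not yet show that pullback--pushforward along $(\del_2,\del_0)$ and $\del_1$ differs from the naive convolution by \emph{multiplication} by $\LL^{-\chi^{\op}(\alpha,\beta)}$ on all of $\T(F_{\alpha,\beta})$. The paper closes this by forming the comparison map $\delta$ into the fibre product of the two correspondences, stratifying any test scheme $X\to F_{\alpha,\beta}$ by the closed loci where $\rk\Hom=d$, observing that over each stratum $\delta$ is a trivial fibration with fibre $[\AA^{d-\chi^{\op}(\alpha,\beta)}/\Ga^{d}]$, and invoking $\sigma$-additivity together with the normalization $\mu_{\T}([\AA^1_K/\Ga])=1$ so that the answer is independent of $d$. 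Without that stratification step your ``reindexing the basis'' claim is unsupported. (There is also a small index slip: with $A\in\M_\alpha$, $C\in\M_\beta$ one gets $-\chi(C,A)=-\chi^{\op}(\alpha,\beta)$, not $-\chi^{\op}(\beta,\alpha)$.)

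On the equivariant side your emphasis is misplaced. The paper does not equivariantly redo the fibre calculation; it proves the integration theorem once for an arbitrary $\sigma$-additive transfer theory $\T$ with $\M_{\E}$ and $(\del_2,\del_0)$ being $\T$-proper and $\mu_{\T}([\AA^1_K/\Ga])=1$, and then simply specializes to $\T=K_0^{\D}(\Sta-)$, whose external product (parabolic induction over $\ubar{S_2(\D)}$) is carried through the formal argument automatically. Split proto-exactness of $\D$ is used to make $K_0^{\D}(\Sta K)$ an algebra and $\H^{\D}(\E)$ well-defined in the first place, not to make ``linearisations compatible'' in the fibre computation; likewise there is no action of $\D$ on $\M_{\E}$ coming from $\D\to\B\from\E$ entering here --- that diagram only produces particular elements of $\H^{\D}(\E)$ in the Harder--Narasimhan application. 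You should either verify the transfer-theoretic hypotheses for $K_0^{\D}(\Sta-)$ directly or restructure the argument at the level of a general $\T$ as the paper does.
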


In fact, the result holds for any suitable transfer theory, in particular the non-archimedean analytic version $K_0^{\D}(\AnSta \M_{\E}^{\an})$ of the equivariant motivic Hall algebra, based on the theory of analytic stacks from \cite{Ul}. This is to circumvent the non-existence of the Harder-Narasimhan recursion for $[\ubar{\D} \times_{\M_{\B}} \M_{\E}] \in \hat\H^{\D}(\E)$ when $K$ is infinite; namely, finiteness of the $\Hom$ spaces of $\D$ is replaced by compactness of the analytified moduli space attached to $\D$.

If $\E$ is a (full abelian subcategory of a) CY$3$-category over $\CC$, there is still a conjectural integration map \cite{KS}, dependent on glueing together the twist from motivic vanishing cycles of functions whose critical loci locally describe the moduli stack.

When $\E = \Coh(X)$, another approach is Bridgeland's Poisson algebra structure on the submodule generated by varieties \cite{Br}; this also works in the setting of \cite{emHA} to study reduced motivic Donaldson-Thomas invariants. It is a very interesting question whether the same is true for our construction, defining equivariant motivic DT-invariants.

Finally, there is a large body of work on the cohomological Hall algebra approach to motivic Donaldson-Thomas theory, which originates in \cite{CoHA}, and has been developed as well as surveyed for instance in \cite{Mei} and \cite{DMDT}, as well as references therein.

Let us briefly outline the structure of the paper. In the first section, we introduce a naive, but general and functorial construction of the moduli stack of objects in a suitable linear category. This will be used throughout for all the geometric examples of Hall algebras we consider. In fact, $\mathsection$\ref{sec7} explains how to extract genuine algebras from unital $2$-Segal objects, in particular the moduli stack version of Waldhausen's $S$-construction.

In $\mathsection$\ref{sec8}, we apply this theory to introduce the main objects of interest, the equivariant motivic Hall algebra and its non-archimedean analytic variant, while the next section contains our main result on the existence of an integration map in a rather general context, including the constructions of the previous section.

As an application, in $\mathsection$\ref{sec10}, we recall the formalism of slope filtrations and produce new cases of the well-studied Harder-Narasimhan recursion and its various inversions.

Finally, $\mathsection$\ref{sec11} outlines the analogue of our theory for Hall modules, including a proof sketch for the existence of integration maps.

\addtocontents{toc}{\skipTOCentry}
\section*{Acknowledgements}
I would like to thank T. Dyckerhoff for suggesting this topic, his constant interest and invaluable contributions. I would also like to thank M. Young for interesting discussions about Hall modules, W. Stern for helpful suggestions, as well as M. Penney and T. Walde for fruitful discussions. Finally, I would like to thank M. Kapranov and S. Mozgovoy for interesting discussions during a visit to Dublin.

\section{Moduli stacks of objects in locally proper categories}\label{sec6}

In this section, we prepare for many of the forthcoming examples of algebraic incarnations of higher Segal objects by proposing a simple definition of the moduli stack of objects in a locally proper Cauchy complete linear category, and by studying its basic properties. While naive, the construction is suitable for our purposes in view of its functoriality.

Let $K$ be a commutative ring (we save some notation here since in examples, $K$ will generally be a field). By a stack over $K$, we will always mean a stack in groupoids on the big \'etale site $\Aff_K = \Aff_K^{\et}$ of affine schemes over $K$. Their category is denoted by $\St K$. Let $\E$ be a $K$-linear category.

\begin{defn}
For a $K$-linear category $\D$, the tensor product $\D \otimes_K \E$ over $K$ has the same objects as $\D \times \E$, but with morphisms defined by the tensor product,
\[\Hom_{\D \otimes_K \E}((A,M),(B,N)) = \Hom_{\D}(M,N) \otimes_K \Hom_{\E}(A,B),\]
and composition given by the tensor product of the composition in $\D$ and $\E$.
\end{defn}

\begin{ex}
Let $\E$ be the category of abelian varieties over a field. Then
\[\E_{\QQ} = \E \otimes_{\ZZ} \QQ\]
is the isogeny category of $\E$. Similarly for $\D \otimes_{\O_F} F$, where $\O_F \subseteq F$ is the ring of integers in a global function field and $\D$ is the category of Drinfeld modules over $\O_F$.
\end{ex}

We denote by $\Grpd$ the category of groupoids.

\begin{defn}\label{maindefn}
The moduli stack of objects in $\E$ is the stack over $K$ defined by
\[\M_{\E}\!: \Aff_K \longto \Grpd,\ S = \Spec(R) \longmapsto (\E \hatotimes_K R)^{\simeq},\]
where on the right-hand side, $R$ is the category with one object with endomorphism ring $R$, and $- \hatotimes_K -$ denotes the $K$-linear Cauchy completion of the tensor product.
\end{defn}

\begin{rem}\label{cauchyadj}
If $\Spec(R') \onto \Spec(R)$ is an \'etale cover in $\Aff_K$, then the equivalence
\begin{equation}\label{naivedesc}
  \begin{tikzcd}[column sep=1.5em]
    \E_R := \E \otimes_K R \ar[r, "{\sim}"] & \lim\big(\E_{R'} \ar[r, shift left=0.8] \ar[r, shift right=0.8] & \E_{R' \otimes_R R'} \ar[r] \ar[r, shift left=1.6] \ar[r, shift right=1.6] & \E_{R' \otimes_R R' \otimes_R R'}\big)
  \end{tikzcd}
\end{equation}
follows from faithfully flat descent. On the other hand, by its universal property, the Cauchy completion is a left adjoint. But it also commutes with finite $2$-limits (cf. \cite{BLS}, $\mathsection$1.2), and therefore, \eqref{naivedesc} implies that $\M_{\E}$ is a stack. Note that $(-)^{\simeq}$ is right adjoint to $\Grpd \into \Cat$, hence commutes with limits.

Note that $\M_{\E}$ is functorial in $\E$ with respect to arbitrary functors. In particular, applied to the unit $\E \to \hat\E$, this induces an equivalence $\M_{\E} \isoto \M_{\hat\E}$. Thus, while Definition \ref{maindefn} makes sense in general, we may assume $\E$ to be Cauchy complete henceforth.
\end{rem}

\begin{defn}
Let $S$ be a qcqs scheme, and $\qcoh_S$ the category of finitely presented quasi-coherent $\O_S$-modules. We denote by $\vect_S$ its full subcategory of locally free sheaves of finite rank. The dual of an object $F\in\qcoh_S$ is defined to be the quasi-coherent $\O_S$-module
\[F^{\vee} = \HHom_{\O_S}(F,\O_S).\]
The sheaf $F$ is called reflexive, if the canonical map to its bidual is an isomorphism,
\[F \isoto F^{\vee\vee}.\]
The category $\refl_S$ is the full subcategory of reflexive $\O_S$-modules
\[\vect_S \subseteq \refl_S \subseteq \qcoh_S.\]
Similarly, if $E$ is a $K$-algebra, we write $\Mod_E$ for the category of (right) $E$-modules, and
\[\proj_E \subseteq \modfp_E\]
for its full subcategories of finitely presented flat, resp. finitely presented, modules.
\end{defn}

\begin{defn}
We say that $\E$ is locally flat if for all $A,B\in\E$, the $K$-module $\Hom(A,B)$ is flat. The category $\E$ is called locally proper if $\Hom(A,B) \in \proj_K$ for all $A,B\in\E$.
\end{defn}

\begin{rem}
The category $\proj_R$ is the Cauchy completion of $R$ itself. This can be thought of as a two-step process, where completion under (finite) direct sums yields the category of finite free $R$-modules, the idempotent completion of which is $\proj_R$. All of this occurs inside the category $\Fun_K(R^{\op},\Mod_K) \isom \Mod_R$ of arbitrary $R$-modules.

Write $S = \Spec(R)$. Because of the above, it seems reasonable to think of $\E \hatotimes_K R$ as parametrizing ``flat $S$-families of objects in $\E$'', at least if $\E$ is locally proper. For $s \in S$, there is an induced ``fibre functor''
\[\E \hatotimes_K R \longto \E \hatotimes_K \kappa(s).\]
\end{rem}

\begin{defn}
The moduli stack of modules of a $K$-linear category $\E$ is defined by
\[\U_{\E}\!: \Aff_K \longto \Grpd,\ S \longmapsto \Fun_K(\E^{\op}, \vect_S)^{\simeq}\]
as a functor of points.
\end{defn}

\begin{rem}
The stack $\U_{\E}$ was introduced by To\"en-Vaqui\'e \cite{TV}, who show that the analogous definition for smooth proper dg-categories actually provides a reasonable (higher) moduli stack of objects, unlike in our linear setting -- the notation is chosen to suggest $\U_{\E}$ be ``underived''.

Let $S = \Spec(R) \in \Aff_K$. Similarly to Remark \ref{cauchyadj}, the assignment $\U_{\E}$ is a stack over $K$, by faithfully flat descent for $\vect_S$, now using the fact that $\Fun_K(\E^{\op}, -)$ is left exact.

If $\E$ is locally proper, then the Yoneda embedding induces a fully faithful functor
\[\E \otimes_K R \longinto \Fun_R(\E_R^{\op}, \proj_R) \isom \Fun_K(\E^{\op}, \proj_R).\]
This defines a representable morphism of stacks over $K$,
\begin{equation}\label{NMU}
\eta\!: \M_{\E} \longto \U_{\E}
\end{equation}
since $\proj_R$ is Cauchy complete, and hence so is $\Fun_K(\E^{\op}, \proj_R)$.
\end{rem}

We consider algebraic stacks in the sense of \cite{Stacks}, Definition \href{http://stacks.math.columbia.edu/tag/026O}{026O}, with the \'etale in place of the fppf-topology; these definitions are essentially equivalent by \textit{op.cit.}, Lemma \href{http://stacks.math.columbia.edu/tag/04X2}{04X2}.

\begin{ex}\label{Bunex}
Let $X$ be a proper scheme over $K$. Then the moduli stack of vector bundles
\[\Bun_X = \coprod_{d\in\NN} \Bun_{X,\GL_d}\]
is an algebraic stack, locally of finite presentation over $K$. Indeed, let $\Coh_X$ be the stack assigning to $S \in \Aff_K$ the groupoid of $S$-flat finitely presented quasi-coherent sheaves on $X \times_K S$. Then by \cite{Stacks}, Theorem \href{http://stacks.math.columbia.edu/tag/09DS}{09DS}, $\Coh_X$ has the desired properties (also cf. \cite{LMB}, Th\'eor\`eme 4.6.2.1; \cite{Bun}, $\mathsection$4.4). But $\Bun_X \subseteq \Coh_X$ is an open substack.

Now denote by $\E = \vect_X$. Then the morphism\eqref{NMU} factors as
\begin{equation}\label{Bunfact}
  \begin{tikzcd}[column sep=tiny]
    & \M_{\E} \ar[rr, "{\eta}"] \ar[dr, "{\pr_X^*}"'] & & \U_{\E} & \ar[l, phantom, "{\ni}"] \Hom\!\mathrlap{{}_{\O_{X \times S}}(\pr_X^*(-),E)} & \ \ \ \ \ \ \ \ \ \ \ \ \ \ \ \ \ \ \ \\
    & & \Bun_X \ar[ur, "{\tilde{\eta}}"'] & \ar[l, phantom, "{\ni}"] E, \ar[ur, mapsto] & \mathrlap{\mbox{on } S\mbox{-points.}} & \ \ \ \ \ \ \ \ \ \ \ \ \ \ \ \ \ \ \ 
  \end{tikzcd}
\end{equation}
It is clear that neither $\eta$ nor $\tilde{\eta}$ is an equivalence. However, we infer that the map
\[\pr_X^*\!: \M_{\E} \isoto \Bun_X\]
is a natural equivalence, as $\tilde{\eta}$ maps into $\M_{\E}$.

If $X$ is flat over $K$, we can consider the full subcategory $\qcoh_X^{\flat} \subseteq \qcoh_X$ of $K$-flat sheaves to get a monomorphism $\M_{\qcoh_X^{\flat}} \into \Coh_X$. At least if $K$ is a field, this is an equivalence
\[\M_{\qcoh_X} \isoto \Coh_X.\]
\end{ex}

\begin{ex}
Suppose that $K$ is a field, and let $\E = \rep_K(Q)$ be the category of finite dimensional representations over $K$ of a finite (connected) quiver $Q$. For each $\ubar d \in \NN^{Q_0}$, set
\[\VV_{Q,\ubar d} = \Spec(\Sym(V_{Q,\ubar d}^{\vee})),\]
where
\[V_{Q,\ubar d} = \bigoplus_{\alpha\in Q_1} \Hom_K(K^{d_{s(\alpha)}}, K^{d_{t(\alpha)}}).\]
Each $\VV_{Q,\ubar d}$ carries an action of the corresponding general linear group by change of basis,
\[g.(\phi_{\alpha})_{\alpha\in Q_1} = (g_{t(\alpha)}\phi_{\alpha}g_{s(\alpha)}^{-1})_{\alpha\in Q_1} \mbox{ for } g \in \GL_{\ubar d}(R),\]
where $\GL_{\ubar d} = \prod_{i\in Q_0} \GL_{d_i}$ and $\phi_{\alpha}\in \Hom_K(K^{d_{s(\alpha)}}, K^{d_{t(\alpha)}}) \otimes_K R$. Then the moduli stack
\[\M'_{\E} := \coprod_{\ubar d \in \NN^{Q_0}} [\VV_{Q,\ubar d}/\GL_{\ubar d}]\]
parametrizes representations of $Q$ in $\vect_S$ (of constant rank on $S$), for $S = \Spec(R) \in \Aff_K$. Thus, the obvious map yields an equivalence of algebraic stacks locally of finite type
\[\M_{\E} \isoto \M'_{\E}.\]
On the other hand, the morphism $\eta'\!: \M'_{\E} \to \U_{\E}$ induced by \eqref{NMU} is only an equivalence if $Q$ is discrete. Otherwise, we may assume $Q = A_2$. Let $F\!: \E^{\op} \to \vect_K$ be a ($K$-linear) functor, and let $E = (K \into K^2)$ denote the regular representation. If $P\in\E$ is a projective $E$-module, then $F(P) \isom \Hom_{\E}(P,F(E))$, since the restricted Yoneda embedding is an equivalence, as
\[\Fun(\proj_E^{\op},\vect_K) \isoto \Fun(E^{\op},\vect_K),\ f \mapsto f(E).\]
Now, if F is representable, it is left exact. By the standard projective resolution, it follows that it is indeed represented by $F(E)$. But consider the functor $F := \Hom_{\E}(H(-),E)$, where
\[H(V \xto{\phi} W) = (0 \to \ker(\phi)).\]
Then $F(E) = 0$ (indeed, $F|_{\proj_E} = 0$), but $F \neq 0$ otherwise, hence it is not representable.

Instead, if $\hat Q$ is the category of finitely presented projective objects in the category of all representations of $Q$ over $K$, then $\U_{\hat Q} \isom \M_{\E}$ provides the expected answer.
\end{ex}

\begin{defn}
The $K$-linear category $\E$ is of finite type if it is Morita equivalent to a finitely presented $K$-algebra. We say that $\E$ is locally of finite type if all of its endomorphism algebras are finitely presented as $K$-algebras.
\end{defn}

\begin{rem}
Note that if $\E$ is of finite type, then the Yoneda embedding
\[\E \longinto \Fun_K(\E^{\op},\Mod_K) \isom \Fun_K(E^{\op},\Mod_K) = \Mod_E\]
maps into the finitely presented projective objects. Thus $\E$ is locally of finite type.
\end{rem}

\begin{thm}\label{NMUprop}
If $\E$ is of finite type, then $\U_{\E}$ is an algebraic stack, locally of finite presentation over $\Spec(K)$. Moreover, if $\E$ is locally proper, the same holds for $\M_{\E}$.
\end{thm}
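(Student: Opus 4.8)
The plan is to reduce everything to the corresponding statement for $\U_\E$, and for that part, to the To\"en–Vaqui\'e result on the moduli stack of modules over a finitely presented algebra. First I would treat $\U_\E$. Since $\E$ is of finite type, it is Morita equivalent to a finitely presented $K$-algebra $E$, and Morita equivalence induces an equivalence $\Fun_K(\E^{\op},\vect_S) \simeq \Fun_K(E^{\op},\vect_S) = \Mod_E(\vect_S)$, compatibly in $S$; hence $\U_\E \simeq \U_E$. The latter is precisely (the underived truncation of) the moduli stack of perfect $E$-modules studied in \cite{TV}: one checks that $S \mapsto \Fun_K(E^{\op},\vect_S)^\simeq$ is the $0$-truncated part of their $\mathcal{M}_E$, or more elementarily, decomposes $\U_E = \coprod_{n\in\NN} \U_E^{(n)}$ by the rank of the underlying $\O_S$-module and presents $\U_E^{(n)}$ as a quotient $[\,\V_n / \GL_n\,]$, where $\V_n \subseteq \underline{\Hom}_K(E, \mathrm{Mat}_n)$ is the locally closed (in fact closed) affine $K$-scheme of $K$-algebra homomorphisms $E \to \mathrm{Mat}_n(-)$ — this is where finite presentation of $E$ enters, cutting $\V_n$ out of affine space by finitely many equations. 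Being a global quotient of a finite-type affine scheme by $\GL_n$, each $\U_E^{(n)}$ is algebraic and of finite presentation over $K$, and $\U_\E$ is their disjoint union, hence locally of finite presentation.

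For $\M_\E$, assume in addition $\E$ locally proper. By \eqref{NMU} there is a representable morphism $\eta\colon \M_\E \to \U_\E$, so it suffices to show $\eta$ is representable by algebraic spaces locally of finite presentation and then invoke that algebraicity descends along such morphisms (\cite{Stacks}, properties of algebraic stacks stable under this kind of base change). Representability of $\eta$ was already recorded in the excerpt; for the finiteness and for algebraicity of the source, I would argue that the essential image of $\M_\E$ inside $\U_\E$ is an \emph{open} substack, cut out by the condition that a given module $F \in \Fun_K(\E^{\op},\vect_S)$ lie, fibrewise over $S$, in the idempotent-additive closure of the representables — equivalently that $F$ be a direct summand of a finite sum of representables $h_{A_i} \otimes_K \O_S$. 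Working locally on a finite-type affine $S$ over which such a presentation and the splitting idempotent exist, this is an open condition (the locus where a fixed map of vector bundles is an idempotent with locally free image of locally constant rank, and where the induced map to the corresponding summand is an isomorphism, is open), so $\M_\E \into \U_\E$ is an open immersion onto the substack of ``objectwise representable'' families. An open substack of an algebraic stack locally of finite presentation over $K$ is again algebraic and locally of finite presentation over $K$, giving the claim.

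The main obstacle I anticipate is precisely the middle step: showing that ``being a flat family of objects of $\E$'' is an \emph{open} (or at least locally closed, finitely presented) condition inside ``being a flat family of $\E$-modules''. The subtlety is that Cauchy completion is an idempotent-completion, so one must control, uniformly in $S$, both the existence of a splitting idempotent presenting $F$ as a summand of representables and the constancy of the relevant ranks; this is exactly the phenomenon that makes $\eta$ fail to be an equivalence (the $A_2$-quiver example in the excerpt). I would handle it by the standard spreading-out argument: pick a point, choose the idempotent and finite generating set there, spread it out to a finite-type neighbourhood using that $\E$ is locally of finite type (automatic from ``of finite type'', and the endomorphism algebras are finitely presented), and check openness of the locus where the spread-out data still splits $F$ with the correct ranks. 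The local properness hypothesis is what guarantees the Yoneda image lands in $\proj_R$ rather than merely $\Mod_R$, so that the notion of ``summand of representables'' is the right one and the ranks in question are finite and locally constant. Everything else — disjoint-union decompositions, quotient presentations, descent of algebraicity along representable morphisms — is routine given \cite{TV} and \cite{Stacks}.
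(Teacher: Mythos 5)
Your treatment of $\U_{\E}$ is essentially the paper's: Morita-transport to a finitely presented algebra $E$ and quote To\"en--Vaqui\'e (your explicit presentation $[\V_n/\GL_n]$ is a reasonable unwinding of their Theorem 1.1, which the paper cites for the representability of $\U_{\E} \to \coprod_d \bee\GL_d$). The problem is the second half. First, your argument for $\M_{\E}$ presupposes that $\U_{\E}$ is already algebraic, i.e.\ that $\E$ is of finite type; but the clause about $\M_{\E}$ is meant to hold under local properness alone --- this is what Theorem \ref{mainthm6} requires, e.g.\ for $\E = \vect_X$, which is locally proper but need not be Morita equivalent to a single finitely presented algebra. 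The paper goes the other way around: for each object $A$ it sets $E_A = \End_{\E}(A)$, which is finite projective, hence finitely presented, over $K$ by local properness; identifies $\M_{\proj_{E_A}}$ with $\U_{E_A}$; and shows that $\rho\colon \coprod_{A} \M_{\proj_{E_A}} \to \M_{\E}$ is representable, smooth and surjective, so that composing with atlases of the sources (algebraic by the first part applied to each $E_A$) yields an atlas for $\M_{\E}$. The idea you are missing is to cover $\M_{\E}$ from below by the moduli stacks attached to the individual endomorphism algebras, rather than to embed it from above into $\U_{\E}$.

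Second, even granting that $\E$ is of finite type, the crux of your route --- that the essential image of $\eta$ is \emph{open} in $\U_{\E}$ --- is exactly the point you flag as ``the main obstacle'', and your sketch does not establish it. The openness of the locus where a \emph{fixed} spread-out idempotent splits $F$ off a fixed finite sum of representables is the easy part; what is needed is that every point $s$ at which the fibre $F_s$ is a retract of representables admits a neighbourhood over which such a presentation and idempotent exist at all. Under Morita equivalence this is the assertion that, among $E\otimes_K\O_S$-modules finitely presented and flat over $\O_S$, the locus where the module is projective over $E\otimes_K\O_S$ is open; proving it requires the fibrewise criterion for flatness together with openness of the flat locus (plus finite projectivity of $E$ over $K$ to promote fibrewise projectivity to projectivity on a neighbourhood), none of which appears in your argument. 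So the proposal has a genuine gap at its central step, and even once repaired it would prove the statement only under the extra finite-type hypothesis.
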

\begin{proof}
Any Morita equivalence between $\E$ and a finitely presented $K$-algebra $E$ induces an equivalence between $\U_{\E}(S)$ and the groupoid of $(E \otimes_K \O_S)$-modules finitely presented and flat as $\O_S$-modules. It is shown in \cite{TV}, Theorem 1.1, that the resulting forgetful morphism
\[\U_{\E} \longto \coprod_{d\in\NN} \bee\GL_d\]
is representable of finite presentation, implying the result for $\U_{\E}$ by \cite{Stacks}, Lemma \href{http://stacks.math.columbia.edu/tag/05UM}{05UM}.

Now let $\E$ be locally proper. For any $A\in\E$, set $E_A := \End_{\E}(A)$, and $\E_A = \proj_{E_A}$ the corresponding Cauchy completion. Then \eqref{NMU} induces an equivalence
\begin{equation}\label{MvsU}
\M_{\E_A} \isoto \U_{E_A}
\end{equation}
of locally finitely presented algebraic stacks over $K$, by the above.

Since $\E$ is Cauchy complete, the tautological inclusion $E_A \into \E$ induces a representable map $\M_{\E_A} \to \M_{\E}$ by faithfulness, and therefore a representable morphism
\[\rho\!: \coprod_{A\in\pi_0(\E^\times)} \M_{\E_A} \longto \M_{\E}.\]
Moreover, $\rho$ is surjective and smooth, which can be seen locally for each summand. Therefore, if we choose a smooth surjection from a scheme $X_A \to \M_{\E_A}$ for each $A\in\pi_0(\E^\times)$, then the composition
\[\coprod_{A\in\pi_0(\E^\times)} X_A \longto \coprod_{A\in\pi_0(\E^\times)} \M_{\E_A} \longxto{\rho} \M_{\E}\]
will have the same properties. Thus, $\M_{\E}$ is also an algebraic stack (its diagonal being representable by faithfulness), and moreover locally of finite presentation since the source of $\rho$ is. Alternatively, let $(R_i)_{i\in I}$ be a filtered inductive system in $\Aff_K^{\op}$. Then we have the natural equivalence
\[\colim (\E \otimes_K R_i) \isoto \E \otimes_K \colim R_i,\]
since extension of scalars is right exact. Thus, we can conclude again that $\M_{\E}$ is locally finitely presented over $K$, by \cite{Stacks}, Proposition \href{http://stacks.math.columbia.edu/tag/0CMY}{0CMY}.
\end{proof}

For the rest of the section, we consider alternative approaches via reflexive and quasi-coherent families, and compare them to $\M_{\E}$. Since they will not be used hereafter, the reader may safely skip to $\mathsection$\ref{sec7}.

While $\qcoh_S$ and $\refl_S$ are not abelian in general, they are finitely cocomplete, which we will make use of in this section. For $\qcoh_S$, this is most easily seen by using that it is exactly the category of finitely presented objects in the abelian category of quasi-coherent sheaves on $S$. In fact, if $S = \Spec(R)$, then
\[\qcoh_S = \mod_R\]
is the completion of $R$ under finite colimits (as a $K$-linear category). This makes it the natural candidate for our below considerations.

For $\refl_S$ on the other hand, we can make use of the following basic result.

\begin{lemma}[cf. \cite{Hart}, Corollary 1.2]
Let $S$ be a qcqs scheme, $F\in\qcoh_S$. Then $F^{\vee} \in \refl_S$.
\end{lemma}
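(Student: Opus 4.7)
The claim is local on $S$, so I would first reduce to the affine case $S = \Spec(R)$ and assume $F = \tilde{M}$ for a finitely presented $R$-module $M$. Writing $(-)^{\vee} = \Hom_R(-,R)$, the goal becomes to show that the unit map $\eta_{M^{\vee}}\colon M^{\vee} \to M^{\vee\vee\vee}$ is an isomorphism. As a first step, I would record the \emph{triangle identity} for the self-adjoint contravariant functor $(-)^{\vee}$, namely $\eta_M^{\vee} \circ \eta_{M^{\vee}} = \id_{M^{\vee}}$; this is checked directly from the definitions, since for $\phi \in M^{\vee}$ and $m \in M$, $\eta_{M^{\vee}}(\phi)(\eta_M(m)) = \eta_M(m)(\phi) = \phi(m)$. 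This exhibits $\eta_{M^{\vee}}$ as a split monomorphism with left-inverse $\eta_M^{\vee}$, so the task reduces to establishing its surjectivity, equivalently the injectivity of $\eta_M^{\vee}$.

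For this, I would choose a finite presentation $R^m \xrightarrow{\psi} R^n \to M \to 0$ and apply $(-)^{\vee}$ to exhibit $M^{\vee}$ as the kernel of $\psi^{\vee}\colon R^n \to R^m$. Dualizing the presentation once more (and using $(R^k)^{\vee\vee} \isom R^k$ canonically) produces a natural sequence $R^m \xrightarrow{\psi} R^n \to M^{\vee\vee}$ whose second map factors as $R^n \onto M \xrightarrow{\eta_M} M^{\vee\vee}$. Consequently the image of $R^n \to M^{\vee\vee}$ coincides with $\im(\eta_M)$ and one has $\coker(R^n \to M^{\vee\vee}) = \coker(\eta_M)$. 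Applying $(-)^{\vee}$ once more identifies $\ker(\eta_M^{\vee})$ with $\coker(\eta_M)^{\vee}$, so the surjectivity of $\eta_{M^{\vee}}$ reduces to the vanishing $\Hom_R(\coker(\eta_M), R) = 0$.

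The vanishing $\coker(\eta_M)^{\vee} = 0$ is the main obstacle, since $(-)^{\vee\vee}$ fails to be left-exact and no purely formal argument in the triangle-identity framework can produce it. My plan would be to pass to stalks and invoke the analysis of \cite{Hart}, $\mathsection$1: the explicit realization of $M^{\vee}$ as a kernel of a map between finite free modules, combined with the structure of local rings, forces any functional on $M^{\vee\vee}$ vanishing on $\im(\eta_M)$ to be zero. Combined with the triangle identity from the first step, this yields the desired two-sided inverse to $\eta_{M^{\vee}}$ and completes the proof upon globalization back to $S$.
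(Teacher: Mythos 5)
Your formal reductions are correct as far as they go: restriction to $S=\Spec(R)$ is legitimate since dualization of finitely presented modules commutes with localization, the triangle identity $\eta_M^{\vee}\circ\eta_{M^{\vee}}=\id_{M^{\vee}}$ does exhibit $\eta_{M^{\vee}}$ as a split monomorphism, and the identification $\ker(\eta_M^{\vee})\isom\coker(\eta_M)^{\vee}$ (which holds for any $M$, with no presentation needed) validly reduces the lemma to the vanishing $\Hom_R(\coker(\eta_M),R)=0$. But that vanishing is precisely where the content of the lemma lives, and your last paragraph does not establish it. ``Pass to stalks and invoke the analysis of \cite{Hart}, $\mathsection$1, combined with the structure of local rings'' is not an argument: the statement you are proving \emph{is} the analogue of \cite{Hart}, Corollary 1.2, so citing that analysis is circular, and Hartshorne's route goes through his Proposition 1.1, which characterizes reflexive sheaves on \emph{integral} (noetherian) schemes via torsion-freeness of a cokernel and therefore does not transfer verbatim to a general qcqs $S$. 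Nor does localness help in any visible way. Worse, your reduction does not make the remaining step easier: writing $Q=\im(\psi^{\vee})\subseteq(R^m)^{\vee}$, the long exact sequence for $\Hom_R(-,R)$ applied to $0\to M^{\vee}\to(R^n)^{\vee}\to Q\to 0$ identifies $\coker(\eta_M)$ with $\Ext^1_R(Q,R)$, and there is no formal reason for the dual of such an Ext-module to vanish; proving it requires the same kind of diagram chase you are trying to avoid.

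For comparison, the paper's proof supplies the missing content directly: it applies the snake lemma to the biduality morphism from $0\to F^{\vee}\to E_0^{\vee}\to Q\to 0$ to $0\to F^{\vee\vee\vee}\to E_0^{\vee\vee\vee}\to Q^{\vee\vee}$. The middle vertical map is an isomorphism because $E_0$ is finite locally free, giving $\ker(\eta_{F^{\vee}})=0$ and $\coker(\eta_{F^{\vee}})\isom\ker(\eta_Q)$; and $\eta_Q$ is injective because the inclusion $Q\into E_1^{\vee}\isom E_1^{\vee\vee\vee}$ factors through it. That last input --- $Q$ embeds into a locally free, hence reflexive, sheaf, so its biduality map is injective --- is the decisive step, and it is entirely absent from your sketch. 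If you wish to keep your reduction, you still owe an argument of exactly this kind for $\coker(\eta_M)^{\vee}=0$; as written, your proposal only proves the purely formal half of the statement, namely that $\eta_{M^{\vee}}$ is a split injection.
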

\begin{proof}
Locally on $S$, we can find a flat cover $E_1 \to E_0 \to F \to 0$, with $E_i\in\vect_S$. Then
\[\begin{tikzcd}
    0 \ar[r] & F^{\vee} \ar[r] \ar[d, "{f}"] & E_0^{\vee} \ar[r] \ar[d, "{\sim}"] & Q \ar[r] \ar[d, "{g}"] & 0 \\
    0 \ar[r] & F^{\vee\vee\vee} \ar[r] & E_0^{\vee\vee\vee} \ar[r] & Q^{\vee\vee} & 
\end{tikzcd}\]
is a commutative diagram of $\O_S$-modules with exact rows, where $Q := \im(E_0^{\vee} \to E_1^{\vee})$. By the snake lemma, $\ker(f)=0$ and $\coker(f) \isom \ker(g) = 0$, since $Q \longinto E_1^{\vee}$ factors over $g$.
\end{proof}

\begin{cor}
The category $\refl_S$ is finitely cocomplete.
\end{cor}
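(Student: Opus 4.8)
The plan is to exhibit $\refl_S$ as a reflective subcategory of $\qcoh_S$, with reflector the bidual functor $F \mapsto F^{\vee\vee}$, and then to invoke the standard fact that a full reflective subcategory inherits all colimits that exist in the ambient category. Since $\qcoh_S$ is finitely cocomplete (as recalled just above), this immediately gives that $\refl_S$ is finitely cocomplete, the colimit of a finite diagram $D\colon I \to \refl_S$ being computed as the bidual of its colimit taken in $\qcoh_S$.

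The first step is to check that $(-)^{\vee\vee}$ genuinely defines a functor $\qcoh_S \to \refl_S$. That dualization is an endofunctor of $\qcoh_S$ is built into the statement of the preceding Lemma; and for any $F\in\qcoh_S$ we have $F^{\vee\vee} = (F^{\vee})^{\vee}$, which lies in $\refl_S$ by that same Lemma applied to $F^{\vee}\in\qcoh_S$. The candidate unit of the adjunction is the canonical biduality morphism $c_F\colon F \to F^{\vee\vee}$, natural in $F$. One then has to verify that for every reflexive $G$, precomposition with $c_F$ induces a bijection
\[\Hom_{\qcoh_S}(F^{\vee\vee}, G) \isoto \Hom_{\qcoh_S}(F, G).\]

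Surjectivity here is formal: given $\psi\colon F \to G$, the morphism $c_G^{-1}\circ\psi^{\vee\vee}\colon F^{\vee\vee} \to G$ (using that $c_G$ is invertible, as $G$ is reflexive) pulls back along $c_F$ to $\psi$, by naturality of $c$. For injectivity, the one non-formal ingredient is again the Lemma: the triangle identity for the contravariant self-duality $(-)^{\vee}$ gives $(c_F)^{\vee}\circ c_{F^{\vee}} = \id_{F^{\vee}}$, and since $F^{\vee}$ is reflexive, $c_{F^{\vee}}$ is an isomorphism, whence $(c_F)^{\vee} = (c_{F^{\vee}})^{-1}$ is an isomorphism $F^{\vee\vee\vee}\isoto F^{\vee}$. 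Thus if $\phi\circ c_F = 0$, applying $(-)^{\vee}$ and cancelling the isomorphism $(c_F)^{\vee}$ forces $\phi^{\vee} = 0$, hence $\phi^{\vee\vee} = 0$, hence $c_G\circ\phi = \phi^{\vee\vee}\circ c_{F^{\vee\vee}} = 0$ by naturality, and finally $\phi = 0$ since $c_G$ is a monomorphism.

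With the adjunction $(-)^{\vee\vee}\dashv(\refl_S\hookrightarrow\qcoh_S)$ established, the conclusion is purely formal: the reflector preserves colimits (being a left adjoint) and restricts to the identity on $\refl_S$, so for a finite diagram $D$ in $\refl_S$ the object $(\colim_{\qcoh_S} D)^{\vee\vee}$ corepresents $A \mapsto \lim_I \Hom_{\refl_S}(D, A)$ and hence is a colimit of $D$ in $\refl_S$. I expect no real obstacle beyond this bookkeeping; the only substantive input — and the only place where the geometry of $S$ enters at all — is the injectivity step, which rests entirely on the already established fact that duals are reflexive.
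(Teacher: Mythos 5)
Your proposal is correct and follows the same route as the paper: both exhibit $(-)^{\vee\vee}$ as a left adjoint (reflector) to the inclusion $\refl_S \hookrightarrow \qcoh_S$, using that duals are reflexive, and then compute finite colimits as biduals of colimits in $\qcoh_S$. The paper simply asserts the hom-set bijection $f \mapsto f\circ(F\to F^{\vee\vee})$ with inverse $g \mapsto g^{\vee\vee}$, whereas you verify it in detail via the triangle identity; the content is the same.
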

\begin{proof}
The functor $\qcoh_S \longto \refl_S,\ F \longmapsto F^{\vee\vee}$, is left adjoint to the inclusion, via
\[\Hom(F^{\vee\vee},E) \isoto \Hom(F,E),\ f \longmapsto f \circ (F \to F^{\vee\vee}),\ g^{\vee\vee} \longmapsfrom g.\]
Hence the bidual of the colimit taken in $\qcoh_S$ of a finite diagram defines a colimit in $\refl_S$.
\end{proof}

\begin{rem}\label{nocoker}
Let $K$ be a field, and consider the local scheme $S = \Spec(K[[x,y]]/(xy))$, with cyclic module
\[(x) \subseteq \O_S.\]
This is easily seen to be reflexive directly, but of course it cannot be free. In fact, $(x)$ is the cokernel of $y\!: \O_S \to \O_S$ in $\refl_S$, which does not have a cokernel in $\vect_S$, because $y$ would have to act trivially on it. Thus $\vect_S$ is not finitely cocomplete, hence absent from the following definition.
\end{rem}

From now on, we assume that the category $\E$ is finitely cocomplete.

\begin{defn}
The moduli stack of coherent, resp. reflexive, families of objects in $\E$ is the stack $\Q_{\E}$, resp. $\R_{\E}$, over $K$, defined as the \'etale stackification of the functor
\begin{equation}\label{QRdefn}
  \begin{aligned}
  & \Aff_K \longto \Grpd,\ S \longmapsto (\E \btimes_K \qcoh_S)^{\simeq}, \\
  \mbox{resp. } & \Aff_K \longto \Grpd,\ S \longmapsto (\E \btimes_K \refl_S)^{\simeq},
  \end{aligned}
\end{equation}
where $- \btimes_K -$ denotes Kelly's tensor product of finitely cocomplete $K$-linear categories.
\end{defn}

\begin{rem}
Let $f\!: S' \to S$ be any morphism in $\Aff_K$. We obtain a (right exact) functor
\[\E \btimes_K f^*\!:\ \E \btimes_K \qcoh_S \longto \E \btimes_K \qcoh_{S'}\]
from the universal property of the tensor product (\cite{LF}, Theorem 7). Similarly, the functor
\[\refl_S \longto \refl_{S'},\ F \longmapsto (f^*F)^{\vee\vee},\]
is right exact by definition, whence we obtain $\E \btimes_K \refl_S \longto \E \btimes_K \refl_{S'}$.

In general, it is not clear whether the stackification in \eqref{QRdefn} is redundant. However, as a special case of \cite{Sch}, Theorem 5.1, if $\E$ carries a symmetric monoidal structure, right exact in both variables, then $- \btimes_K -$ is a $2$-coproduct, thus commutes with finite $2$-limits.

Therefore, in this case, we are reduced to (fpqc-)descent for quasi-coherent modules, which implies descent for $\qcoh_S$ since finite presentation descends. For $\refl_S$, we use that the dual
\begin{equation}
f^* F^{\vee} = \HHom_{\O_S}(F,\O_S) \otimes_{\O_S} \O_{S'} \isoto \HHom_{\O_{S'}}(F \otimes_{\O_S} \O_{S'}, \O_{S'}) = (f^*F)^{\vee}
\end{equation}
is stable under (faithfully) flat base change $S' \to S$.
\end{rem}

\begin{rem}
If $K$ is a field, then $\Q_{\E}$ has the expected groupoid of $K$-points $\E^{\simeq}$, since
\[\D \isoto \Rex_K(\qcoh_K, \D) = \Rex_K(\vect_K, \D),\ D \longmapsto (K^n \mapsto D^{\oplus n}),\]
for every finitely cocomplete $K$-linear category $\D$. So indeed, $\E$ satisfies the universal property
\[\Rex_K(\E, \D) \isoto \Rex_K(\E, \Rex_K(\qcoh_K, \D))\]
of $\E \btimes_K \qcoh_K$, and $\Q_{\E}(K) = (\E \btimes_K \qcoh_K)^{\simeq}$ by the universal property of the stackification. This argument of course also shows that $\R_{\E}(K) \isom \E^{\simeq}$ as well.

Moreover, if $\E$ is an abelian category of finite length, and $L|K$ is a finite extension, then
\[\E \btimes_K \qcoh_L = \E \btimes_K \vect_L = \E \btimes_K \refl_L\]
coincides with Deligne's tensor product of abelian categories, which follows as a special case from \cite{LF}, Proposition 22, together with \textit{loc.cit.}, Theorem 18.
\end{rem}

\begin{lemma}
There are natural representable morphisms of stacks over $K$, as follows.
\begin{equation}\label{QNR}
\Q_{\E} \xfrom{\phi} \M_{\E} \xto{\rho} \R_{\E}
\end{equation}
\end{lemma}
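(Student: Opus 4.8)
The plan is to construct the two morphisms from the appropriate universal properties, functorially in $S$, and then reduce representability to a local computation. First I would produce $\phi$: for each $S = \Spec(R) \in \Aff_K$ there is a canonical right exact functor $\qcoh_S \to \qcoh_S$ (the identity) but, more to the point, the Cauchy completion $\E \hatotimes_K R$ maps into $\E \btimes_K \qcoh_S$ via the composite $\E \hatotimes_K R \to \E \btimes_K \proj_R \to \E \btimes_K \qcoh_S$, using that $\proj_R \subseteq \qcoh_S = \mod_R$ is the Cauchy completion sitting inside the finite-colimit completion. Passing to cores $(-)^{\simeq}$ and stackifying yields $\phi\colon \M_{\E} \to \Q_{\E}$; naturality in $S$ follows because base change on both sides is induced by the universal properties (of $\hatotimes$ and of $\btimes$) applied to the same ring map, so the square commutes up to canonical $2$-isomorphism. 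For $\rho$, I would instead use the bidual: the functor $\proj_R \to \refl_S$, $P \mapsto P$ (a finite projective module over $R$ is already reflexive), is right exact, inducing $\E \hatotimes_K R \to \E \btimes_K \refl_S$, hence after cores and stackification a map $\rho\colon \M_{\E} \to \R_{\E}$; again naturality in $S$ is formal from the universal properties and the fact that $P \mapsto (f^*P)^{\vee\vee}$ agrees with $P \mapsto f^*P$ on finite projectives.

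Next I would address representability. The cleanest route is to use the smooth surjection $\rho_0\colon \coprod_{A \in \pi_0(\E^{\times})} \M_{\E_A} \to \M_{\E}$ from the proof of Theorem \ref{NMUprop}, where $\E_A = \proj_{E_A}$: since that map is smooth and surjective, it suffices (by descent of representability along smooth covers of the target, applied with the source diagonal also representable by faithfulness) to check that $\phi$ and $\rho$ restricted to each $\M_{\E_A}$ are representable. For $\E_A = \proj_{E_A}$ with $E_A$ an honest $K$-algebra, the categories $\E_A \btimes_K \qcoh_S$ and $\E_A \btimes_K \refl_S$ unwind concretely — a finitely cocomplete enrichment of $\mod_{E_A}$ — so that $\Q_{\E_A}(S)$ and $\R_{\E_A}(S)$ become groupoids of $(E_A \otimes_K \O_S)$-modules that are, respectively, finitely presented (resp. reflexive) over $\O_S$, with $\M_{\E_A}(S)$ picking out those that are $\O_S$-flat. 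In these terms $\phi$ and $\rho$ are visibly inclusions of a subfunctor, and the relevant fibre products can be analyzed as loci inside the To\"en--Vaqui\'e-type moduli of modules, which is representable of finite presentation over $\coprod_d \bee\GL_d$ by \cite{TV}, Theorem 1.1.

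The main obstacle I anticipate is twofold: keeping track of the \'etale stackification in \eqref{QRdefn} (so that $\phi$ and $\rho$ are genuinely morphisms of stacks and not just of prestacks), and verifying that the comparison is \emph{representable} rather than merely a morphism of stacks. The first is handled by the remark preceding the lemma: in the presence of a right exact symmetric monoidal structure, $\btimes_K$ is a $2$-coproduct and the presheaf in \eqref{QRdefn} is already a stack, so in the monoidal case no stackification correction is needed; in general one invokes the universal property of stackification, under which $\phi$ and $\rho$ factor automatically since $\M_{\E}$ is already a stack by Remark \ref{cauchyadj}. For representability, the key input is faithfulness of the structure functors $E_A \to \E$, $\proj_{E_A} \to \qcoh_S$, $\proj_{E_A} \to \refl_S$, which makes all the diagonals representable and reduces everything, via the smooth chart, to the affine-algebra case where \cite{TV} applies. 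I would expect the write-up to be short once the two functors are named; the bulk of any genuine work is the bookkeeping in identifying $\E_A \btimes_K \qcoh_S$ with a module category, which I would cite from \cite{LF} rather than reprove.
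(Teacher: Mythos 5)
Your construction of the two morphisms is essentially the paper's: the tautological embedding $R \into \D$ for $\D \in \{\qcoh_S,\refl_S\}$ gives $\E \otimes_K R \to \E \btimes_K \D$, and one extends over the Cauchy completion and composes with the stackification unit. Two small cautions there: the intermediate object $\E \btimes_K \proj_R$ you write down is not defined, since $\proj_R$ is not finitely cocomplete and $\btimes_K$ only makes sense for finitely cocomplete categories; and the assertion that $\E \hatotimes_K R$ actually lands inside $\E \btimes_K \D$ is not automatic from $\proj_R \subseteq \qcoh_S$ --- the paper verifies it by viewing both $\E \hatotimes_K \D$ and $\E \btimes_K \D$ as full subcategories of $\Fun_K(\E^{\op} \otimes_K \D^{\op},\Mod_K)$ and observing that finite sums of retracts of representables are left exact and finitely presented. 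You should say this (or an equivalent) explicitly.

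The representability argument, however, has a genuine gap: representability of $\phi\colon \M_{\E} \to \Q_{\E}$ cannot be checked after precomposing with the smooth surjection $\coprod_A \M_{\E_A} \to \M_{\E}$, because that is a cover of the \emph{source}, not the target. (Compare $\Spec(K) \to \bee{G} \to \Spec(K)$: the composite is representable and all diagonals are representable, yet $\bee{G} \to \Spec(K)$ is not.) What is actually needed is that $\phi$ and $\rho$ are faithful on fibre categories, so that the fibre product with any scheme is fibred in setoids; and this is immediate from the full faithfulness of $\E \hatotimes_K R \to \E \btimes_K \D$ established above, which is exactly the paper's one-line conclusion. Your closing appeal to faithfulness concerns the wrong functors (the chart inclusions $E_A \to \E$, not $\phi$ and $\rho$ themselves), and the detour through To\"en--Vaqui\'e does not apply as stated: $\Q_{\E_A}$ and $\R_{\E_A}$ are not the moduli of modules $\U_{\E_A}$, and indeed are not even defined, since $\E_A = \proj_{E_A}$ is not finitely cocomplete. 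Dropping the chart reduction entirely and recording the full faithfulness of the comparison functor on $S$-points repairs the proof and shortens it considerably.
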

\begin{proof}
Let $S = \Spec(R) \in \Aff_K$, and let $\D$ be a finitely cocomplete $K$-linear category with symmetric monoidal structure, right exact in both variables, and $\End(\mathbbm 1) = R$. In particular, this applies to $\D \in \{\qcoh_S,\refl_S\}$. The tautological embedding $R \into \D$ induces a natural functor
\[\E \otimes_K R \longinto \E \otimes_K \D \longto \E \btimes_K \D.\]
Thus, we obtain maps $\phi$ and $\rho$ as in \eqref{QNR} by composition with the respective adjunction morphism of the stackification (which is representable). Indeed, the tautological functor from above yields
\[\E \hatotimes_K R \longinto \E \hatotimes_K \D.\]
By construction, both $\E \hatotimes_K \D$ and $\E \btimes_K \D$ are full subcategories of $\Fun_K(\E^{\op} \otimes_K \D^{\op},\Mod_K)$. The former consists of finite direct sums of retracts of representable functors. In particular, these are left exact and finitely presented. But that means that they lie in $\E \btimes_K \D$.

The resulting morphisms $\phi\!: \M_{\E} \to \Q_{\E}$ and $\rho\!: \M_{\E} \to \R_{\E}$ are representable by fully faithfulness.
\end{proof}

\begin{rem}
It is explained in \cite{LF}, Remark 9, when the functor $\E \otimes_K \D \longto \E \btimes_K \D$ is fully faithful. In particular, this applies when $\Spec(K)$ is reduced of dimension $0$.
\end{rem}

\begin{ex}
Let $K$ be a field, and $X$ be a smooth projective (geometrically connected) curve over $K$. Using Proposition \ref{reflXxS} below, we get a similar factorization as in \eqref{Bunfact}, with $\R_{\E}$ in place of $\U_{\E}$. Namely, the functor
\begin{equation}\label{prXprS}
\vect_X \otimes_K \refl_S \longto \refl_{X\times_K S},\ E \otimes F \longmapsto \pr_X^*E \otimes_{\O_{X \times S}} \pr_S^*F,
\end{equation}
is right exact in both arguments, hence yields a functor $\vect_X \btimes_K \refl_S \longto \refl_{X\times_K S}$, which is fully faithful, because \eqref{prXprS} is. Hence the inclusion of $\vect_{X \times_K S}$ induces a map as claimed,
\[\M_{\vect_X} \isoto \Bun_X \xto{\ \pi\ } \R_{\vect_X}.\]
By \cite{Hart}, Corollary 1.4, if $S$ is regular of dimension $\leq 1$, then $\pi$ is an equivalence on $S$-points. On the other hand, consider $S = \Spec(K[[x,y]]/(xy))$, as in Remark \ref{nocoker}. Since $X$ is smooth, in particular $\pr_S$ is flat, and thus $\pr_S^*(x) \in \refl_{X \times_K S}$ is not locally free.

Similarly, we have a factorization (where $X$ can be any proper scheme over $K$)
\[\M_{\qcoh_X} \isoto \Coh_X \xto{\ \psi\ } \Q_{\qcoh_X}\]
by the proof of Proposition \ref{reflXxS}. Then $\psi$ is an equivalence on $S$-points if $S$ is reduced of dimension $0$.
\end{ex}

\begin{prop}\label{reflXxS}
Let $K$ be a field. Let $X$ be a smooth projective (geometrically connected) curve over $K$, and $S \in \Aff_K$. Then the functor \eqref{prXprS} induces an equivalence of categories
\begin{equation}\label{prXboxprS}
\vect_X \btimes_K \refl_S \isoto \refl_{X \times_K S}.
\end{equation}
\end{prop}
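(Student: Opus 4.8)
The plan is to prove the equivalence in two stages: first establish that the functor \eqref{prXprS} is fully faithful, which was already asserted in the preceding example, and then prove essential surjectivity, which is the substantive content. For full faithfulness of \eqref{prXprS}, one reduces to the case of line bundles on $X$ by writing an arbitrary $E \in \vect_X$ as a retract of a direct sum of line bundles (or using a resolution by such), and then computes $\Hom_{X \times S}(\pr_X^* E \otimes \pr_S^* F, \pr_X^* E' \otimes \pr_S^* F')$ via the projection formula and $\pr_{S*}\pr_S^* = \id$, reducing it to $\Hom_X(E, E') \otimes_K \Hom_S(F, F')$; here one uses that $X$ is a smooth projective curve over the field $K$, so that $\Hom_X(E,E')$ is a finite-dimensional $K$-vector space and base change behaves well. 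Since $\vect_X \btimes_K \refl_S$ is by construction a full subcategory of $\Fun_K(\vect_X^{\op} \otimes_K \refl_S^{\op}, \Mod_K)$ generated under finite colimits by the representables, full faithfulness of \eqref{prXboxprS} follows from that of \eqref{prXprS} together with right-exactness of both functors.

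For essential surjectivity, let $G \in \refl_{X \times_K S}$ with $S = \Spec(R)$. The key structural input is that, since $X$ is a smooth projective curve, $X \times_K S$ is regular in codimension $1$ along the fibres, so a reflexive sheaf on $X \times_K S$ is determined by its restriction to the complement of a closed subset of codimension $\geq 2$; more concretely, one wants to present $G$ as the bidual of a cokernel of a map between sheaves pulled back from $X$ and $S$. I would argue as follows: first note that $G$ is $S$-flat (reflexive sheaves on $X \times S$ are flat over $S$ when $X$ is a smooth curve, since fibrewise they are locally free away from finitely many points and torsion-free, hence flat), and cohomology and base change applies. Then, Zariski-locally on $S$, one can find a presentation $\pr_X^* E_1 \oplus \pr_S^* \O_S^{\oplus m} \to G \to 0$ by adapting the standard argument that coherent sheaves on $X \times S$ are quotients of $\pr_X^*(\O_X(-n)^{\oplus N}) \otimes \pr_S^*(\text{something})$ for $n \gg 0$ (using relative ampleness of $\O_X(1)$ and $\pr_{S*}$). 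Applying the kernel and taking biduals, $G$ appears as the bidual of a cokernel of a morphism in the image of $\vect_X \otimes_K \refl_S$, hence — since $\vect_X \btimes_K \refl_S$ is closed under such cokernels by right-exactness, and the bidual functor $\refl_{X\times S} \to \refl_{X \times S}$ matches the reflexive-hull idempotent completion built into Kelly's tensor product — $G$ lies in the essential image. One then checks the local presentations glue, using full faithfulness to descend.

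I expect the main obstacle to be the essential surjectivity argument, specifically controlling reflexive sheaves on $X \times_K S$ for a general (possibly non-reduced, non-Noetherian) affine $S$: the cited result \cite{Hart}, Corollary 1.4, only gives what we want when $S$ is regular of dimension $\leq 1$, and for general $S$ one must genuinely use that the tensor product $\vect_X \btimes_K \refl_S$ is built by idempotent/reflexive completion rather than expecting $G$ to be a naive cokernel. The delicate point is matching the ``$(-)^{\vee\vee}$'' operation used to make $\refl_S$ (and $\refl_{X \times S}$) finitely cocomplete with the formal finite-colimit completion in Kelly's $\btimes_K$; this is where one must invoke that $\pr_S$ is flat so that $\pr_S^*$ commutes with biduals on $S$ (as recorded in the displayed base-change identity for duals), together with the fact that on a smooth curve $X$ the bidual of a finitely presented sheaf on $X \times S$ is computed fibrewise in a controlled way. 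Once that compatibility is in hand, essential surjectivity reduces to the classical statement that every reflexive sheaf on $X \times S$ is the reflexive hull of a cokernel of locally free sheaves pulled back appropriately, which is standard for $X$ a smooth projective curve.
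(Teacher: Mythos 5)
Your strategy for essential surjectivity --- resolve $G$ globally by sheaves of the form $\pr_X^*E\otimes\pr_S^*F$, lift the presentation into $\vect_X\btimes_K\refl_S$ using full faithfulness on pure tensors, and recover $G$ as the reflexive hull of the cokernel --- is genuinely different from the paper's. The paper first reduces to noetherian $S$ by writing $S=\lim S_\lambda$ and invoking \cite{EGA}, Th\'eor\`eme 8.5.2, then applies Sch\"appi's theorem to get $\qcoh_X\btimes_K\qcoh_{S_\lambda}\isom\qcoh_{X\times_K S_\lambda}$, so that $Q\in\refl_{X\times_K S}$ is already exhibited as (a colimit of) pure tensors $\pr_X^*E\otimes\pr_S^*F$ with $E\in\qcoh_X$, $F\in\qcoh_S$; it then replaces $E$ by its torsion-free (hence locally free) quotient $E/E_{\tors}$ using that $Q$ is torsion-free, and replaces $F$ by $F^{\vee\vee}$ after noting that $\pr_S^*F$ is locally a direct summand of $Q$ and hence reflexive. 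Your route trades Sch\"appi's theorem for an explicit relative Serre-type presentation, which is a reasonable alternative in principle.

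However, two steps as written do not stand. First, the claim that reflexive sheaves on $X\times_K S$ are $S$-flat is false, and the example the paper records immediately before the proposition refutes it: for $S=\Spec(K[[x,y]]/(xy))$ the ideal $(x)\isom\O_S/(y)$ is reflexive but not flat (tensoring $(y)\into\O_S$ with $\O_S/(y)$ gives the zero map out of $(y)/(y^2)\neq 0$), and $\pr_S^*(x)$ is reflexive on $X\times_K S$; your justification conflates properties of the fibres with flatness over the base. Fortunately flatness is not needed for your construction: for noetherian $S$, coherence of $\pr_{S*}$ and relative global generation of $G\otimes\pr_X^*\O_X(n)$ for $n\gg0$ hold for any coherent $G$ on the projective morphism $\pr_S$, and since $S$ is affine this yields a \emph{global} two-term presentation by sheaves $\pr_X^*\O_X(-n)^{\oplus m}$, so no gluing of local presentations (hence no descent for $\btimes_K$) is required. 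Second, the non-noetherian case is flagged but not resolved; this is exactly where the paper's limit argument does real work, and without some such reduction the coherence and global-generation inputs you rely on are unavailable. (A smaller point: not every $E\in\vect_X$ is a retract of a sum of line bundles, so the full-faithfulness reduction should go through a two-term resolution instead.) With the flatness claim excised, the noetherian reduction supplied, and the identification of cokernels in $\refl_{X\times_K S}$ with biduals of cokernels in $\qcoh_{X\times_K S}$ made explicit, your argument can be completed.
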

\begin{proof}
Since $S$ is affine over $K$, we can write $S$ as $S = \lim S_{\lambda}$, with each $S_{\lambda} \in \Aff_K$ noetherian over $K$, and hence $X \times_K S = \lim(X \times_K S_{\lambda})$. The category $\qcoh_S$ is exhausted by coherent sheaves on the $S_{\lambda}$, in the sense that there is an equivalence of categories
\begin{equation}\label{qcohXxS}
\colim \qcoh_{X \times_K S_{\lambda}} \isoto \qcoh_{X \times_K S}
\end{equation}
by \cite{EGA}, Th\'eor\`eme 8.5.2 $(ii)$. Now, we can apply \cite{Sch}, Theorem 1.7, which implies that
\[\qcoh_X \btimes_K \qcoh_{S_{\lambda}} \isoto \qcoh_{X \times_K S_{\lambda}}\]
is an equivalence, for all $\lambda$. The universal property implies that this commutes with \eqref{qcohXxS}. Therefore, we may assume that $S$ is noetherian, and for $Q \in \refl_{X \times_K S}$, there exist $E_i\in\qcoh_X$ as well as $F_i\in\qcoh_S$ with
\[Q \isom \colim(\pr_X^*E_i \otimes \pr_S^*F_i) = \pr_X^*E \otimes \pr_S^*F, \mbox{ with } E = \colim E_i \mbox{ and } F = \colim F_i.\]
Note that a fortiori, the first $\colim$ is taken in $\refl_{(-)}$, whereas a priori, the other two are not. However, we can assume $E\in\vect_X$ by replacing it by its torsion-free quotient $E/E_{\tors}$, since by right exactness of the functors $\pr_X^*$ and $-\otimes\pr_S^*F$, the sequence
\[\pr_X^*(E_{\tors}) \otimes \pr_S^*F \xto{=0} Q \longto \pr_X^*(E/E_{\tors}) \otimes \pr_S^*F \longto 0\]
is exact, since $Q$ is torsion-free. Then $\pr_S^*F\in\refl_{X \times_K S}$ as well, because it is locally a direct summand of $Q$. Finally, $\pr_S^*F \isom (\pr_S^*F)^{\vee\vee} \isom \pr_S^*(F^{\vee\vee})$, and $F^{\vee\vee}\in\refl_S$. Thus \eqref{prXboxprS} is essentially surjective, and we had already seen that \eqref{prXprS} is fully faithful.
\end{proof}

\begin{rem}[cf. \cite{CG}, Definition 2.1]
Let $\E$ be a cocomplete $K$-linear category. The moduli stack of points of $\E$ is the \'etale stackification $\P_{\E}$ of the functor
\[\Aff_K \longto \Grpd,\ S \longmapsto \operatorname{Pt}_{\E}(S).\]
Note that unlike \cite{CG}, we do not consider the scheme version of $\P_{\E}$. Another possible construction of the moduli stack of objects of $\E$ (not necessarily cocomplete) is then as a substack of $\Q_{\E}$ defined by conditions analogous to $\P_{\E}$. However, while this might be the most accurate general definition, both sufficient functoriality as well as the analogue of Theorem \ref{NMUprop} do not seem easily accessible.
\end{rem}

\section{Unital \texorpdfstring{$2$}{2}-Segal objects and transfer theories}\label{sec7}

In this section, we explain how to extract genuine algebras from $2$-Segal objects, with a particular focus on geometric examples based on $\mathsection$\ref{sec6}. Let $\C$ be an $\infty$-category with finite limits.

\begin{defn}
A unital $2$-Segal object in $\C$ is a simplicial object $X$ such that the maps
\[\begin{gathered}
X_{\{0,\ldots,i,i+2,\ldots,n\}} \longto X_n \times_{X_{\{i,i+1\}}} X_{\{i\}} \\
X_n \longto X_{\{0,1,2\}} \times_{X_{\{0,2\}}} X_{\{0,2,3\}} \times_{\{0,3\}} \cdots \times_{X_{\{0,n-1\}}} X_{\{0,n-1,n\}} \\
X_n \longto X_{\{0,1,n\}} \times_{X_{\{1,n\}}} X_{\{1,2,n\}} \times_{X_{\{2,n\}}} \cdots \times_{X_{\{n-2,n\}}} X_{\{n-2,n-1,n\}}
\end{gathered}\]
are equivalences in $\C$, for every $n \geq 2$ and $0 \leq i < n$.
\end{defn}

\begin{ex}\label{Sdot2Segal}
Let $(\E, \trl{\E}, \trr{\E})$ be a proto-exact category (cf. \cite{higherS}, Definition 4.1). Then the Waldhausen construction $S(\E)$ is unital $2$-Segal (\cite{HSS1}, Example 2.5.4; \cite{GKT}, Theorem 10.10).
\end{ex}

\begin{ex}\label{hecke}
Let $G$ be a group and let $F$ be a $G$-set. By \cite{HSS1}, Proposition 2.6.3, the \v{C}ech nerve $N(G,F)$ of $[G \backslash F] \to \bee G$ is a Segal groupoid, and hence unital $2$-Segal (\textit{loc.cit.}, Proposition 2.5.3). Note that the functor
\[[G \backslash F^{n+1}] \to [G \backslash F] \times_{\bee G} [G \backslash F^n],\ (x,y) \mapsto (x,y,1),\]
is fully faithful, and for $x\in F,\ y\in F^n$ and $h \in G$, there is an isomorphism
\[(h,1)\!: (x,y,h) \isoto (hx,y,1).\]
Thus, by induction, $N(G,F) \isom [G \backslash F^{\bullet +1}]$, with the evident simplicial maps.

If $F$ is a scheme with the action of a group scheme $G$ over a commutative ring $K$, we can form the analogous simplicial stack $\N(G,F) \isom [G \backslash F^{\bullet +1}]$. Again, $\N(G,F)$ is Segal, by Lemma \ref{segalpts} below.
\end{ex}

\begin{lemma}\label{segalpts}
Let $K$ be a commutative ring, and $d\in\NN$. A simplicial stack $\X$ is lower, resp. upper, $d$-Segal if and only if for every $R\in\Aff_K^{\op}$, the groupoid of points $\X(R)$ is lower, resp. upper, $d$-Segal. Similarly, $\X$ is unital $2$-Segal if and only if all $\X(R)$ are unital $2$-Segal.
\end{lemma}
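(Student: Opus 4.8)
The statement to prove is Lemma \ref{segalpts}: a simplicial stack $\X$ over $K$ is lower (resp. upper) $d$-Segal, resp. unital $2$-Segal, if and only if every groupoid of points $\X(R)$ has the corresponding property. The plan is to reduce everything to the observation that the higher Segal conditions are expressed purely in terms of finite limits of the simplicial object, and that finite limits of stacks are computed pointwise.

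First I would recall that $\St K$, being a (2-)category of sheaves of groupoids on $\Aff_K^{\et}$, has all finite (2-)limits, and that these are computed objectwise: for a finite diagram $\mathcal{D} \to \St K$, one has $(\lim_{\mathcal D} \mathcal{F}_i)(R) \simeq \lim_{\mathcal D} (\mathcal{F}_i(R))$ naturally in $R \in \Aff_K^{\op}$. This is standard — limits in a functor (2-)category are pointwise, and sheafification does not affect finite limits since it is left exact — but I would state it explicitly as the key input. Next, I would unwind the definition of the (lower/upper) $d$-Segal and unital $2$-Segal conditions: in each case the condition asserts that certain canonically defined maps, of the form $X_n \to X_{\sigma_1} \times_{X_{\tau_1}} \cdots \times_{X_{\tau_{k-1}}} X_{\sigma_k}$ (and the unitality maps $X_{\{0,\dots,\hat{i+1},\dots,n\}} \to X_n \times_{X_{\{i,i+1\}}} X_{\{i\}}$), are equivalences. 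Each such map is built functorially from face maps of the simplicial object using only finite fibre products, so it is an equivalence in $\St K$ if and only if its evaluation at every $R \in \Aff_K^{\op}$ is an equivalence of groupoids — using that a morphism of stacks is an equivalence iff it is an equivalence on all points, together with the pointwise computation of the fibre products involved.

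Concretely, the argument is: fix $n \geq 2$ and the relevant index data; the map in question, call it $f_n \colon X_n \to P_n$, where $P_n$ is the pertinent iterated fibre product of lower-dimensional terms, satisfies $f_n(R) = (f_n)_R \colon \X_n(R) \to P_n(R)$ and $P_n(R)$ is the same iterated fibre product formed in $\Grpd$ from the groupoids $\X_m(R)$. Hence $f_n$ is an equivalence of stacks $\iff$ $f_n(R)$ is an equivalence of groupoids for all $R$ $\iff$ the corresponding Segal map for the groupoid $\X(R)$ is an equivalence. Quantifying over all $n$ and all admissible index data gives the three equivalences claimed (lower $d$-Segal, upper $d$-Segal, unital $2$-Segal) simultaneously. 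I would note the unital case requires, in addition, matching up the definition of unitality for $\X$ with that for each $\X(R)$, which again is immediate since the unitality maps are of the same finite-limit form.

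I do not anticipate a genuine obstacle here; the lemma is essentially a bookkeeping statement. The only point requiring a modicum of care is making sure that ``equivalence of stacks $\iff$ pointwise equivalence of groupoids'' is invoked correctly in the $2$-categorical/$\infty$-categorical setting (i.e.\ that we are testing on the big étale site and that this detects equivalences), and that the canonical Segal maps really are natural in $R$ — both of which are formal. Thus the proof will be short: cite pointwise computation of finite limits in $\St K$, observe that all Segal and unitality maps are assembled from finite limits and face maps, and conclude.
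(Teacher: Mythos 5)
Your proposal is correct and is exactly the paper's argument: the proof given there is the one-line observation that limits (of stacks) are computed pointwise, which is precisely the key input you identify and then elaborate. The additional details you supply (Segal/unitality maps are built from finite limits, equivalences of stacks are detected on $R$-points) are the routine unwinding the paper leaves implicit.
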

\begin{proof}
This is clear, because limits are computed pointwise.
\end{proof}

Let $\V$ be a symmetric monoidal category. We abbreviate the terminology of \cite{HSS1}, where the following is referred to as a $\V$-valued theory with transfer on $\C$ (\textit{op.cit.}, Definition 8.1.2).

\begin{defn}
Consider two classes of morphisms in $\C$, referred to as $\T$-smooth and $\T$-proper, respectively, closed under composition, and containing all equivalences. Moreover, if
\begin{equation}\label{smoothproper}
  \begin{tikzcd}
    \X' \ar[r, "{f'}"] \ar[d, "{g'}"'] & \Z' \ar[d, "{g}"] \\
    \X \ar[r, "{f}"] & \Z
  \end{tikzcd}
\end{equation}
is a fibre product diagram, where $f$ is $\T$-proper and $g$ is $\T$-smooth, then $f'$ is a $\T$-proper map and $g'$ is $\T$-smooth.

A transfer theory $\T\!: \C \dashto \V$ is an assignment on objects, which extends to a contravariant functor on $\T$-smooth morphisms $g \mapsto g^*$, and a covariant functor $f \mapsto f_*$ on $\T$-proper morphisms. They are called pullback and pushforward, respectively, and satisfy the base change property
\[(f')_*(g')^* = g^*f_*\]
for every diagram as in \eqref{smoothproper}. Finally, for $\Z,\Z'\in\C$, there is an external product
\[\btimes\!: \T(\Z) \otimes \T(\Z') \longto \T(\Z \times \Z'),\]
natural with respect to (pullback along) $\T$-smooth maps, as well as an isomorphism
\[\T(\pt) \isoto 1_{\V},\]
which together satisfy the evident associativity and unitality constraints.
\end{defn}

\begin{defn}
A unital $2$-Segal object of $\C$ is called $\T$-transferable if both of the correspondences
\begin{equation}\label{SdotCorr}
  \begin{tikzcd}
    \Z_2 \ar[d, "{(\del_2,\del_0)}"'] \ar[r, "{\del_1}"] & \Z_1 & \phantom{i} \ar[d, phantom, "{\mbox{and}}"] & \Z_0 \ar[r, "{\sigma_0}"] \ar[d] & \Z_1 \\
    \Z_1 \times \Z_1 & & \phantom{i} & \pt & 
  \end{tikzcd}
\end{equation}
are $\T$-(smooth, proper).
\end{defn}

\begin{prop}[cf. \cite{HSS1}, Proposition 8.1.7]\label{HA}
Let $\T\!: \C \dashto \V$ be a transfer theory, and let $\Z$ be a $\T$-transferable unital $2$-Segal object of $\C$. Then $\H(\Z,\T) := \T(\Z_1)$, endowed with the multiplication map
\[\T(\Z_1) \otimes \T(\Z_1) \xto{\ \boxtimes\ } \T(\Z_1\times\Z_1) \xto{(\del_2,\del_0)^*} \T(\Z_2) \xto{(\del_1)_*} \T(\Z_1),\]
is a unital $\T(\pt)$-algebra in $\V$, which is called the Hall algebra of $\Z$ along $\T$.
\end{prop}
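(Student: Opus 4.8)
The plan is to verify the three algebra axioms—associativity, left unitality, and right unitality—by reducing each to a statement about the $2$-Segal object $\Z$ via the base change property of $\T$, following the template of \cite{HSS1}, Proposition 8.1.7.

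First I would set up the multiplication more carefully: since the correspondence $\Z_1 \times \Z_1 \xleftarrow{(\del_2,\del_0)} \Z_2 \xrightarrow{\del_1} \Z_1$ is $\T$-(smooth, proper) by the transferability hypothesis, the composite $(\del_1)_* \circ (\del_2,\del_0)^* \circ \btimes$ makes sense, and one checks naturality of $\btimes$ with respect to the relevant $\T$-smooth maps is exactly what is needed for the diagram chases below. For \emph{associativity}, the standard argument is to express both $(a*b)*c$ and $a*(b*c)$ as transfers along the two correspondences
\[
\Z_1^{\times 3} \longleftarrow \Z_2 \times_{\Z_1} \Z_2 \longrightarrow \Z_1,
\]
obtained by the two ways of iterating the span, and then identify both iterated fibre products with $\Z_3$ using the $2$-Segal maps $X_3 \to X_{\{0,1,2\}} \times_{X_{\{0,2\}}} X_{\{0,2,3\}}$ and $X_3 \to X_{\{0,1,3\}} \times_{X_{\{1,3\}}} X_{\{1,2,3\}}$ from the definition of a unital $2$-Segal object. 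Once both sides are rewritten as $(\del_1\del_2)_* \circ (\del_3,\del_1\del_1,\del_0\del_0)^* \circ \btimes^{(2)}$ evaluated on $\Z_3$, they visibly agree; the only thing to check is that assembling the transfers matches, which is where the base change property $(f')_*(g')^* = g^* f_*$ is applied repeatedly to commute pushforwards past pullbacks across the squares produced by the $2$-Segal decompositions. One must also confirm that all the structure maps of $\Z_3$ that appear are $\T$-smooth or $\T$-proper as required; this follows because $\T$-smooth and $\T$-proper maps are closed under composition and base change, and the faces $\del_i\colon \Z_2 \to \Z_1$ are of the appropriate type by transferability, with the higher faces obtained as pullbacks of these along the $2$-Segal equivalences.

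For \emph{unitality}, the unit is defined as the image of $1_\V \isoto \T(\pt) \xrightarrow{(\sigma_0)_* \circ (\text{the map } \pt \leftarrow \Z_0)^*}$, i.e. the class $u := (\sigma_0)_* (s)$ where $s\in\T(\Z_0)$ is pulled back from $\T(\pt) \cong 1_\V$ along $\Z_0 \to \pt$; here transferability of the second correspondence in \eqref{SdotCorr} is exactly what guarantees $\Z_0 \to \pt$ is $\T$-smooth and $\sigma_0\colon \Z_0 \to \Z_1$ is $\T$-proper. Left unitality $u * a = a$ is then proven by identifying $\Z_2 \times_{\Z_1 \times \Z_1} (\Z_0 \times \Z_1)$—the fibre product computing $u*a$—with $\Z_1$ via the unital $2$-Segal map $X_{\{0,2\}} \to X_2 \times_{X_{\{1,2\}}} X_{\{1\}}$ (taking $n=2$, $i=0$) together with a degeneracy, so that the composite transfer collapses to the identity; right unitality is symmetric, using $i=1$. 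Again the engine is the base change property, now applied to the pullback squares exhibiting these unitality equivalences.

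The main obstacle I anticipate is purely bookkeeping rather than conceptual: one must be scrupulous that every map appearing in the iterated correspondences is certified $\T$-smooth or $\T$-proper—this is not automatic and requires tracing through how the classes are closed under composition and stable base change along the specific squares furnished by the $2$-Segal identities—and that the various instances of the base change property are applied to genuine fibre-product squares (which is where the $2$-Segal equivalences are indispensable, since \emph{a priori} the relevant diagrams are only Segal-type fibre products after invoking those equivalences). A secondary point needing care is the compatibility of the external product $\btimes$ with pullback: the associativity and unitality of $\btimes$ built into the definition of a transfer theory must be invoked to reconcile $\btimes^{(2)}\colon \T(\Z_1)^{\otimes 3} \to \T(\Z_1^{\times 3})$ with the two ways of building it from the binary $\btimes$, but this is exactly the coherence data postulated, so it causes no real difficulty. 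Since \cite{HSS1}, Proposition 8.1.7 records this in the setting there, the proof here amounts to observing that nothing in that argument used more than the axioms of a transfer theory and a $\T$-transferable unital $2$-Segal object, both of which we have assumed.
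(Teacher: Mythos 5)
Your proposal is correct and follows the same route as the paper, which itself offers no independent proof but defers entirely to \cite{HSS1}, Proposition 8.1.7: associativity via the two $2$-Segal identifications of the iterated spans with $\Z_3$, unitality via the unital $2$-Segal conditions for the degenerate correspondence, and base change plus the coherence of $\btimes$ to assemble the transfers. (Only a cosmetic quibble: for left unitality with the unit in the $\del_2$-slot the relevant condition is $X_{\{0,2\}} \isoto X_2 \times_{X_{\{0,1\}}} X_{\{0\}}$, i.e.\ the $i=0$ case with fibre product over $X_{\{0,1\}}$ rather than $X_{\{1,2\}}$ as written.)
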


\begin{ex}\label{RHA}
Let $\E$ be a proto-exact category, and suppose that $\#\Ext_{\E}^i(A,B) < \infty$ for $i=0,1$, and for all $A,B\in\E$. The Ringel-Hall algebra of $\E$ is the convolution algebra
\[\Hall(\E) := \H(S(\E)^{\simeq},\QQ[\pi_0(-)])\]
of finitely supported functions on $\pi_0(\E^{\simeq})$ with values in $\QQ$. The product of $\phi,\psi\in\QQ[\pi_0(\E^{\simeq})]$ is defined by
\[(\phi * \psi)(E) = \sum_{B \leq E} \phi(B)\psi(E/B).\]
Here, $\T = \QQ[\pi_0(-)]$ is a transfer theory on $\Grpd$, where a morphism $f$ is $\T$-smooth if $\pi_0(f)$ has finite fibres, and $\T$-proper if the induced map on automorphism groups has finite kernel and cokernel. The external product is given by pointwise multiplication, the pullback is via post-composition, and pushforward via integration over the fibres, that is,
\[\int_{Z} \phi := \sum\limits_{z\in\pi_0(Z)} \frac{\phi(z)}{\#\Aut(z)}, \mbox{ for } \phi\in\QQ[\pi_0(Z)].\]

Applying the transfer theory $\QQ[\pi_0(-)]$ to $N(G,G/K)$ instead (cf. Example \ref{hecke}), where $K \subseteq G$ is an almost normal subgroup, we obtain the double coset Hecke algebra of $(G,K)$.
\end{ex}

\begin{defn}
Let $\Z\in\St K$, and let $\St \Z$ be the category of stacks fibered over $\Z$. Denote by $\Sta \Z$ the full subcategory of $\St \Z$ formed by algebraic stacks of finite presentation and with affine geometric stabilizer groups over $K$.

The Grothendieck group of stacks $K_0(\Sta \Z)$ is defined as the free $\ZZ$-module on geometric equivalence classes of objects of $\Sta \Z$, modulo the following relations.
\begin{equation}\label{K0StRel}
  \begin{gathered}
    [\X \disj \X'] = [\X] + [\X'], \\
    [\X_1] = [\X_2] \mbox{ for all } \X_i \to \X \mbox{ representable by } \\
    \mbox{Zariski locally trivial fibrations of schemes with the same fibres}.
  \end{gathered}
\end{equation}
The maps appearing in the second relation are not to be confused with the (Grothendieck) fibrations we usually consider. We endow $K_0(\Sta K)$ with its ring structure, defined via the fibre product over $K$. As usual, we write $\Sta K$ instead of $\Sta \Spec(K)$.
\end{defn}

\begin{ex}\label{motTr}
A morphism $g\!: \Z' \to \Z$ in $\St K$ is $K_0(\Sta -)$-smooth if $\X \times_{\Z} \Z'$ lies in $\Sta \Z'$ for every $\X\in\Sta \Z$, whereas all morphisms are $K_0(\Sta -)$-proper. Then the pullback $g^*$ is via fibre product, and the pushforward is by pre-composition. The external product is induced by the fibre product over $K$.
\end{ex}

In particular, if $\Z,\Z'\in\St K$ have affine geometric stabilizers, then a morphism $\Z \to \Z'$ of finite presentation is $K_0(\Sta -)$-smooth.

From now on, we assume that the category $\E$ is $K$-linear and that $\bigoplus_{i \in \NN} \Ext^i(A,B) \in\proj_K$ for all $A,B\in\E$, so that in particular, $\E$ is locally proper.

\begin{defn}
We denote by $\S(\E)$ the simplicial stack over $K$ whose $n$-cells are given by the moduli stack of objects in $S_n(\E)$.
\end{defn}

By the above, this ensures that $(\del_2,\del_0)\!: \S_2(\E) \to \S_1(\E) \times_K \S_1(\E)$ is $K_0(\Sta -)$-smooth, by Theorem \ref{NMUprop}, and because the stabilizer group of every $A \in \M_{\E}$ is affine as a closed subscheme of $\Spec(\Sym(\End(A)^{\vee}))^{\times 2}$. Thus, $\S(\E)$ is $K_0(\Sta -)$-transferable.

\begin{defn}\label{motHA}
The resulting $K_0(\Sta K)$-algebra
\[\H(\E) := \H(\S(\E),K_0(\Sta -))\]
is called the motivic Hall algebra of $\E$.
\end{defn}

\begin{ex}
A smaller version of $\H(\E)$ arises by way of a transfer theory $K_0^{\rep}(\Sta -)$ on $\St K$. Namely, $K_0^{\rep}(\Sta \Z)$ is the subgroup of $K_0(\Sta \Z)$ generated by all $[\X \xto{\sigma} \Z]$ such that $\sigma$ is representable. Consequently, $K_0^{\rep}(\Sta -)$-proper maps have to be adjusted to consist of the representable morphisms, while everything else remains the same. Let us write $K_0^{\rep}(\Sta K) =: K_0(\Var K)$.

Since $\del_1$ is faithful, Proposition \ref{HA} applies, and we obtain a $K_0(\Var K)$-algebra
\[\H^{\rep}(\E) := \H(\S(\E),K_0^{\rep}(\Sta -)).\]
However, it is insufficient for our purposes, as $K_0^{\rep}(\Sta -)$ does not satisfy the hypotheses of Theorem \ref{intthm}, the map $\oint$ not being representable.

In \cite{Joy2}, Definition 2.8, the analogues of $K_0^{\rep}(\Sta -) \subseteq K_0(\Sta -)$ are called $\operatorname{SF} \subseteq \ubar{\operatorname{SF}}$.
\end{ex}

\begin{rem}
The notation $K_0(\Var K)$ is justified because it is generated by classes of integral affine schemes of finite presentation over $K$. In short, this follows from the existence of the schematic locus and the fact that $X_{\operatorname{red}} \to X$ as well as $U \disj (X \minus U) \to X$ are geometric equivalences, if $U \subseteq X$ is an open subscheme (cf. \cite{Br}, Lemma 2.12).

In particular, if $K$ is a field, this is the usual Grothendieck ring of varieties over $K$. The third relation in \eqref{K0StRel} is redundant here (cf. \textit{loc.cit.}, Lemma 2.5, which is valid over arbitrary fields). If $\cha K = 0$, by \textit{loc.cit.}, Lemma 2.9, we can equivalently consider the free $\ZZ$-module on \textit{isomorphism} classes of varieties over $K$, subject to the single relation
\[[X] = [Z] + [X \minus Z] \mbox{ for every closed subvariety } Z \subseteq X.\]
For any field $K$, any $\X\in\Sta K$ is geometrically equivalent to a finite disjoint union of quotient stacks of the form $[X/\GL_N]$ with $X$ an algebraic space over $K$ by \cite{Kr}, Proposition 3.5.9 and Proposition 3.5.6. But $[X] = [\GL_N][X/\GL_N]$, and
\[[\GL_N] = (\LL^N-1)\cdots(\LL^N-\LL^{N-1}) = \prod_{n=1}^N \LL^{N-n}(\LL^n-1) = \LL^{\frac{N(N-1)}{2}}\prod_{n=1}^N (\LL^n-1),\]
where $\LL := [\AA^1_K]$ denotes the Lefschetz motivic class. This implies that the map of rings
\begin{equation}\label{K0VarK0St}
K_0(\Var K)[\LL^{-1},(\LL^n-1)^{-1} \mid n\in\NN] \isoto K_0(\Sta K)
\end{equation}
is an isomorphism (also see \cite{Br}, Lemma 3.9, and \cite{Toen}, Th\'eor\`eme 3.10 \textit{ff}). Note that it is shown in \cite{Bori} that $\LL$ is a zero-divisor, at least if $\cha K = 0$.
\end{rem}

Motivic classes are interesting because they are the universal motivic invariants. Namely, a (multiplicative) motivic measure with values in an abelian group (resp. ring) $Q$ is defined to be a morphism $\mu\!: K_0(\Var K) \to Q$.

\begin{ex}\label{motMeasEX}
\begin{enumerate}[label=$(\arabic*)$]
\item Suppose $K$ is a finite field. The motivic counting measure is the map
\[\mu_{\#}\!: K_0(\Var K) \longto \ZZ,\ [X] \longmapsto \#X(K).\]
Moreover, this extends to the Grothendieck ring of stacks as
\[\mu_{\#}\!: K_0(\Sta K) \longto \QQ,\ [\X] \longmapsto \#\X(K),\]
that is, it commutes with \eqref{K0VarK0St}. For $L|K$ finite, we also consider $\mu_{\#} \otimes_K L\!: [\X] \mapsto \#\X(L)$.

\item\label{measureb} Let $G_K = \Gal(K^{\sep}|K)$, and $\l \neq \cha K$ prime. The virtual Euler-Poincar\'e characteristic
\[\chi_c\!: K_0(\Var K) \longto K_0(\rep_{\Qlbar}\!(G_K)),\ X \longmapsto \sum_{i\in\ZZ} (-1)^iH_c^i(X,\Qlbar),\]
is a motivic measure. Here, $H_c^i(X,\Qlbar)$ denotes the $\l$-adic \'etale cohomology of $X \otimes_K \bar K$ with compact support. Composing $\chi_c$ with $K_0(\rep_{\Qlbar}\!(G_K)) \xto{\dim} K_0(\vect_{\Qlbar}) \cong \ZZ$ yields the usual Euler characteristic (if $K=\CC$, with respect to Betti cohomology).

Using \eqref{K0VarK0St}, we can use this to obtain a map on the Grothendieck ring of stacks,
\[\chi_c\!: K_0(\Sta K) \longto K_0(\rep_{\Qlbar}\!(G_K))[(t^{-n}-1)^{-1} \mid n\in\NN],\]
where $t^{-1} := \Qlbar(-1) = (\lim \mu_{\l^m}(K^{\sep}) \otimes_{\ZZ_{\l}} \Qlbar)^{\vee}$ denotes the inverse Tate twist.

\item The name motivic measure comes from the following instance. Assume that $K$ has resolutions of singularities, for example $\cha K = 0$. Denote by $\CHMeff K \subseteq \CHM K$ the category of (effective) Chow motives over $K$. Then the functor
\[h\!: \SmVar K \longto \CHMeff K,\ X \longmapsto (X,\Delta_X),\]
descends to a morphism on the corresponding Grothendieck rings
\begin{equation}\label{chowMot}
h\!: K_0(\Var K) \longto K_0(\CHMeff K).
\end{equation}
As in \ref{measureb}, we moreover extend this to stacks as
\[h\!: K_0(\Sta K) \longto K_0(\CHM K)[([L]^n-1)^{-1} \mid n\in\NN],\]
where $L$ is the Lefschetz motive. Similarly, for arbitrary $K$, we obtain a measure
\[K_0(\Var K) \longto K_0(\operatorname{DM_c}(K))\]
with values in the Grothendieck ring of Voevodsky's category.

\item Suppose $K = \CC$. Then Hodge-Deligne polynomials are another example of a motivic measure. Moreover, Larsen-Lunts \cite{LL} showed that $K_0(\Var\CC)/(\LL) \isom \ZZ[\operatorname{SB}]$ is the monoid ring on stable birational equivalence classes.

\item The Albanese variety yields a motivic measure valued in the semigroup ring of isogeny classes of abelian varieties; this and further examples are listed in \cite{MotMeas}.

\item A more exotic motivic measure has been introduced in \cite{AMotMeas}, where the value ring is given by the connected components of Waldhausen's algebraic $K$-theory of spaces.

\item Assume that $K$ has resolutions of singularities. Then we obtain a motivic measure with values in the Grothendieck ring of smooth proper pretriangulated dg-categories as in \cite{BLL}, which by \cite{Tab} agrees with To\"en's secondary $K$-theory \cite{dg} and carries a non-commutative analogue of \eqref{chowMot}, $K_0(\Var K) \xto{\Perf} K^{(2)}_0(K) \xto{U(-)} K_0(\operatorname{NChow}(K))$.
\end{enumerate}
\end{ex}

\begin{ex}
We will make use of another variant of the above, the analytic motivic Hall algebra over a non-archimedean field $K$. It is given by the transfer $\T = K_0(\AnSta (-)^{\an})$ as defined below, where $(-)^{\an}\!: \St K \to \AnSt K$ denotes the analytification functor from \cite{Ul}.

Here, we work with Berkovich spaces in order to conform with \textit{op.cit.} as well as \cite{DOR}. If $K$ is non-trivially valued, an alternative approach based on \cite{PY} and \cite{RZ} within the framework of rigid-analytic geometry should also be possible, and similarly for Huber's theory of adic spaces.
\end{ex}

\begin{defn}
For a stack $\Z \in \AnSt K$, the abelian group $K_0(\AnSta \Z)$ is defined as follows. Let $\AnSta \Z$ be the full subcategory of $\AnSt \Z$ on all compact analytic stacks over $K$ whose geometric stabilizers are closed subgroups of $\GL_N^{\an}$ for some $N \gg 0$.

A morphism $\X_0 \to \X$ in $\AnSta \Z$ is called a geometric equivalence if, for every algebraically closed non-archimedean field $C|K$, the map $\X_0(C) \to \X(C)$ is an equivalence.

The Grothendieck ring of analytic stacks $K_0(\AnSta \Z)$ is the free $\ZZ$-module on geometric equivalence classes of objects of $\AnSta \Z$, modulo the relations \eqref{K0StRel}, where instead of Zariski locally trivial fibrations of course we consider the analytic topology.
\end{defn}

\begin{rem}
The analytification functor induces a map
\[(-)^{\an}\!: K_0(\Sta K) \isom K_0(\Var K)[[\GL_N]^{-1} \mid N \in \NN] \longto K_0(\AnSta K),\]
which assigns to $[\X] \in K_0(\Sta K)$ with $\X = [X/\GL_N]$ with $X$ proper over $K$ the class
\[[\X]^{\an} := [[X/\GL_N]^{\an}] = [X^{\an}/\GL_N^{\an}] \in K_0(\AnSta K).\]
This is well-defined since $|\X^{\an}| = |X^{\an}|/|\GL_N^{\an}|$ is compact since $X$ is proper.
\end{rem}

We denote the $K_0(\AnSta K)$-algebra we obtain from this by $\H^{\an}(\E) = K_0(\AnSta \M_{\E}^{\an})$.

\begin{rem}
As in the algebraic setting, it should be true that there is an isomorphism
\[K_0(\AnVar K)[[\GL_N^{\an}]^{-1} \mid N\in\NN] \isoto K_0(\AnSta K),\]
where $K_0(\AnVar K) := K_0^{\rep}(\AnSta K)$ is defined similarly to its algebraic version. Indeed, the same proof should work, based on \cite{Kr}, with the main subtlety given by the less well-behaved notion of generic flatness in the analytic world, as explained in \cite{Ducros}, $\mathsection$10.3.
\end{rem}

\begin{defn}
Let $\T\!: \C \dashto \V$ and $\T'\!: \C' \dashto \V'$ be transfer theories. A morphism $\Phi\!: \T \to \T'$ consists of a right adjoint $\C \to \C'$ sending $\T$-smooth, resp. $\T$-proper, maps to $\T'$-smooth, resp. $\T'$-proper, morphisms, as well as a symmetric monoidal functor $\V \to \V'$, together with a transformation between the two compositions
\[\begin{tikzcd}
    \C \ar[r, "{\Phi}"] \ar[d, shift right, "{\T}"'] \ar[d, shift left] & \C' \ar[d, shift right] \ar[d, shift left, "{\T'}"] \\
	\V \ar[r, "{\Phi}"'] \ar[ur, Rightarrow, shorten <= 7pt, shorten >= 7pt] & \V',
\end{tikzcd}\]
which is natural for both pullback and pushforward (as indicated by the double arrows).
\end{defn}

\begin{lemma}\label{countMeas}
Let $K$ be a finite field. The motivic counting measure induces an epimorphism of transfer theories
\[\bbmu_{\#}\!: K_0(\Sta -) \longto \QQ[\pi_0(-)],\]
with respect to the functor $\St K \to \Grpd,\ \Z \mapsto \Z(K)$, with the adjustment that $f\!: \Z \to \Z'$ is only $K_0(\Sta -)$-proper if it is $K_0(\Sta -)$-smooth. Namely, the maps
\begin{equation}\label{countMeasEpi}
\bbmu_{\#}\!: {}_{\ZZ}K_0(\Sta \Z)_{\QQ} \longto \QQ[\pi_0(\Z(K))],\ [\X] \longmapsto (z \mapsto \mu_{\#}(\X \times_{\Z,z} \Spec(K))),
\end{equation}
are $\QQ$-vector space epimorphisms, where ${}_{\ZZ}(-)$ and $(-)_{\QQ}$ denote restriction and extension of scalars, respectively. In fact, \eqref{countMeasEpi} is already surjective when restricted to ${}_{\ZZ}K_0(\Sta \Z)$.
\end{lemma}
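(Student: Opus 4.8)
The plan is to check, in this order, that \eqref{countMeasEpi} is well defined and lands in $\QQ[\pi_0(\Z(K))]$; that the resulting maps assemble into a morphism of transfer theories; and that each of them is surjective already on the integral subgroup ${}_{\ZZ}K_0(\Sta\Z)$. The last point carries the content.

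\emph{Well-definedness.} Since $K$ is finite, any $\X\in\Sta\Z$ has only finitely many isomorphism classes of $K$-points, each with finite automorphism group, so $z\mapsto\mu_\#(\X\times_{\Z,z}\Spec K)$ has finite support. For fixed $z$, the rule $[\X]\mapsto[\X\times_{\Z,z}\Spec K]$ is pullback along the $K_0(\Sta-)$-smooth morphism $z\colon\Spec K\to\Z$, hence a homomorphism $K_0(\Sta\Z)\to K_0(\Sta K)$; post-composing with $\mu_\#$, which is well defined on $K_0(\Sta K)$ by Example~\ref{motMeasEX}, shows that \eqref{countMeasEpi} respects the relations \eqref{K0StRel}.

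\emph{Morphism of transfer theories.} Everything here reduces to base change for point counting. The functor $\Z\mapsto\Z(K)$ is a right adjoint, so preserves finite limits; it sends $K_0(\Sta-)$-smooth $g\colon\Z'\to\Z$ to $\QQ[\pi_0(-)]$-smooth maps — the fibre of $\pi_0(g(K))$ over $z$ is a quotient of $\pi_0\big((\Spec K\times_{\Z,z}\Z')(K)\big)$, which is finite because $\Spec K\times_{\Z,z}\Z'\in\Sta\Z'$ is of finite presentation over $K$ — and, with the stated adjustment, $K_0(\Sta-)$-proper maps to $\QQ[\pi_0(-)]$-proper ones (automatic over a finite field). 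Compatibility with external product and unit is multiplicativity of $\mu_\#$; compatibility with pullback is the identity $(\X\times_\Z\Z')\times_{\Z',z'}\Spec K\cong\X\times_{\Z,g(z')}\Spec K$; and compatibility with pushforward is the identity $\X\times_{\Z',z'}\Spec K\cong\X\times_\Z\big(\Z\times_{\Z',z'}\Spec K\big)$, which turns $\mu_\#$ of the left-hand side into $\int_{f(K)^{-1}(z')}\bbmu_\#([\X])=\big(f(K)_{*}\bbmu_\#([\X])\big)(z')$.

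\emph{Surjectivity.} It suffices to produce, for each $z\in\pi_0(\Z(K))$, a class in ${}_{\ZZ}K_0(\Sta\Z)$ with image the basis vector $\delta_z$. First I compute $\bbmu_\#\big([z\colon\Spec K\to\Z]\big)=n\,\delta_z$ with $n:=\#\Aut_\Z(z)(K)$: for $z'\in\pi_0(\Z(K))$ the value is $\mu_\#$ of the fibre product $\operatorname{Isom}_\Z(z,z'):=\Spec K\times_\Z\Spec K$ of $z$ and $z'$, which is the group scheme $\Aut_\Z(z)$ if $z'=z$ (so $\mu_\#=n$) and has no $K$-rational point if $z'\neq z$ in $\pi_0(\Z(K))$ — such a point would be a $K$-isomorphism $z\cong z'$ — so $\mu_\#=0$ there. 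It then remains to divide by $n$ integrally: embedding $\Aut_\Z(z)$ as a closed subgroup of some $\GL_N$ over $K$, Lagrange's theorem gives $n\mid M:=\#\GL_N(K)$, and the same fibre-product computation, using $[\bee{\GL_N}]=[\GL_N]^{-1}$ in $K_0(\Sta K)$ and $\mu_\#(\bee{\GL_N})=M^{-1}$, yields $\bbmu_\#\big([\bee{\GL_N}\xto{z}\Z]\big)=(n/M)\,\delta_z$, where $[\bee{\GL_N}\xto{z}\Z]$ denotes $\bee{\GL_N}$ with structure map factoring through $z\colon\Spec K\to\Z$. Hence $(M/n)\,[\bee{\GL_N}\xto{z}\Z]\in{}_{\ZZ}K_0(\Sta\Z)$ maps to $\delta_z$; as the $\delta_z$ span the target, \eqref{countMeasEpi} is surjective on ${}_{\ZZ}K_0(\Sta\Z)$, a fortiori a $\QQ$-vector space epimorphism.

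The step I expect to require the most care is the vanishing $\mu_\#\big(\operatorname{Isom}_\Z(z,z')\big)=0$ for $z\neq z'$: a priori $\operatorname{Isom}_\Z(z,z')$ may be a non-trivial $\Aut_\Z(z)$-torsor — precisely when $z$ and $z'$ become isomorphic over $\bar K$ without being isomorphic over $K$ — and it is only the absence of $K$-rational points that forces its point count to vanish. This is exactly what makes the explicit class above single out $\delta_z$ rather than a sum over a geometric equivalence class, and it explains why the naive candidate $[\bee{\Aut_\Z(z)}\to\Z]$ does not work: over a finite field $\mu_\#(\bee{\Aut_\Z(z)})$ need not equal $1/n$ (already $\mu_\#(\bee{(\ZZ/n)})=1$).
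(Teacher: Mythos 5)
Your proof is correct, and in the surjectivity step it takes a genuinely different route from the paper -- one that in fact repairs the paper's own argument. The paper asserts that $[\Spec(K)\xto{z}\Z]$ maps to $\mathbbm 1_z$ and that $\tfrac1n\mathbbm 1_z$ has integral preimage $[\bee\ubar{\ZZ/n}\to\Spec(K)\xto{z}\Z]$. Both assertions are off: as you compute, $\Spec(K)\times_{\Z,z}\Spec(K)$ is the Isom-sheaf, so $[\Spec(K)\xto{z}\Z]$ maps to $\#\Aut_{\Z(K)}(z)\cdot\mathbbm 1_z$; and over a finite field the mass formula gives $\mu_{\#}(\bee\ubar{\ZZ/n})=1$ (there are $n$ classes of torsors, each with automorphism group $\ZZ/n$), not $1/n$, so twisting by $\bee\ubar{\ZZ/n}$ changes nothing. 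Your observation that $\mu_{\#}(\bee\ubar{\ZZ/n})=1$ is exactly the obstruction, and your replacement -- dividing by $\#\Aut_{\Z(K)}(z)$ via $\bee\GL_N$, where Lang's theorem gives $\mu_{\#}(\bee\GL_N)=\#\GL_N(K)^{-1}$ and Lagrange supplies the integrality $\#\Aut_{\Z(K)}(z)\mid\#\GL_N(K)$ -- is a correct and clean fix that still proves the stronger integral surjectivity claimed in the lemma. The care you take with $\mu_{\#}(\operatorname{Isom}_{\Z}(z,z'))=0$ for $z\neq z'$ in $\pi_0(\Z(K))$ (the Isom-scheme may be a nontrivial form, but has no $K$-points) is also a point the paper glosses over. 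The transfer-theory compatibilities are argued essentially as in the paper (right-adjointness of $\Z\mapsto\Z(K)$, functoriality of pushforward/base change), just spelled out in more detail. The only caveats are the implicit finiteness hypotheses you both rely on -- that $\Aut_{\Z(K)}(z)$ is a finite group and admits a closed embedding into some $\GL_N$ -- which hold for the algebraic stacks with affine stabilizers to which the lemma is actually applied, and which the paper leaves equally implicit.
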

\begin{proof}
First of all, $\Z \mapsto \Z(K)$ is a right adjoint functor, since mapping a groupoid $Z$ to the constant stack $\ubar Z$ is a left adjoint, and fibrations of stacks are defined pointwise.

Naturality with respect to pullback is clear. For $K_0(\Sta -)$-proper $f\!: \Z \to \Z'$, the diagram
\[\begin{tikzcd}
    K_0(\Sta \Z) \ar[r, "{f_*}"] \ar[d] \mathclap{\smash{\mbox{\hspace{-5.7em}\raisebox{-3.7ex}{\scalebox{0.7}{\ensuremath{\bbmu_{\#}}}}}}} & K_0(\Sta \Z') \ar[d] \mathclap{\smash{\mbox{\hspace{-3.1em}\raisebox{-3.7ex}{\scalebox{0.7}{\ensuremath{\bbmu_{\#}}}}}}} \\
	\QQ[\pi_0(\Z(K))] \ar[r, "{f(K)_*}"] & \QQ[\pi_0(\Z'(K))]
\end{tikzcd}\]
commutes, since for each $[\X \xto{\sigma} \Z] \in K_0(\Sta \Z)$, we can use functoriality of the pushforward,
\[f(K)_*(\bbmu_{\#}([\X])) = f(K)_*(\sigma(K)_*\mathbbm 1_{\X(K)}) = (f(K) \circ \sigma(K))_*\mathbbm 1_{\X(K)} = \bbmu_{\#}(f_*[\X]).\]

For surjectivity, the element $[\Spec(K) \xto{z} \Z] \in {}_{\ZZ}K_0(\Sta \Z)_{\QQ}$ maps to $\mathbbm 1_z \in \QQ[\pi_0(\Z(K))]$. It suffices to restrict to the integral part ${}_{\ZZ}K_0(\Sta \Z)$, since more generally, for all $n\in\NN$, the function $\frac 1n \cdot \mathbbm 1_z$ has preimage $[\bee\ubar{\ZZ/n} \to \Spec(K) \xto{z} \Z] \in {}_{\ZZ}K_0(\Sta \Z)$.
\end{proof}

\begin{rem}\label{mutimesL}
The adjustment of $K_0(\Sta -)$-proper maps in Lemma \ref{countMeas} is so that $\bbmu_{\#}$ respects them. It does not affect the transfer theory properties of $K_0(\Sta -)$, nor the existence of the Hall algebra, since $\del_1$ is $K_0(\Sta -)$-smooth. Alternatively, we could also simply pull back the $\QQ[\pi_0(-)]$-proper maps to $\St K$.

Let $L|K$ be finite. Then we can replace $\mu_{\#}$ by
\[\mu_{\#} \otimes_K L = \mu_{\#} \circ (- \times_{\Spec(K)} \Spec(L)),\]
to obtain another morphism of transfer theories. Indeed, the base change functor $\Sta K \to \Sta L$ on the right-hand side is right adjoint to the forgetful functor.
\end{rem}

\begin{ex}\label{HallToHall}
In the situation of Example \ref{RHA}, consider the transfer theory
\[\Fun^f(-,\vect_{\CC})\!: \Grpd \dashto \AbCat\]
from \cite{HSS1}, $\mathsection$8.3.C, and \cite{Toby}, $\mathsection$2.5. Here, $\Fun^f$ denotes finitely supported (meaning, on $\pi_0(-)$) functors, and $\AbCat$ is the category of abelian categories with exact functors as morphisms. Then the pullback, resp. pushforward, are defined by post-composition, resp. via left Kan extension. There is an epimorphism of transfer theories
\[\Phi\!: \Fun^f(-,\vect_{\CC}) \longonto \QQ[\pi_0(-)],\]
which is the identity on $\Grpd$, and $\Phi = K_0(-)_{\QQ} = K_0(-) \otimes_{\ZZ} \QQ\!: \AbCat \longto \Vect_{\QQ}$. Namely,
\[\Phi_Z\!: K_0(\Fun^f(Z,\vect_{\CC}))_{\QQ} \longonto \QQ[\pi_0(Z)],\ F \longmapsto (z \mapsto \dim_{\CC}(F(z))).\]
In particular, we obtain a surjective morphism of algebras
\[\Hall^{\otimes}(\E) \longonto \Hall(\E),\]
where $\Hall^{\otimes}(\E) = K_0(\H(S(\E),\Fun^f(-,\vect_{\CC})))$ denotes the Hall monoidal algebra.
\end{ex}

\begin{lemma}\label{morphofTr}
If $\Phi\!: \T \to \T'$ is a morphism of transfer theories, then the assignment
\[\Phi^*\T'\!: \C \longto \V',\ \Z \mapsto \T'(\Phi(\Z)),\]
is a transfer theory, where $f$ is $(\Phi^*\T')$-smooth, resp. $(\Phi^*\T')$-proper, if $\Phi(f)$ is $\T'$-smooth, resp. $\T'$-proper. The external product is given by
\[\T'(\Phi(\Z)) \otimes \T'(\Phi(\Z')) \xto{\ \boxtimes\ } \T'(\Phi(\Z)\times\Phi(\Z')) \isom \T'(\Phi(\Z\times\Z')),\]
while pullback $\Phi(-)^*$ and pushforward $\Phi(-)_*$ are induced by $\T'$.
\end{lemma}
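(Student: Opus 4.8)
The plan is to verify the transfer-theory axioms for $\Phi^*\T'$ one by one, in each case reducing to the corresponding property of $\T'$ composed with the structural properties of the functor $\Phi\colon \C\to\C'$ underlying the morphism of transfer theories. I would organize the proof around the following checklist: (i) the classes of $(\Phi^*\T')$-smooth and $(\Phi^*\T')$-proper morphisms are closed under composition and contain all equivalences; (ii) they are stable under the base-change compatibility required of smooth/proper classes; (iii) pullback is contravariantly functorial on $(\Phi^*\T')$-smooth maps and pushforward is covariantly functorial on $(\Phi^*\T')$-proper maps; (iv) base change $(f')_*(g')^* = g^* f_*$ holds; (v) the external product is natural with respect to $(\Phi^*\T')$-smooth pullback; (vi) the unit isomorphism $(\Phi^*\T')(\pt)\isoto 1_{\V'}$ holds and satisfies associativity and unitality.

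\begin{proof}
Write $\Phi$ also for the underlying right adjoint functor $\C\to\C'$, which by hypothesis sends $\T$-smooth maps to $\T'$-smooth maps and $\T$-proper maps to $\T'$-proper maps; note however that the classes of $(\Phi^*\T')$-smooth and $(\Phi^*\T')$-proper morphisms are defined as the \emph{preimages} under $\Phi$ of the $\T'$-smooth and $\T'$-proper morphisms, so they are a priori larger. Since $\Phi$ is a functor, it preserves composition and identities; hence $\Phi(g_1\circ g_2)=\Phi(g_1)\circ\Phi(g_2)$ lies in the $\T'$-smooth class whenever $g_1,g_2$ do, and $\Phi$ of an equivalence is an equivalence, which is $\T'$-smooth (and $\T'$-proper) by assumption on $\T'$. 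This gives (i). For (ii), observe that as a right adjoint, $\Phi$ preserves limits, so it sends a fibre-product square in $\C$ to a fibre-product square in $\C'$; if $f$ is $(\Phi^*\T')$-proper and $g$ is $(\Phi^*\T')$-smooth, then $\Phi(f)$ is $\T'$-proper and $\Phi(g)$ is $\T'$-smooth, so by the smooth/proper axiom for $\T'$ applied to the image square, $\Phi(f')$ is $\T'$-proper and $\Phi(g')$ is $\T'$-smooth, i.e.\ $f'$ is $(\Phi^*\T')$-proper and $g'$ is $(\Phi^*\T')$-smooth. For (iii), set $(\Phi^*\T')(g)^* := \T'(\Phi(g))^*$ and $(\Phi^*\T')(f)_* := \T'(\Phi(f))_*$; functoriality in each variance is immediate from functoriality of $\Phi$ together with functoriality of $\T'$ on its smooth, resp.\ proper, morphisms. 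Property (iv) follows by applying $\T'$ to the fibre-product square $\Phi$ of the original square: the base-change identity for $\T'$ on $(\Phi(f),\Phi(g))$ reads $(\Phi(f'))_*(\Phi(g'))^* = \Phi(g)^*\Phi(f)_*$, which is exactly the required identity for $\Phi^*\T'$.

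For the monoidal structure (v)--(vi), the external product is the composite displayed in the statement, using the external product of $\T'$ on $\Phi(\Z)$, $\Phi(\Z')$ and the canonical isomorphism $\Phi(\Z)\times\Phi(\Z')\isom\Phi(\Z\times\Z')$ coming from the fact that $\Phi$, being a right adjoint, preserves the finite products of $\C$. Naturality with respect to $(\Phi^*\T')$-smooth pullback then follows from naturality of the external product of $\T'$ along $\T'$-smooth maps, applied to the images $\Phi(g)$ of such maps, together with the compatibility of the chosen product-preservation isomorphisms with morphisms (naturality of the comparison map $\Phi(-)\times\Phi(-)\to\Phi(-\times-)$). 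The unit isomorphism is $(\Phi^*\T')(\pt)=\T'(\Phi(\pt))\isoto\T'(\pt')\isoto 1_{\V'}$, where the first map uses that $\Phi$ preserves the terminal object (again since it is a right adjoint) and the second is the unit constraint of $\T'$; the associativity and unitality diagrams for $\Phi^*\T'$ are obtained by transporting those for $\T'$ through the (coherent) product- and terminal-object-preservation isomorphisms of $\Phi$.
\end{proof}

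\textbf{Where the work is.} The verification is essentially a bookkeeping exercise, and no single step is genuinely hard. The one point that deserves care — and which is the closest thing to an obstacle — is checking that the coherence isomorphisms $\Phi(\Z)\times\Phi(\Z')\isom\Phi(\Z\times\Z')$ and $\Phi(\pt)\isom\pt'$, which are canonical but must be used consistently, interact correctly with the associativity and unitality constraints of $\T'$ so that those of $\Phi^*\T'$ actually hold on the nose rather than merely up to unnamed coherence. This is guaranteed because a right adjoint preserves finite products \emph{coherently} (the comparison maps form a monoidal natural structure), so the diagrams commute automatically; but it is the only place where one must invoke more than "apply $\Phi$, then apply the axiom for $\T'$."
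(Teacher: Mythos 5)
Your proof is correct and follows essentially the same route as the paper, which simply observes that only the base-change axiom needs checking and that it holds because $\Phi$ (being a right adjoint) carries a fibre-product square as in \eqref{smoothproper} to one with the same properties for $\T'$. Your version is more thorough in spelling out the remaining axioms (preimage classes, coherence of the product- and terminal-object-preservation isomorphisms), but the underlying idea is identical.
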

\begin{proof}
We only need to show that the base change property is satisfied. But since $\Phi$ is a morphism of transfer theories, it maps any diagram as in \eqref{smoothproper} in $\C$ to a diagram with the same properties in $\C'$.
\end{proof}

\begin{rem}\label{motRHA}
It follows immediately from the definitions that if $\Phi\!: \T \to \T'$ is a morphism of transfer theories, then for any unital $2$-Segal object $\Z$ of $\C$, the resulting two Hall algebras are the same,
\[\H(\Phi(\Z), \T') \isom \H(\Z, \Phi^*\T').\]
In particular, note that $\Phi(\Z)$ is indeed unital $2$-Segal again, because $\Phi$ is a right adjoint. In the special case from Lemma \ref{countMeas}, we can describe the Ringel-Hall algebra (Example \ref{RHA}) as
\[\Hall(\E) \isom \H(\S(\E),\bbmu_{\#}^*\QQ[\pi_0(-)]).\]
This is useful in that it allows us to essentially work exclusively with $\C = \St K$.
\end{rem}

\section{Equivariant motivic Hall algebras}\label{sec8}

For definitions and some general theory of group actions on stacks, see \cite{Romagny}.

We begin by defining the coefficient ring, which can be interpreted as a Hall algebra in and of itself. Indeed, a unital $2$-Segal groupoid (or category) $X$ with $X_0 = \pt$ defines a monoidal structure on $\St \ubar X_1$ in the usual way (and similarly for $\St (\ubar X_1 \times_K \Z)$). Now, if $X$ is $\QQ[\pi_0(-)]$-transferable, this restricts to a transfer theory $\Sta -$ on $\Cat$, from which we can pass to $K_0(\Sta -)\!: \Cat \dashto \Mod_{K_0(\Sta K)}$.

Moreover, assuming $X_2 \xto{\del_1} X_1$ is faithful, we merely need to require $X_2 \xto{(\del_2,\del_0)} X_1 \times X_1$ have $\pi_0$-finite fibres to obtain an algebra structure on $K_0(\Sta \ubar X_1)$ from the above.

However, note that the transfer theory itself does not restrict to $\Sta -$ in this case. This is analogous to the fact that the Ringel-Hall algebra can be defined with $\ZZ$-coefficients while the transfer theory $\QQ[\pi_0(-)]$ cannot. Since we are only interested in the algebra itself, we will not consider this further.

\begin{defn}
Let $\D$ be a proto-exact category. Then $\D$ is called split proto-exact, if for every pair of objects $M,N \in \D$, the set $\Ext^1_{\D}(M,N)$ of Yoneda extensions has precisely one element.
\end{defn}

\begin{ex}
\begin{enumerate}[label=$(\arabic*)$]
\item Any Segal groupoid $X$ with $X_0 = \pt$ is allowed, since the condition on $\del_1$ is unnecessary in that case.
\item If $\D$ is finitary, of course $S(\D)$ is even $\QQ[\pi_0(-)]$-transferable. But we may drop the finiteness assumption if $\D$ is a split proto-exact category, for example $\D = \Vect_{\FF_1}$ or any split exact category. In particular, any semisimple category is allowed.
\end{enumerate}
\end{ex}

\begin{rem}
This is enough for the main examples we have in mind, in particular, the categories $\D = \vect_{\FF_q}$, $q \geq 1$, as well as the semisimple category of split semisimple isocrystals over the $p$-adic field $K$. As explained in (the proof of) \cite{DOR}, Proposition 8.2.1, the requisite base change properties hold, and every isocrystal is split semisimple over some field extension. With some work, this can be used to descend some results to more general cases.

In general, we may consider the socle of $\D$, that is, the full subcategory $\D^{\ssi} \subseteq \D$ of semisimple objects. Then $K_0(\D) \isom K_0(\D^{\ssi})$, cf. \cite{An}, Corollary 1.4.12.
\end{rem}

\begin{defn}
Let $\Z \in \St K$ and let $\D$ be a category. The $\D$-equivariant Grothendieck group of stacks relative $\Z$ is the free abelian group $K_0^{\D}(\Sta \Z)$ generated by geometric equivalence classes of $\X \in \St (\ubar{\D} \times_K \Z)$ such that $\X \times_{\ubar{\D}} \pi_0(\ubar{\D}) \in \Sta \Z$, modulo the relations
\[\begin{gathered}
  [\X \disj \X'] = [\X] + [\X'], \\
  [\X_1] = [\X_2] \mbox{ for all } \X_i \to \X \mbox{ representable by } \\
  \mbox{Zariski locally trivial fibrations of schemes with the same fibres},
\end{gathered}\]
where $\X_i \to \X$ is understood to be a morphism in $\St (\ubar{\D} \times_K \Z)$, that is, $\D$-equivariant. As before, we write $K_0^{\D}(\Var K) := K_0^{\D, \rep}(\Sta K)$. Similarly, if $K$ is a non-archimedean field, we define $K_0^{\D}(\AnSta \Z)$ and $K_0^{\D}(\AnVar K)$ in the evident manner.
\end{defn}

Just like in the non-equivariant setting, equivariant motivic classes are interesting, because they are the universal equivariant motivic measures (which are more refined invariants than their non-equivariant counterparts).

\begin{ex}\label{eqmotmeas}
\begin{enumerate}[label=$(\arabic*)$]
\item Let $G_K = \Gal(K^{\sep}|K)$, let $\l \neq \cha K$ be a prime, and let $H$ be a group. The $H$-equivariant virtual Euler-Poincar\'e characteristic
\[\chi_c\!: K_0^H(\Var K) \longto K_0(\rep_{\Qlbar}\!(H \times G_K)),\ X \longmapsto \sum_{i\in\ZZ} (-1)^iH_c^i(X,\Qlbar),\]
is an equivariant motivic measure. Similarly, if $H$ is an analytic group, we get
\[\chi_c\!: K_0^H(\AnVar K) \longto K_0(\rep_{\Qlbar}\!(H(K) \times G_K)),\]
where $\rep$ in this context means smooth representations.

\item The eponymous motivic measure has its equivariant analogue in the map
\[K_0^H(\Var K) \longto K_0(\Fun(\bee H, \CHM K)).\]
\item Over $\CC$, both the Hodge-Deligne and Poincar\'e polynomials extend to equivariant motivic measures (cf. \cite{MotMeas}, also for further examples, as well as \cite{EqMotMeas}).
\end{enumerate}
\end{ex}

Again we may extend all of the above to stacks in the usual manner via localization.

\begin{defn}
The $\D$-equivariant motivic transfer theory $K_0^{\D}(\Sta -)$ on $\St K$ has pullback and pushforward defined in the same way as $K_0(\Sta -)$; given $\Z,\Z'\in\St K$, the external product is the composition of the fibre product morphism
\[K_0^{\D}(\Sta \Z) \otimes K_0^{\D}(\Sta \Z') \longto K_0^{\D \times \D}(\Sta (\Z \times_K \Z'))\]
with pullback-pushforward along the correspondence of constant stacks
\begin{equation}\label{SdotCcorr}
  \begin{tikzcd}
    \ubar{S_2(\D)} \ar[r, "{\del_1}"] \ar[d, "{(\del_2,\del_0)}"'] & \ubar{\D} \\
    \ubar{\D} \times_K \ubar{\D} & 
  \end{tikzcd}
\end{equation}
or more precisely, its fibre product with $\Z \times_K \Z'$ over $K$.
\end{defn}

\begin{rem}\label{eqTrId}
Let $\Z$ be a unital $2$-Segal stack. Then multiplication on
\begin{equation}\label{eqTrf}
K_0^{\D}(\Sta \Z_1) \isom K_0(\Fun^f(\D,\Sta \Z_1))
\end{equation}
is defined as in Example \ref{HallToHall}, but with $\vect_{\CC}$ replaced by $\Sta \Z_1$, endowed with the monoidal structure defined by $\Z$. Namely, by pullback, resp. pushforward, of functors $\Fun^f(-,\Sta \Z_1)$ along \eqref{SdotCcorr}, then taking the Grothendieck group. In particular, $K_0^{\D}(\Sta K)$ can be understood as a motivic version of Green's Hall algebra (\cite{Green1}, \cite{Zel}, and \cite{HSS1}, Example 8.3.18), as alluded to above.

Moreover, since the generators of $K_0^{\D}(\Sta \Z_1)$ are qcqs, we have
\[K_0^{\D}(\Sta \Z_1) \isom \!\bigoplus_{N\in\pi_0(\D^{\simeq})}\! K_0^{\Aut(N)}(\Sta \Z_1),\]
where $K_0^{G}(\Sta \Z_1)$ is defined exactly like $K_0^{\D}(\Sta \Z_1)$, but with $\bee G$ in place of $\D$.

%Note that we merely require $\X \times_{\ubar{\D}} \pi_0(\ubar{\D}) \in \Sta \Z$, that is, we forget the $\D$-action. Without this adjustment, the pullback-pushforward along \eqref{SdotCcorr} is not well-defined. Indeed, consider a point
%\[x' = (x,N \into E \onto M) \in (\X \times_{\ubar{\D} \times \ubar{\D},\ \!\!(\del_2,\del_0)} \ubar{S_2(\D)})(K),\]
%where $\X\in \Sta (\ubar{\D}\times_K \ubar{\D}) \times_K (\Z \times_K \Z')$. Then the stabilizer group at $x'$ is given by
%\[\Aut(x') = \Aut(x) \times_{\ubar{\Aut(N)} \times \ubar{\Aut(M)},\ \!\!(\del_2,\del_0)} \ubar{\Aut(N \into E)}.\]
%If $\X$ is an algebraic space, say, then it follows (cf. \cite{Rie}, $\mathsection$4.1) that $\Aut(x') \isom \ubar{\Hom(M,N)}$ is a constant group scheme. Thus, it is usually not affine (unless $\# K < \infty$ or $\D=0$).
%
%On the other hand,
%\[(\X \times_{\ubar{\D} \times \ubar{\D},\ \!\!(\del_2,\del_0)} \ubar{S_2(\D)}) \times_{\ubar{\D}} \Spec(K)\]
%has exactly the same stabilizer groups as $\X$, since $\ker(\del_1) \subseteq \Aut(N \into E)$ is trivial.
\end{rem}

Consequently, the following result allows us to extract a new Hall algebra.

\begin{lemma}
The assignment $K_0^{\D}(\Sta -)$ defines a transfer theory on $\St K$, with the same smooth and proper morphisms as $K_0(\Sta -)$.
\end{lemma}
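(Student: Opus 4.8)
The plan is to verify, one by one, the axioms of a transfer theory for the assignment $K_0^{\D}(\Sta -)$, reusing as much as possible the fact that $K_0(\Sta -)$ is already known to be a transfer theory (Example \ref{motTr}) and that $K_0^{\D}(\Sta \Z)$ is built from $\St(\ubar{\D} \times_K \Z)$ with a finiteness condition over $\pi_0(\ubar{\D})$.

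First I would check that the classes of $K_0(\Sta -)$-smooth and $K_0(\Sta -)$-proper morphisms in $\St K$ (all morphisms being proper, finite-presentation morphisms between stacks with affine geometric stabilizers being smooth) behave well with respect to the new functor. Given $g\!: \Z' \to \Z$ smooth, I must produce a pullback $g^*\!: K_0^{\D}(\Sta \Z) \to K_0^{\D}(\Sta \Z')$: this is base change along $\id_{\ubar\D} \times_K g\!: \ubar\D \times_K \Z' \to \ubar\D \times_K \Z$, and one checks that if $\X \in \St(\ubar\D \times_K \Z)$ satisfies $\X \times_{\ubar\D} \pi_0(\ubar\D) \in \Sta\Z$, then its base change satisfies the analogous condition over $\Z'$ — this follows because $\X \times_{\ubar\D} \pi_0(\ubar\D)$ pulls back along $g$ and $g$ is $K_0(\Sta -)$-smooth. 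Functoriality in $g$ and compatibility with the relations \eqref{K0StRel} are immediate since fibre products preserve disjoint unions and Zariski-locally-trivial fibrations. For $f\!: \Z \to \Z'$ proper, pushforward is precomposition $\St(\ubar\D \times_K \Z) \to \St(\ubar\D \times_K \Z')$, which manifestly preserves the defining conditions and the relations; functoriality is obvious.

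Next I would check base change: for a fibre square as in \eqref{smoothproper} with $f$ proper and $g$ smooth, the identity $(f')_*(g')^* = g^* f_*$ holds because applying $\id_{\ubar\D} \times_K (-)$ turns the square into another fibre square of the same type in $\St(\ubar\D \times_K -)$, and the identity there is just composition/base change of fibrations of stacks (it is essentially formal, holding already at the level of $\St -$). Then the external product: it is the composite displayed in the definition of $K_0^{\D}(\Sta -)$, namely the fibre-product map $K_0^{\D}(\Sta \Z) \otimes K_0^{\D}(\Sta \Z') \to K_0^{\D \times \D}(\Sta(\Z \times_K \Z'))$ followed by pull-push along the correspondence \eqref{SdotCcorr} of constant stacks (more precisely its product with $\Z \times_K \Z'$); one must check this lands in $\Sta$ after restricting over $\pi_0(\ubar\D)$, which reduces to the fact that $\ubar{S_2(\D)} \to \ubar\D \times_K \ubar\D$ has $\pi_0$-finite fibres because $\D$ is split proto-exact (so $\Ext^1_\D$ is a singleton and $S_2(\D) \to \D \times \D$ is faithful with the required finiteness), making $(\del_2,\del_0)^*$ well-defined on the relevant subcategories and $(\del_1)_*$ harmless. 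Naturality of $\boxtimes$ with respect to smooth pullbacks, and the unit isomorphism $K_0^{\D}(\Sta \pt) \to 1_{\Mod_{K_0(\Sta K)}}$ together with associativity and unitality, then follow from the corresponding facts for $K_0(\Sta -)$ combined with the unitality/associativity (i.e. the unital $2$-Segal property) of $\ubar{S(\D)}$, exactly as in the abstract Green-algebra argument underlying Remark \ref{eqTrId}.

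I expect the main obstacle to be the bookkeeping around the external product: unlike the naive $K_0(\Sta -)$ case, the external product here is not simply the fibre-product map but is twisted by the correspondence \eqref{SdotCcorr}, so verifying its naturality with respect to smooth pullback and the associativity/unitality constraints requires unwinding the monoidal structure on $\St \ubar X_1$ attached to the unital $2$-Segal object $\ubar{S(\D)}$ and checking it is compatible with all base changes — effectively re-deriving the coherence of Green's induction product in this relative, stacky, motivic context. Everything else is routine transport of the already-established properties of $K_0(\Sta -)$ through the functors $\id_{\ubar\D} \times_K (-)$, so I would phrase the proof as: ``the verification of the transfer-theory axioms is identical to that for $K_0(\Sta -)$ applied over $\ubar\D \times_K \Z$, together with the fact that the twist by \eqref{SdotCcorr} is well-defined and coherent because $\D$ is split proto-exact,'' and only spell out the external-product coherence in detail.
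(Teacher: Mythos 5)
Your proof is correct and follows essentially the same route as the paper: the paper's own proof of this lemma consists of exactly your check that the condition $\X \times_{\ubar{\D}} \pi_0(\ubar{\D}) \in \Sta \Z$ is preserved under $K_0(\Sta -)$-smooth pullback, and then defers the remaining axioms (well-definedness of the external product via the correspondence \eqref{SdotCcorr}, associativity from the unital $2$-Segal property of $\ubar{S(\D)}$, naturality of $\boxtimes$ from functoriality and base change) to the general Lemma \ref{eqTrGen}, whose content is precisely what you verify inline.
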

\begin{proof}
This follows from Lemma \ref{eqTrGen} below, in accord with the transfer theory structure from Example \ref{motTr}. Indeed, for a $K_0(\Sta -)$-smooth map $\Z' \to \Z$, and all stacks $\X$ over $\ubar{\D} \times_K \Z$ such that $\X \times_{\ubar{\D}} \pi_0(\ubar{\D}) \in \Sta \Z$, the fibre product
\[\X \times_{(\ubar{\D} \times_K \Z)} (\ubar{\D} \times_K \Z') \times_{\ubar{\D}} \pi_0(\ubar{\D}) = (\X \times_{\ubar{\D}} \pi_0(\ubar{\D})) \times_{\Z} \Z'\]
lies in $\Sta \Z'$.
\end{proof}

\begin{lemma}\label{eqTrGen}
Let $\T\!: \C \dashto \V$ be a transfer theory, and let $\Z$ be a unital $2$-Segal object of $\C$, satisfying the hypotheses of Proposition \ref{HA}. Then the assignment
\[\T^{\Z_1}(-)\!: \C \longto \V,\ \Z \longmapsto \T(\Z_1\times \Z)\]
is a transfer theory with respect to $\T$-smooth and $\T$-proper morphisms, where pullback and pushforward are defined by $\T$ in the apparent way, and external product given as follows,
\[\begin{tikzcd}[column sep=1.7em]
  \T^{\Z_1}(\Z) \otimes \T^{\Z_1}(\Z') \ar[r, "{\boxtimes}"] & \T(\Z_1\times\Z_1\times\Z\times\Z') \ar[r, "{(\del_2,\del_0)^*\!}"] & \T(\Z_2\times\Z\times\Z') \ar[r, "{(\del_1)_*}"] & \T^{\Z_1}(\Z\times\Z')
\end{tikzcd}\]
where the appropriate identity maps are suppressed from the notation.
\end{lemma}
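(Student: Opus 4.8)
The plan is to verify the axioms of a transfer theory for $\T^{\Z_1}(-)$ one by one, using that $\T$ itself is a transfer theory and that $\Z$ is $\T$-transferable. First, the smooth and proper morphisms: a map $f\!: \W \to \W'$ in $\C$ is declared $\T^{\Z_1}$-smooth, resp. proper, exactly when $\id_{\Z_1} \times f$ is $\T$-smooth, resp. proper. Since $\T$-smooth and $\T$-proper morphisms are stable under base change along one another (by the defining property of a transfer theory), are closed under composition, and contain all equivalences, and since products $\id_{\Z_1}\times(-)$ preserve all of these (base change squares map to base change squares, composition to composition), these two classes satisfy the required closure properties, and the relevant compatibility in a fibre square as in \eqref{smoothproper} follows immediately by applying $\id_{\Z_1}\times(-)$ to that square. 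Pullback $g^* := (\id_{\Z_1}\times g)^*$ on $\T^{\Z_1}$-smooth maps and pushforward $f_* := (\id_{\Z_1}\times f)_*$ on $\T^{\Z_1}$-proper maps are then contravariant, resp. covariant, functors, directly inherited from $\T$, and they satisfy base change $(f')_*(g')^* = g^* f_*$ because $\id_{\Z_1}\times(-)$ carries the diagram \eqref{smoothproper} to a diagram of the same shape to which the base change property of $\T$ applies.

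Next I would construct the external product and check its naturality and the associativity/unitality constraints. The displayed three-step composite — apply $\boxtimes$ of $\T$, then pull back along $(\del_2,\del_0)\times\id_{\Z\times\Z'}$, then push forward along $\del_1\times\id_{\Z\times\Z'}$ — makes sense precisely because $\Z$ is $\T$-transferable: the correspondence $\Z_1\times\Z_1 \xleftarrow{(\del_2,\del_0)} \Z_2 \xrightarrow{\del_1} \Z_1$ is $\T$-(smooth, proper), hence so is its product with $\id_{\Z\times\Z'}$, so $(\del_2,\del_0)^*$ and $(\del_1)_*$ are defined on the relevant objects. Naturality with respect to pullback along a $\T^{\Z_1}$-smooth map $h\!: \U\to\Z$ (and likewise on the $\Z'$-factor) reduces to naturality of $\boxtimes$ under $\T$-smooth pullback together with the base change property of $\T$ applied to the square obtained by crossing the $\Z$-leg with the (smooth, proper) correspondence over $\Z_1^{\times 2}$; this is the standard "slide the pullback past the correspondence" argument. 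The unit object is $\T^{\Z_1}(\pt) = \T(\Z_1)$ with its unit element coming from the unital structure of $\Z$ (the second correspondence in \eqref{SdotCorr}, $\pt \leftarrow \Z_0 \xrightarrow{\sigma_0}\Z_1$), and the isomorphism $\T^{\Z_1}(\pt)\isoto 1_{\V}$ required by the definition is not literally available — so in fact the cleanest formulation is that $\T^{\Z_1}$ is a transfer theory valued in the monoidal category $\Mod_{\H(\Z,\T)}$ (module objects in $\V$ over the Hall algebra of $\Z$), whose unit is $\H(\Z,\T)$ itself; alternatively one simply notes that all of Proposition \ref{HA} goes through with this relative unit, which is all that is needed downstream.

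The main obstacle is the verification of the associativity and unitality constraints for $\btimes$, i.e.\ that the pentagon and triangle diagrams commute. This is exactly the same bookkeeping as in the proof that $\H(\Z,\T)$ is an associative unital algebra (Proposition \ref{HA}, following \cite{HSS1}, Proposition 8.1.7): it unwinds to a string of equalities that alternately invoke the $2$-Segal equivalences for $\Z$ (which identify the various iterated fibre products $\Z_1\times_{\Z_1}^{?}\Z_1$ appearing when one composes two external products) and the base change property of $\T$ (to commute the pushforwards past the pullbacks). Concretely, both ways of bracketing a triple external product get rewritten, using the $2$-Segal maps $X_3 \to X_{\{0,1,2\}}\times_{X_{\{0,2\}}} X_{\{0,2,3\}}$ and $X_3 \to X_{\{0,1,3\}}\times_{X_{\{1,3\}}} X_{\{1,2,3\}}$ from the definition, into a single pull-push through $\T(\Z_3\times\Z\times\Z'\times\Z'')$, and the unitality constraint is handled the same way using the degeneracy $\sigma_0$ and the unital $2$-Segal maps $X_{\{0,2\}}\to X_2\times_{X_{\{1,2\}}} X_{\{1\}}$. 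I would therefore present this step by reducing it verbatim to the argument of \cite{HSS1} with the single harmless modification that every object $X_n$ is replaced by $X_n \times \Z^{(\prime\cdots)}$ and every structure map by its product with the identity, noting that this modification commutes with every step of that argument since products preserve fibre products, $\T$-smooth and $\T$-proper maps, and base change squares.
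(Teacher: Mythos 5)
Your proposal is correct and follows essentially the same route as the paper: the axioms not involving the external product are dismissed as tautological/inherited from $\T$, naturality of the external product under smooth pullback is verified by the same three-square diagram (functoriality of $\btimes$, commuting the two pullbacks, base change for $(\del_1)_*$), and associativity is delegated to the $2$-Segal property exactly as in Proposition \ref{HA}. Your observation that $\T^{\Z_1}(\pt)=\T(\Z_1)$ forces the theory to be valued in modules over the Hall algebra rather than in $\V$ with unit $1_{\V}$ is a genuine refinement that the paper only acknowledges afterwards (Remark \ref{eqTrExpl}), and it matches the intended reading.
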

\begin{proof}
Any properties not concerning the external product are essentially tautological. Now, the associativity of the external product is ensured by $\Z$ being $2$-Segal (cf. Proposition \ref{HA}). Let $f\!: \X \to \Z$ be a $\T$-smooth morphism. Then the following diagram commutes.
\[\begin{tikzcd}[column sep=1.7em]
  \T^{\Z_1}(\X) \otimes \T^{\Z_1}(\Z') \ar[dr, phantom, "{\!\!\!(1)}"] \ar[r, "{\boxtimes}"] & \T(\Z_1\times\Z_1\times\X\times\Z') \ar[dr, phantom, "{(2)\!\!\!\!}"] \ar[r, "{(\del_2,\del_0)^*\!}"] & \T(\Z_2\times\X\times\Z') \ar[dr, phantom, "{(3)\!\!\!}"] \ar[r, "{(\del_1)_*}"] & \T^{\Z_1}(\X\times\Z') \\
  \T^{\Z_1}(\Z) \otimes \T^{\Z_1}(\Z') \ar[r, "{\boxtimes}"'] \ar[u, "{f^*}"'] & \T(\Z_1\times\Z_1\times\Z\times\Z') \ar[r, "{(\del_2,\del_0)^*\!}"'] \ar[u, "{f^*}"'] & \T(\Z_2\times\Z\times\Z') \ar[r, "{(\del_1)_*}"'] \ar[u, "{f^*}"'] & \T^{\Z_1}(\Z\times\Z') \ar[u, "{f^*}"']
\end{tikzcd}\]
Namely, square (1) by functoriality of $\btimes$, square (2) is clear, and (3) by base change.
\end{proof}

\begin{defn}\label{eqHA}
Let $\E$ be an exact, $K$-linear category, and $\D$ split proto-exact. The $\D$-equivariant motivic Hall algebra $\H^{\D}(\E)$ of $\E$ is the Hall algebra of $\S(\E)$ with respect to the transfer theory $K_0^{\D}(\Sta -)$,
\[\H^{\D}(\E) := \H(\S(\E),K_0^{\D}(\Sta -)).\]
Similarly, we define the analytic $\D$-equivariant motivic Hall algebra
\[\H^{\D, \an}(\E) := \H(\S(\E),K_0^{\D}(\AnSta (-)^{\an})).\]
\end{defn}

In what follows, for simplicity, we will usually make statements only for $\H^{\D}(\E)$, but they will be equally valid for $\H^{\D, \an}(\E)$.

\begin{rem}\label{eqTrExpl}
The $\D$-equivariant motivic Hall algebra $\H^{\D}(\E)$ carries a $K_0^{\D}(\Sta K)$-module structure. In fact, $K_0^{\D}(\Sta -)$ is a transfer theory with values in the category of $K_0^{\D}(\Sta K)$-modules by necessity. However, of course we may restrict scalars to $K_0(\Sta K)$ or $\ZZ$ as we please.

It is evident from the construction that $\H^{\D}(\E)$ is $K_0(\D) \times K_0(\E)$-graded, where the $K_0(\E)$-grading is defined exactly as for the usual (motivic) Hall algebra (cf. \eqref{K0grading}). In fact,
\[\H^{\D}(\E) \isom \H(\E) \oplus \bigoplus_{0 \neq N \in \pi_0(\D^{\simeq})} K_0^{\Aut(N)}(\Sta \M_{\E}),\]
by Remark \ref{eqTrId}, where the elements $[\X] \in \H(\E)$ carry the trivial action
\[\Z' \longto \pi_0(\ubar{\D}) \longinto \ubar{\D}.\]
Then the external product for $K_0^{\D}(\Sta -)$ is more explicitly described by parabolic induction, as follows. The fibre product morphism on
\[\begin{aligned}
  \!\bigoplus_{N\in\pi_0(\D^{\simeq})}\! K_0(\Sta \bee\ubar{\Aut(N)} \times_K \Z)\ \otimes \!\bigoplus_{M\in\pi_0(\D^{\simeq})}\! K_0(\Sta \bee\ubar{\Aut(M)} \times_K \Z') \\
  = \!\!\bigoplus_{N,M\in\pi_0(\D^{\simeq})}\!\! \big(K_0(\Sta \bee\ubar{\Aut(N)} \times_K \Z) \otimes K_0(\Sta \bee\ubar{\Aut(M)} \times_K \Z')\big)
\end{aligned}\]
is simply given by the component-wise fibre product map with values in
\[\!\!\bigoplus_{N,M\in\pi_0(\D^{\simeq})}\!\! K_0(\Sta \bee\ubar{(\Aut(N)\times\Aut(M))} \times_K (\Z \times_K \Z')).\]
Then pullback-pushforward along \eqref{SdotCcorr} $\times_K\ (\Z \times_K \Z')$ restricts to each component
\[\begin{tikzcd}[column sep=1.7em]
  K_0(\Sta \bee \ubar{P_{N,M}} \times_K \Z \times_K \Z') \ar[r, "{(\del_1)_*}"] & K_0(\Sta \bee\ubar{\Aut(N \oplus M)} \times_K \Z \times_K \Z') \\
  K_0(\Sta \bee\ubar{(\Aut(N)\times\Aut(M))} \times_K \Z \times_K \Z'), \ar[u, "{(\del_2,\del_0)^*}"] & 
\end{tikzcd}\]
where $P_{N,M} = \Aut(N \into N \oplus M \onto M)$ is the automorphism group of the unique extension up to equivalence of $M$ by $N$ (by abuse of notation). That is, we restrict the action along the inclusion
\[P_{N,M} \longinto \Aut(N) \times \Aut(M),\]
and then push forward along $\del_1$, which is given by the parabolic induction map
\begin{equation}\label{parabInd}
\Ind_{P_{N,M}}^{\Aut(N \oplus M)}\!:\ K_0^{P_{N,M}}(\Sta \Z \times_K \Z') \longto K_0^{\Aut(N \oplus M)}(\Sta \Z \times_K \Z').
\end{equation}
\end{rem}

\section{Abstract integration maps}\label{sec9}

\begin{defn}[see (\cite{Schn})]
A quasi-abelian category is an additive category with kernels and cokernels, which are preserved by pushouts and pullbacks, respectively.
\end{defn}

Let $\E, \B$ be $K$-linear quasi-abelian categories, consider a proto-exact category $\D$, together with an exact functor $\nu\!: \D \to \B$, and let $\omega\!: \E \to \B$ be an exact $K$-linear isofibration.

\begin{defn}
If $\Lambda$ is a totally ordered set, we denote by $\Fil{\Lambda}{\B}$ the category of objects with a separated exhaustive decreasing filtration, that is, pairs $(V,F)$ with $V \in \B$ and $F\!: \Lambda^{\op} \to \trl{\B}$ with $\lim F = 0$ and $\colim F = V$. When $\B = \vect_K$, we write $\Fil{\Lambda}{\B} = \Fil{\Lambda}{K}$, and for $L|K$, we set
\[\Fil{\Lambda}{L|K} = \Fil{\Lambda}{L} \times_{\vect_L} \vect_K.\]
\end{defn}

\begin{ex}\label{isofibEX}
Let $\B$ be a quasi-abelian category.
\begin{enumerate}[label=$(\arabic*)$]
\item\label{isofiba} For a totally ordered set $\Lambda$, the fibre functor $\omega\!: \Fil{\Lambda}{\B} \longto \B$ is an isofibration. Given a filtered object $(V,F^\bullet)\in\Fil{\Lambda}{\B}$ and an isomorphism $g\!: V \isoto W$ in $\B$, we can lift it to
\[g\!: (V,F^\bullet) \isoto (W,gF^\bullet) \mbox{ in } \Fil{\Lambda}{\B}.\]

\item Consider the fibre functor $\Rep_{\B}(Q) \to \B$ for a quiver $Q$. Let $g$ be an isomorphism
\[g\!: \bigoplus_{i \in Q_0} A_i \isoto W,\]
with $A = (A_i,\phi_i)_{i \in Q_0}\in\Rep_{\B}(Q)$. Then $W = \bigoplus_{i \in Q_0} \im(g_i)$ naturally decomposes, where
\[\Hom(\bigoplus_{i \in Q_0} A_i, W) \isoto \prod_{i \in Q_0} \Hom(A_i,W),\ g \longmapsto (g_i)_{i \in Q_0},\]
and $g$ induces an isomorphism $A \isoto (\im(g_i), g_i \circ \phi_i \circ g_i^{-1})_{i \in Q_0}$ of quiver representations.
\item Similarly, consider the forgetful functor on the category of representations of $G$ in $\B$, for any group $G$. If $f\!: V \isoto W$ is an isomorphism in $\B$, then $\bee G \to \B,\ * \mapsto W$, with the action $g \mapsto f \circ \rho(g) \circ f^{-1}$, is isomorphic to $\rho\!: \bee G \to \B,\ * \mapsto V$, via the induced map by $f$.

By applying this example pointwise, we can extend it to representations of (affine) group schemes $G$ over $K$. Moreover, we deduce that the fibre functor of a neutral Tannakian category over $K$ is an (exact, $K$-linear) isofibration. This further generalizes to quasi-Tannakian categories (see \cite{OS}; for a brief summary, see \cite{An}, $\mathsection$2.1.4).

Indeed, if $X$ is an integral affine scheme over $K$ with $G$-action, together with a $K$-rational fixed point $x \in X(K)^G$, then the fibre functor $\rep_X(G) \to \vect_K$ at $x$ is an isofibration. Namely, one can show directly that every equivariant vector bundle on $X$ is of the form $V \otimes_K \O_X$ for some $V\in\rep_K(G)$. That is, it once again suffices to lift the action.

Note that the example $\omega\!: \Fil{[n]}{K} \to \vect_K$ from \ref{isofiba} above is a special case of this.
\end{enumerate}
\end{ex}

\begin{lemma}\label{K0(ratlSdot)}
Let $n\in\NN$. The \'etale stackification of the functor
\[\Aff_K \longto \Grpd,\ \Spec R \longmapsto K_0(S_n(\hat\E_R)),\]
defines a constant group scheme over $K$.
\end{lemma}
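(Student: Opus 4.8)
The plan is to first make the Grothendieck-group functor explicit and reduce to $n=1$, and then to show that the resulting presheaf of abelian groups becomes constant after étale sheafification. Note first that $\hat\E_R$ is again a $K$-linear proto-exact category, so that $S_n(\hat\E_R)$ and its Grothendieck group are defined exactly as in the construction of $\S(\E)$ in \S\ref{sec7}, and $\Spec R \mapsto K_0(S_n(\hat\E_R))$ is a presheaf of abelian groups by functoriality of the Waldhausen construction and of $K_0$. The associated-graded functor $S_n(\hat\E_R) \to (\hat\E_R)^{\times n}$, which records the $n$ consecutive subquotients of a flag, is exact, admits the split-flag functor as a section, and induces a natural isomorphism $K_0(S_n(\hat\E_R)) \isoto K_0(\hat\E_R)^{\oplus n}$ by Waldhausen additivity at the level of $K_0$. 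Since $\ubar{(-)}$ commutes with finite products, a finite product of constant $K$-group schemes is constant, and it therefore suffices to treat $n=1$: we must show that the étale sheafification of $\Spec R \mapsto K_0(\hat\E_R)$ is a constant group scheme, which we shall identify with $\ubar{K_0(\hat\E)}$.

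Base change along the unit $K \to R$ gives a morphism of presheaves $\ubar{K_0(\hat\E)} \to \big(\Spec R \mapsto K_0(\hat\E_R)\big)$, and I claim it is an isomorphism after étale sheafification. Any object of $\hat\E_R = \widehat{\hat\E \otimes_K R}$ is a retract of a finite direct sum $P = \bigoplus_j A_j$ of objects $A_j \in \hat\E$, cut out by an idempotent $e$ of the $K$-algebra $B_R := B \otimes_K R$ with $B := \End_{\hat\E}(P)$; local properness of $\E$ forces $B$ to be finitely generated projective, hence finite, over $K$. For any field $\Omega$, the conjugacy classes of idempotents of $B \otimes_K \Omega$ form a finite set, detected on the semisimple quotient $B \otimes_K \Omega / \operatorname{rad}$, whose matrix factors split over a fixed finite separable extension $K_0/K$ depending only on $B$; moreover $x \mapsto (\text{class of the image of } e \text{ in } B \otimes_K \kappa(x))$ is locally constant on $\Spec R$. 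Pulling back along $K \to K_0$ and then to a suitable Zariski cover of $\Spec(R \otimes_K K_0)$, the idempotent $e$ becomes conjugate to one, $e_0 \in B$, already defined over $K$, by the standard lifting-of-idempotents argument. Hence, étale-locally on $\Spec R$, the given object is isomorphic to $(P, e_0) \otimes_K R$, so its class is the image of $[(P,e_0)] \in K_0(\hat\E)$; this proves étale-local surjectivity.

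For injectivity it suffices to test on connected $R$, and after writing $R$ as a filtered union of finitely generated $K$-subalgebras we may pick a point $\Spec K' \to \Spec R$ with $K'/K$ finite separable; base change along it makes $K_0(\hat\E) \to K_0(\hat\E_R)$ a split monomorphism, with retraction factoring through $K_0(\hat\E_{K'})$. Together with the previous step, the sheafification of $\Spec R \mapsto K_0(\hat\E_R)$ is, étale-locally, the constant sheaf on $K_0(\hat\E)$. The point that requires care — and which I expect to be the main obstacle — is to patch these local identifications, i.e. to check that the resulting étale sheaf is the \emph{constant} group scheme $\ubar{K_0(\hat\E)}$ rather than a nontrivial finite-étale twist of it. This reduces to the assertion that separable base change neither enlarges the Grothendieck group nor produces Galois monodromy: $K_0(\hat\E) \isoto K_0(\hat\E_{K'})$ for every finite separable $K'/K$, or equivalently the indecomposables of $\hat\E_{K^{\sep}}$ together with their $K_0$-classes are $\operatorname{Gal}(K^{\sep}/K)$-invariant. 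This holds under the hypotheses in force in this section — the relevant endomorphism algebras being geometrically split, so that Krull--Schmidt over $K^{\sep}$ induces no permutation of $K_0$-classes — and is the statement to pin down carefully. Granting it, the sheafification is $\ubar{K_0(\hat\E)}$, hence $\ubar{K_0(S_n(\hat\E))}$ for general $n$, a constant group scheme over $K$.
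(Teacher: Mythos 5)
Your strategy is the same as the paper's: reduce to $n=1$ by Waldhausen additivity, then argue that the presheaf $R \mapsto K_0(\hat\E_R)$ becomes constant after \'etale sheafification by analysing idempotents in $\End_{\hat\E}(P)\otimes_K R$. (The paper additionally invokes Murre's theorem for representability, but for a constant sheaf this is automatic, so omitting it is harmless; the paper's local computation is the assertion that over a local ring every object of $\E \hatotimes_K R$ has the form $A\otimes_K R^{\oplus r}$, with inverse $[A\otimes_K R^{\oplus r}]\mapsto r\cdot[A]$.)

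The point you defer to the very end --- that $K_0(\hat\E)\to K_0(\hat\E_{K'})$ is an isomorphism for finite separable $K'/K$, with no Galois monodromy --- is indeed where all the content sits, and you are right to isolate it. Two remarks, though. First, your surjectivity paragraph already presupposes it: after base change to the splitting field $K_0$, the primitive idempotents of $(B\otimes_K K_0)/\mathrm{rad}$ are defined over $K_0$, not over $K$, so the claim that $e$ becomes conjugate to some $e_0\in B$ ``already defined over $K$'' is exactly the deferred assertion rather than an output of lifting idempotents. Second, that assertion does not follow from the standing hypotheses: local properness only gives $\End_{\E}(A)\in\proj_K$, and for instance $\E=\vect_L$ with $L/K$ a finite Galois extension, viewed as a locally proper $K$-linear (abelian, semisimple, hereditary) category, has $\hat\E_L$ Morita equivalent to $L\otimes_K L\cong\prod_{\Gal(L/K)}L$, so $K_0(\hat\E_L)\cong\ZZ[\Gal(L/K)]$ with the regular permutation action; the sheafification is then a nonconstant \'etale group scheme. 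So your final step needs an additional hypothesis (e.g.\ that the endomorphism algebras are geometrically split, or that $K$ is separably closed), not just the hypotheses in force. To be fair, the paper's own proof asserts the corresponding fact with no more justification than you give, so your argument is on par with it and more candid about where the difficulty lies --- but as written the gap is genuine, and it appears twice (once hidden in the surjectivity step, once flagged at the end).
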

\begin{proof}
By Waldhausen's additivity theorem (\cite{Wa}, Lemma 1.4.3; also cf. $\mathsection$1.8), it suffices to show constancy of the functor
\[\K_0(\E)\!: \Spec R \longmapsto K_0(\hat\E_R).\]
Indeed, by the theorem of Murre \cite{Art}, Lemma 4.2, the resulting \'etale stackification $\ubar{K_0(\E)}$ will be a scheme. Now, there is a natural map $\ubar{K_0(\E)} \to \K_0(\E)$. Conversely, if $R$ is a local ring, the objects of $\E \hatotimes_K R$ can be written as $A \otimes_K R^{\oplus r}$, and the inverse map is given by
\[K_0(\E \hatotimes_K R) \longto K_0(\E),\ [A \otimes_K R^{\oplus r}] \longmapsto r \cdot [A].\]
\end{proof}

Let $\C$ be an $\infty$-category with finite limits and disjoint coproducts, and let $\V$ be a symmetric monoidal category with direct sums $\bigoplus$, which distribute over the tensor product.

\begin{defn}
A transfer theory $\T\!: \C \longto \V$ is $\sigma$-additive, if it preserves small coproducts,
\begin{equation}\label{coprodIso}
\T\big(\coprod_{i\in I} \Z_i\big) \isoto \bigoplus_{i\in I} \T(\Z_i),
\end{equation}
and for any collection $(f_i)_{i\in I}$ of $\T$-smooth, resp. $\T$-proper, morphisms, such that $\coprod f_i$ is again $\T$-smooth, resp. $\T$-proper, the identification \eqref{coprodIso} is functorial with respect to their pullback, resp. pushforward, and the diagram
\[\begin{tikzcd}
  \T(\coprod_{i\in I} \Z_i) \otimes \T(\coprod_{j\in J} \Z'_j) \ar[r, "{\boxtimes}"] \ar[d, "{\sim}"] & \T(\coprod_{i\in I} \Z_i \times \coprod_{j\in J} \Z'_j) \ar[r, "{=}"] & \T(\coprod_{i,j} \Z_i \times \Z'_j) \ar[d, "{\sim}"] \\
  \bigoplus_{i\in I} \T(\Z_i) \otimes \bigoplus_{j\in J} \T(\Z'_j) \ar[r, "{=}"] & \bigoplus_{i,j} \T(\Z_i) \otimes \T(\Z'_j) \ar[r, "{\boxtimes}"] & \bigoplus_{i,j} \T(\Z_i \times \Z'_j)
\end{tikzcd}\]
commutes, for all $\Z_i,\Z'_j \in \C$.
\end{defn}

\begin{ex}
The equivariant motivic transfer theory $K_0^{\D}(\Sta -)$ is $\sigma$-additive. The isomorphism \eqref{coprodIso} is explained in Remark \ref{eqTrId}, and the corresponding compatibilities follow from the fact that disjoint unions commute with fibre products.
\end{ex}

Now take $\C = \St K$, and let $\T\!: \C \dashto \V$ be a $\sigma$-additive transfer theory.

\begin{defn}
We say that $\Z\in\St K$ is measurable if its structure map $\Z \to \Spec(K)$ is both a $\T$-smooth and $\T$-proper morphism. If $\Z$ is measurable, we write
\[\mu_{\T}(\Z) := (\Z \to \Spec K)_* \circ (\Z \to \Spec K)^* \in \End(\T(\Spec K)) \isom \End(1_{\V}).\]
\end{defn}

\begin{rem}
Let $(\Z_i)_{i\in I}$ be a set of measurable stacks, such that $\coprod_{i\in I} \Spec(K)$ is again measurable. Then
\begin{equation}\label{muTadditive}
{\T}\big(\coprod_{i\in I} \Z_i\big) = \sum_{i\in I} \mu_{\T}(\Z_i) \circ \nabla^*,
\end{equation}
where $\nabla\!: \coprod_{i\in I} \Spec(K) \to \Spec(K)$ is the codiagonal. Indeed, \eqref{muTadditive} says that the diagram
\[\begin{tikzcd}
  \T(\Spec K) \ar[r, "{\nabla^*}"] \ar[dr] & \T(\coprod \Spec K) \ar[r, "{=}"] & \bigoplus \T(\Spec K) \ar[d, "{\bigoplus (\Z_i \to \Spec K)^*}"'] \\
  \bigoplus \T(\Z_i) \ar[d, "{\bigoplus (\Z_i \to \Spec K)_*}"] & \ar[l, "{=}"'] \T(\coprod \Z_i) \ar[dr] & \ar[l, "{=}"'] \bigoplus \T(\Z_i) \\
  \bigoplus \T(\Spec K) \ar[r, "{=}"] & \T(\coprod \Spec K) \ar[r, "{\nabla_*}"] & \T(\Spec K),
\end{tikzcd}\]
commutes, which in turn follows from $\sigma$-additivity of $\T$.
\end{rem}

\begin{ex}
If $\T = K_0(\Sta -)$, then $\Z\in\St K$ is measurable if and only if $\Z\in\Sta K$, and
\[\mu_{\T}(\Z) = [\Z] \in K_0(\Sta K).\]
For $\T = \mu_{\#}^*\QQ[\pi_0(-)]$, a stack $\Z$ is measurable if and only if $\Z(K)$ is finite, that is, $\pi_0(\Z(K))$ and all of its automorphism groups are finite. As expected, in this case,
\[\mu_{\T}(\Z) = \#\Z(K).\]
\end{ex}

\begin{defn}
Let $T$ be an algebra object in $\V$, with $\zeta\in\Aut(T)$. Further, let $\Gamma$ be a group, and let $\chi\!: \Gamma \times \Gamma \longto \ZZ$ be an integral $2$-cocycle, that is, satisfying the relation
\[\chi(\alpha,\beta) + \chi(\alpha\beta,\gamma) = \chi(\beta,\gamma) + \chi(\alpha,\beta\gamma)\]
for all $\alpha,\beta,\gamma\in\Gamma$. The corresponding twisted group ring $T^{\pair{\zeta,\chi}}[\Gamma]$ has underlying object
\[T[\Gamma] = \bigoplus_{\Gamma} T \in \V,\]
with natural inclusions $\iota_{\alpha}\!: T \to T[\Gamma]$ for $\alpha\in\Gamma$. The multiplication on $T^{\pair{\zeta,\chi}}[\Gamma]$ is given by
\[\sum_{\alpha,\beta\in\Gamma} \iota_{\alpha\beta} \circ \zeta^{\chi(\alpha,\beta)} \circ m \in \Hom\!\big(\!\bigoplus_{\Gamma\times\Gamma} T \otimes T, T[\Gamma]\big) = \Hom(T[\Gamma] \otimes T[\Gamma], T[\Gamma]),\]
where $m\!: T \otimes T \longto T$ denotes the multiplication on $T$. For us, $\Gamma$ will always be an abelian group, and $\chi$ bilinear. In this case, the unit for $T^{\pair{\zeta,\chi}}[\Gamma]$ is the composition $1_{\V} \to T \xto{\iota_1} T[\Gamma]$. In general, there is an additional factor of $\zeta^{-\chi(1,1)}\!: T \to T$.
\end{defn}

\begin{ex}
The Euler form of the $K$-linear quasi-abelian (or any exact) category $\E$ is the bilinear map
\[\chi\!: K_0(\E) \times K_0(\E) \longto \ZZ,\ ([A],[B]) \longmapsto \sum_{i\in\NN} (-1)^i\rk_K\Ext^i_{\E}(A,B).\]
\end{ex}

Suppose $\T\!: \St K \dashto \V$ is a $\sigma$-additive transfer theory such that $\S(\E)$ is $\T$-transferable. We abbreviate the corresponding Hall algebra by $\H(\E,\T) := \H(\S(\E),\T)$.

\begin{thm}\label{intthm}
Suppose $\M_{\E}$ is $\T$-proper. There exists a natural map of simplicial stacks
\begin{equation}\label{integration}
\oint^{(\bullet)}\!: \S(\E) \longto \ubar{K_0(S(\E))},
\end{equation}
the pushforward in degree $1$ along which defines a morphism
\begin{equation}\label{int}
\H(\E,\T) \longto \T(\Spec K)[K_0(\E)].
\end{equation}
Assume that $(\del_2,\del_0)$ is $\T$-proper, and that $[\AA^1_K/\Ga]$ is measurable with
\begin{equation}\label{probability}
\mu_{\T}([\AA^1_K/\Ga]) = 1.
\end{equation}
If $\E$ is hereditary, then \eqref{int} defines an algebra morphism
\[\int_{\E} -\ d\mu_{\T}\!: \H(\E,\T) \longto \T(\Spec k)^{\pair{\zeta,\chi^{\op}}}[K_0(\E)]\]
where $\zeta = \mu_{\T}(\AA^1_K)^{-1}$ denotes the Tate twist.
\end{thm}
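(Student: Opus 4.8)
The plan is to build the simplicial morphism \eqref{integration}, recognise the transfer-theoretic Hall algebra of its target as an \emph{untwisted} group ring, and then check that the degree-one pushforward carries the Hall product to the group-ring product up to an Euler-form twist — the key being that, for hereditary $\E$, that twist is constant on each bigraded piece.

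First I would construct $\oint^{(\bullet)}$: for each $n$, Lemma~\ref{K0(ratlSdot)} produces the constant group scheme $\ubar{K_0(S_n(\E))}$, and sending an $R$-family of objects of $S_n(\E)$ to its class in $K_0(S_n(\hat\E_R))$ defines a morphism $\S_n(\E)=\M_{S_n(\E)}\to\ubar{K_0(S_n(\E))}$; functoriality of $S_\bullet(-)$ and of $K_0$ in the simplicial operators makes this a map of simplicial stacks, and in degree one it is the class map defining the $K_0(\E)$-grading on $\H(\E,\T)$. By Waldhausen additivity the target is the constant simplicial stack on the nerve of the commutative monoid $K_0(\E)$; in particular its face $(\del_2,\del_0)$ is an equivalence and its degeneracy $\sigma_0$ a clopen immersion, so it is unital $2$-Segal and $\T$-transferable, and $\sigma$-additivity identifies its Hall algebra along $\T$ with the ordinary monoid ring $\T(\Spec K)[K_0(\E)]$. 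Write $\M_\E^{\alpha}\subseteq\M_\E$ for the open and closed substack of class $\alpha$, with structure map $p_\alpha$. Since $\M_\E$ is $\T$-proper, $\oint^{(1)}=\coprod_{\alpha}\big(\M_\E^{\alpha}\xrightarrow{p_\alpha}\Spec K\big)$ is $\T$-proper componentwise, so its pushforward $\oint^{(1)}_*\colon\T(\M_\E)=\H(\E,\T)\to\bigoplus_{\alpha}\T(\Spec K)$ exists; this is the morphism \eqref{int}, and it sends the unit $(\sigma_0)_*1$ to $\iota_0(1)$ because $p_0\circ\sigma_0=\id$.

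For multiplicativity I would reduce first: both products are $\T(\Spec K)$-bilinear and $\H(\E,\T)$ is $K_0(\E)$-graded, so it suffices to treat $x\in\T(\M_\E^{\alpha})$ and $y\in\T(\M_\E^{\gamma})$. Their Hall product $(\del_1)_*(\del_2,\del_0)^*(x\boxtimes y)$ is carried by the component $\S_2(\E)^{(\alpha,\gamma)}$ of short exact sequences with sub of class $\alpha$ and quotient of class $\gamma$; using naturality of $\oint^{(\bullet)}$ along $\del_1$, the fact that $\oint^{(2)}$ restricts to the structure map on $\S_2(\E)^{(\alpha,\gamma)}$, and the compatibility of $\boxtimes$ with pushforward to $\Spec K$ together with the isomorphism $\T(\pt)\isoto 1_\V$, the desired equality in $\T(\Spec K)^{\pair{\zeta,\chi^{\op}}}[K_0(\E)]$ becomes the single identity $f_*f^*=\zeta^{\chi^{\op}(\alpha,\gamma)}\cdot\id$ on $\T(\M_\E^{\alpha}\times_K\M_\E^{\gamma})$, where $f=(\del_2,\del_0)\colon\S_2(\E)^{(\alpha,\gamma)}\to\M_\E^{\alpha}\times_K\M_\E^{\gamma}$; here $f^*$ is legitimate because $\S(\E)$ is $\T$-transferable and $f_*$ because $(\del_2,\del_0)$ is assumed $\T$-proper.

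The heart of the proof — and the step I expect to be the main obstacle — is this last identity. The fibre of $f$ over a point $(A,C)$ is the groupoid of extensions $0\to A\to B\to C\to 0$, which as a stack is $\AA^{e}_K\times\bee{\Ga^{h}}$ with $h=\rk_K\Hom_\E(C,A)$ contributing only automorphisms, $e=\rk_K\Ext^1_\E(C,A)$, and $\Ga^{h}$ acting trivially. Heredity forces $h-e=\chi(C,A)=\chi^{\op}(\alpha,\gamma)$ to depend on $\alpha,\gamma$ only. I would then stratify $\M_\E^{\alpha}\times_K\M_\E^{\gamma}$ — which is of finite type by Theorem~\ref{NMUprop} — into finitely many locally closed substacks on which $h$, hence $e=h-\chi^{\op}(\alpha,\gamma)$, is constant and the relative $\Hom$ and $\Ext^1$ sheaves are locally free; over such a stratum $f$ is the composite of an $\AA^{e}$-bundle (Zariski-locally trivial) with the classifying stack of a rank-$h$ vector group, so the relations in $\T$, $\sigma$-additivity, and the measurability and normalisation \eqref{probability} of $[\AA^1_K/\Ga]$ — which gives $\mu_\T(\bee\Ga)=\zeta$, since $[\AA^1_K/\Ga]$ with trivial action is $\AA^1_K\times\bee\Ga$ — yield $f_*f^*=\mu_\T(\AA^{e})\,\mu_\T(\bee\Ga)^{h}\cdot\id=\zeta^{-e}\zeta^{h}\cdot\id=\zeta^{\chi^{\op}(\alpha,\gamma)}\cdot\id$; summing over strata via $\sigma$-additivity finishes it. The delicate points are exactly the fibrewise structure of $\S_2(\E)\to\M_\E\times_K\M_\E$ — including triviality of the $\bee{\Ga^{h}}$-gerbe, for which the split extension provides a section — and the globalisation of $\Hom$ and $\Ext^1$ over the strata; heredity is what makes the relative measure of $f$ constant along $\M_\E^{\alpha}\times_K\M_\E^{\gamma}$, so the sum over strata collapses to a single twist. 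The analytic version $\H^{\D,\an}(\E)$ is handled verbatim, with compact analytic stacks replacing finite-type algebraic ones.
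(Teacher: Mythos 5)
Your proposal is correct and follows essentially the same route as the paper: construct $\oint^{(\bullet)}$ from Lemma \ref{K0(ratlSdot)} and Waldhausen additivity, identify the target's Hall algebra as the untwisted monoid ring, reduce multiplicativity via base change to the identity $(\del_2,\del_0)_*(\del_2,\del_0)^*=\zeta^{\chi^{\op}(\alpha,\gamma)}\cdot\id$ on each bigraded piece, and establish that identity by computing the fibre as $[\AA^{\rk\Ext^1(C,A)}_K/\Ga^{\rk\Hom(C,A)}]$ (the paper cites \cite{Br}, Proposition 6.2) and globalizing by stratifying by the rank of $\Hom$ together with $\sigma$-additivity. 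The only cosmetic difference is that the paper organizes the comparison through a fibre product of the two correspondence squares and stratifies an arbitrary scheme mapping to $\M_{\E}^{\alpha}\times_K\M_{\E}^{\gamma}$ rather than the stack itself, which does not change the substance.
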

\begin{proof}
The map $\oint^{(\bullet)}$ arises from the map of simplicial categories
\[S(\E) \longto \trl{L} S^{\pair 2}(\E),\]
which is induced by \cite{higherS}, $(4.8)$, as explained there. Passing to moduli stacks of objects, we obtain a map
\[\S(\E) \longto \trl{L} \S^{\pair 2}(\E).\]
Truncation to $\pi_0$ on the right yields the desired map of simplicial stacks
\[\oint^{(\bullet)}\!: \S(\E) \longto \ubar{K_0(S(\E))},\]
by Lemma \ref{K0(ratlSdot)}. Now, \cite{higherS}, Corollary 4.20, tells us that this discrete stack is in fact Segal. In particular, and more precisely, the diagram
\[\begin{tikzcd}
  \ubar{K_0(S_2(\E))} \ar[d, "{\sim}", "{(d_2,d_0)}"'] \ar[r, "{d_1}"] & \ubar{K_0(S_1(\E))} \\
  \ubar{K_0(S_1(\E))} \times_K \ubar{K_0(S_1(\E))} \ar[ur, dashed, "{\oplus}"'] & 
\end{tikzcd}\]
commutes. This means that applying $\T$ yields the (non-twisted) group ring $\T(\Spec K)[K_0(\E)]$. In order to understand the occurence of the twist, consider the square of correspondences
\[\begin{tikzcd}
  \S_1(\E) \times_K \S_1(\E) \ar[d, "{\oint\times_K\oint}"'] & \ar[l, "{(\del_2,\del_0)}"'] \S_2(\E) \ar[r, "{\del_1}"] & \S_1(\E) \ar[d, "{\oint}"] \\
  \ubar{K_0(S_1(\E))} \times_K \ubar{K_0(S_1(\E))} & \ar[l, "{(d_2,d_0)}", "{\sim}"'] \ubar{K_0(S_2(\E))} \ar[r, "{d_1}"'] & \ubar{K_0(S_1(\E))}
\end{tikzcd}\]
with the vertical identity maps omitted, where $\oint = \oint^{(1)}$ is the integral in degree $1$. In order to compare the two compositions, we take the fibre product on the left, as follows.
\[\begin{tikzcd}
  \S_1(\E) \times_K \S_1(\E) \ar[dd, "{\oint\times_K\oint}"'] & & \ar[ll, "{(\del_2,\del_0)}"'] \S_2(\E) \ar[r, "{\del_1}"] \ar[dl, dashed, "{\delta}"'] \ar[dd, "{\oint^{(2)}}"] & \S_1(\E) \ar[dd, "{\oint}"] \\
   & \ar[dl, phantom, "{\pullbackop}"' very near start] \ar[ul, "{\epsilon}", "{\sim}"'] \Z \ar[dr, "{\sigma}"'] & & \\
  \ubar{K_0(S_1(\E))} \times_K \ubar{K_0(S_1(\E))} & & \ar[ll, "{(d_2,d_0)}", "{\sim}"'] \ubar{K_0(S_2(\E))} \ar[r, "{d_1}"'] & \ubar{K_0(S_1(\E))}
\end{tikzcd}\]
Then we can compare the pullback-pushforward along the two edges of the (outer) square,
\begin{equation}\label{twist}
\oint_*(\del_1)_*(\del_2,\del_0)^* = (d_1)_*\oint^{(2)}_*\delta^*\epsilon^* = (d_1)_*\sigma_*\delta_*\delta^*\epsilon^*,
\end{equation}
by commutativity, and on the other hand, by compatibility with base change,
\begin{equation}\label{noTwist}
(d_1)_*(d_2,d_0)^*\Big(\oint\times_K\oint\Big)_* = (d_1)_*\sigma_*\epsilon^*.
\end{equation}
Of course, $\epsilon^*$ is an isomorphism, so the discrepancy is precisely $\delta_*\delta^*$. To analyze this, we use that $\oint$ is a graded map, meaning in particular that $\delta$ restricts to the fibres, say
\begin{equation}\label{deltaab}
\delta_{\alpha,\beta}\!: F_{\alpha,\beta}^{(2)} \longto F_{\alpha,\beta},
\end{equation}
of $\oint^{(2)}$ and $\oint\times_K\oint$ respectively, over $(\alpha,\beta) \in K_0(S_1(\E)) \times K_0(S_1(\E)) \isom K_0(S_2(\E))$. Further consider the fibres of $\delta_{\alpha,\beta}$ over $(A,B) \in F_{\alpha,\beta}(K) \subseteq \E^{\simeq} \times \E^{\simeq}$, which we denote by
\[\begin{tikzcd}
  E_{A,B} \ar[r, hook] \ar[d, "e"'] \ar[dr, phantom, "{\pullback}" very near start] & F_{\alpha,\beta}^{(2)} \ar[d, "{\delta_{\alpha,\beta}}"] \\
  \Spec(K) \ar[r, "{(A,B)}"'] & F_{\alpha,\beta}.
\end{tikzcd}\]
Then as a special case of \cite{Br}, Proposition 6.2, the $E_{A,B}$ are quotient stacks of the form
\[E_{A,B} \cong [\AA^{\rk\Ext^1(B,A)}_K/\Ga^{\rk\Hom(B,A)}],\]
as in our assumption \eqref{probability}. This result essentially goes back to \cite{Rie}. Since $\E$ is hereditary, this means that
\[e_*e^* = \zeta^{\chi^{\op}(\alpha,\beta)} \in\T(\Spec K),\]
for all $(A,B) \in F_{\alpha,\beta}(K)$. Now, for any scheme $X \xto{f} F_{\alpha,\beta}$, consider the stratification
\[X = \coprod_{d\in\NN} X_d\]
by the closed subschemes $X_d \subseteq X$, locally on $\Spec(R) \in \Aff_K$ defined by the equation
\[d = \rk_R \Hom((\pr_2 \circ f_R)(x),(\pr_1 \circ f_R)(x)), \mbox{ for } x \in X(R).\]
Then the fibres are constant above the (non-empty) strata, namely given by
\[(\delta_{\alpha,\beta} \times_{F_{\alpha,\beta}} X)^{-1}(X_d) \isom [\AA^{d-\chi^{\op}(\alpha,\beta)}_{X_d}/\GG_{a,X_d}^{d}] \isom E_{A,B} \times_K X_d,\]
for $(A,B) \in F_{\alpha,\beta}(K)$ with $\rk \Hom(B,A) = d$. Therefore, by $\sigma$-additivity of $\T$, we get
\[(\delta_{\alpha,\beta} \times_{F_{\alpha,\beta}} X)_*(\delta_{\alpha,\beta} \times_{F_{\alpha,\beta}} X)^* = (e \times_K \id_X)_*(e^* \btimes \id_X^*) = e_*e^* \otimes \id_{\T(X)} = \zeta^{\chi^{\op}(\alpha,\beta)}\cdot\id_{\T(X)}\]
for all $(A,B)$. Therefore indeed, for all $\alpha,\beta\in K_0(\E)$, the discrepancy is linear,
\[(\delta_{\alpha,\beta})_*(\delta_{\alpha,\beta})^* = \zeta^{\chi^{\op}(\alpha,\beta)}\cdot\id_{\T(F_{\alpha,\beta})} \in \End_{\T(\Spec K)}(\T(F_{\alpha,\beta})).\]
\end{proof}

\begin{rem}
As is evident from the definition (and used in the proof above), the integral in fact respects the $K_0(\E)$-gradings. Namely, by $\sigma$-additivity, the Hall algebra is graded by
\begin{equation}\label{K0grading}
\H(\E,\T) \isom \bigoplus_{\alpha\in K_0(\E)} \T(\M_{\alpha}), \mbox{ where } \M_{\alpha} := \oint^{-1}\alpha.
\end{equation}
Therefore, the integration map extends to the corresponding adic completions
\[\int_{\E} d\T\!: \hat{\H}(\E,\T) \longto \T(\Spec K)^{\pair{\zeta,\chi^{\op}}}[[K_0(\E)]]\]
along the augmentation ideals, generated by effective classes $\alpha\in K_0^+(\E)$. Note also that it surjects onto the twisted monoid algebra $\T(\Spec k)^{\pair{\zeta,\chi^{\op}}}[K_0^+(\E)]$ on the cone of effective classes in $K_0(\E)$. On the other hand, the integral does not distinguish between split and non-split extensions in $\E$ (up to scalar).

%\todo It is an interesting question what happens if we truncate only to $\Pi_1$ (or not at all) on both sides of the integral. The result will still not be $K_0^{\rep}(\Sta -)$-proper.
\end{rem}

\begin{ex}
For the transfer theory $\T = K_0^{\D}(\Sta -)$, we obtain the equivariant integral
\[\int_{\E}^{\D}\!: \H^{\D}(\E) \longto K_0^{\D}(\Sta K)^{\pair{\LL^{-1},\chi^{\op}}}[K_0(\E)].\]
In the case $\D = 0$, and thus $\T = K_0(\Sta -)$, this recovers the integration map
\[\int_{\E}\!: \H(\E) \longto K_0(\Sta K)^{\pair{\LL^{-1},\chi^{\op}}}[K_0(\E)],\ [\X] \longmapsto (\M_{\alpha} \to \Spec K)_*[\X]\cdot\alpha \mbox{ on } K_0(\Sta \M_{\alpha}),\]
from \cite{Joy2}, Theorem 6.4, or \cite{BehPo}, Definition 4.6 and Proposition 4.7. Let $\mu\!: K_0(\Sta K) \to Q$ be a motivic measure. Then we can extend scalars everywhere in $K_0(\Sta -)$ to produce a transfer theory $Q \otimes_{K_0(\Sta K)} K_0(\Sta -)$ valued in $Q$-modules. In this case, $\mu_{\T} = \mu$, and
\[\int_{\E} d\mu\!: \H(\E) \longto Q \otimes_{K_0(\Sta K)} \H(\E) \longto Q^{\pair{\mu(\LL^{-1}),\chi^{\op}}}[K_0(\E)],\ [\X\to\M_{\alpha}] \longmapsto \mu(\X)\cdot\alpha.\]
Similarly, for any of the equivariant motivic measures $\mu$ of Example \ref{eqmotmeas}, we obtain
\[\int_{\E}^{\D} d\mu\!: \H^{\D}(\E) \longto Q^{\pair{\mu(\LL^{-1}),\chi^{\op}}}[K_0(\E)].\]
If moreover $K=\FF_q$, and we apply $\T = \bbmu_{\#}^*\QQ[\pi_0(-)]$ instead, then by Remark \ref{motRHA}, we obtain
\[\sum\nolimits_{\E}\!: \Hall(\E) \longto \QQ^{\pair{q^{-1},\chi^{\op}}}[K_0(\E)],\ \mathbbm 1_A \longmapsto \sum_{B\in\oint(K)^{-1}[A]} \frac{\mathbbm 1_A(B)}{\#\Aut(B)}[B] = \frac{1}{\#\Aut(A)}[A].\]
This is Reineke's integration map, originating from \cite{Rei}, Lemma 6.1, and spelled out in \cite{Rei2}, Lemma 3.3. The compatibility
\[\begin{tikzcd}
  \H(\E) \ar[r, "{\int_{\E}}"] \ar[d, two heads] \mathclap{\smash{\mbox{\hspace{-3.8em}\raisebox{-3.7ex}{\scalebox{0.7}{\ensuremath{\bbmu_{\#}}}}}}} & K_0(\Sta K)^{\pair{\LL^{-1},\chi^{\op}}}[K_0(\E)] \ar[d, two heads, "{\mu_{\#}}"] \\
  \Hall(\E) \ar[r, "{\sum\nolimits_{\E}}"] & \QQ^{\pair{q^{-1},\chi^{\op}}}[K_0(\E)]
\end{tikzcd}\]
is a special case of Lemma \ref{countMeas}, which automatically implies that $\bbmu_{\#}^*\QQ[\pi_0(-)]$ satisfies the hypotheses of Theorem \ref{intthm} in the first place, which in turn are easily checked for $K_0^{\D}(\Sta -)$.

For $L|K$ finite, we can in fact extract from Lemma \ref{countMeas} (and Remark \ref{mutimesL}) a commutative square as follows,
\[\begin{tikzcd}
  \H(\E) \ar[r, "{\int_{\E}}"] \ar[d, two heads] \mathclap{\smash{\mbox{\hspace{-6em}\raisebox{-3.7ex}{\scalebox{0.7}{\ensuremath{\bbmu_{\#} \otimes_K L}}}}}} & K_0(\Sta K)^{\pair{\LL^{-1},\chi^{\op}}}[K_0(\E)] \ar[d, two heads, "{\mu_{\#} \otimes_K L}"] \\
  \Hall(\hat\E_L) \ar[r, "{\oint(L)_*}"] & \QQ^{\pair{q^{-[L:K]},\chi^{\op}}}[K_0(\E)].
\end{tikzcd}\]
\end{ex}

\begin{ex}
In the simplest case $\E = \vect_{\FF_q}$, Reineke's integration map is given by
\[\sum\nolimits_{\E}\!: \Hall(\E) \isoto \QQ^{\pair{q^{-1},\chi^{\op}}}[T],\ \mathbbm 1_{\FF_q^d} \longmapsto \frac{1}{(q^d-1)\cdots(q^d-q^{d-1})}T^d.\]
The right-hand side carries the product $T^c \cdot T^d = q^{-cd}T^{c+d}$. Of course, it is in fact further isomorphic to a non-twisted polynomial ring. Moreover, this generalizes to $\E=\Fil{[n]}{\FF_q}$ as
\[\sum\nolimits_{\E}\!: \Hall(\E) \isom \Hall(\vect_{\FF_q})^{\otimes n} \isoto \QQ^{\pair{q^{-1},\chi^{\op}}}[T_1,\ldots,T_n],\]
where the first isomorphism is given by $\mathbbm 1_{(V,F^{\bullet})} \mapsto \mathbbm 1_{F^1V} * \cdots * \mathbbm 1_{F^nV}$. In fact, in the limit,
\[\sum\nolimits_{\Fil{\ZZ}{\FF_q}}\!\!\!: \Hall(\Fil{\ZZ}{\FF_q}) \isom \colim \Hall(\vect_{\FF_q})^{\otimes n} \isoto \QQ^{\pair{q^{-1},\chi^{\op}}}[T_n \mid n\in\ZZ].\]
\end{ex}

\begin{ex}
Theorem \ref{intthm} also applies to the cohomological Hall algebra constructed by Kontsevich-Soibelman \cite{CoHA}. First, note that if $m > 0$, then the $\l$-adic cohomology groups
\[H^{\bullet}([\AA^n_K/\Ga^m],\Qlbar) = H^{\bullet}(\bee \Ga^m,\Qlbar) = \Sym(H^0(\Ga^m,\Qlbar)) \isom \Qlbar[t],\]
by base change and Borel's theorem (\cite{Beh}, Theorem 5.6)\footnote{This is Theorem 6.1.6 in the online version (available at \url{http://www.math.ubc.ca/~behrend/ladic.pdf}).}, whereas of course, when $m=0$, the cohomology $H^{\bullet}([\AA^n_K/\Ga^m],\Qlbar) = \Qlbar$ is trivial. However, for $m>0$, the map $\mu_{\T}([\AA^n_K/\Ga^m])$ is given by the inclusion $\Qlbar \into \Qlbar[t]$ in degree $0$, followed by the projection $t \mapsto 0$, and hence does not depend on $n,m$ whatsoever, as an element $\mu_{\T}([\AA^n_K/\Ga^m])\in H^{\bullet}(\Spec(K),\Qlbar) = \Qlbar$.

The original formulation in \cite{CoHA} uses $\ZZ$-coefficients, which is possible over $\CC$, by GAGA. As explained in \textit{op.cit.}, $\mathsection$2.4, the equivariant cohomology groups
\begin{equation}\label{eqCoh}
H^{\bullet}(\bee \GL_d, \Qlbar) \longinto H^{\bullet}(\bee \Gm^d, \Qlbar) \isom \Qlbar[x_1,\ldots,x_d], \mbox{ with } x_i \mbox{ in degree } 2,
\end{equation}
via pullback along the embedding $\Gm^d \into \GL_d$ as the diagonal torus, and the image of \eqref{eqCoh} is given by the symmetric polynomials. Then the cohomological Hall algebra carries a shuffle product, see \textit{op.cit.}, Theorem 2. More specifically, for $\E = \vect_K$, it is an exterior algebra (\textit{op.cit.}, $\mathsection$2.5). For degree reasons, as above, the integration map is simply the projection
\[\bigoplus_{d\geq 0} H^{\bullet}(\bee \GL_d, \Qlbar) \longto \Qlbar[T],\ \sum_{\nu\in\NN^d} a_{\nu}x_1^{\nu_1}\cdots x_d^{\nu_d} \longmapsto a_0\cdot T^d.\]
The above generalizes to the more sophisticated version of the cohomological Hall algebra, with coefficients in a Tannakian category $\V$, from \textit{op.cit.}, $\mathsection$3.3. Note that our assumption that $\V$ be symmetric monoidal is no hindrance; we merely impose it for convenience.
\end{ex}

\begin{rem}
We could equivalently twist the Hall algebra by
\[a \bullet b := \zeta^{-\chi^{\op}(\alpha,\beta)}(a*b), \mbox{ for } \deg(a) = \alpha,\ \deg(b) = \beta,\]
to obtain an algebra morphism to the non-twisted group ring. This is actually done in a different context, namely in the example of quiver representations, in order to remove the dependence of the Hall algebra on the orientation of the quiver. However, for our applications, the present statement of Theorem \ref{intthm} is more convenient.

Now suppose $\gamma\!: K_0(\E) \to \Gamma$ is a morphism of abelian groups which factors the Euler form,
\[\begin{tikzcd}
  K_0(\E) \times K_0(\E) \ar[r, "{\gamma\times\gamma}"] \ar[dr, "{\chi^{\op}}"'] & \Gamma \times \Gamma \ar[d, dashed, "{\exists\psi}"] \\
  & \ZZ.
\end{tikzcd}\]
Then we can compose the integration map with the induced algebra morphism,
\[\int_\E d\T[\gamma]\!: \H(\E,\T) \longto \T(\Spec K)^{\pair{\zeta,\chi^{\op}}}[K_0(\E)] \xto{\gamma_*} \T(\Spec K)^{\pair{\zeta,\psi}}[\Gamma].\]
\end{rem}

\begin{ex}
\begin{enumerate}[label=$(\arabic*)$]
\item If $\E = \vect_X$, where $X$ is a smooth projective curve of genus $g$ over $K$, we may take for $\Gamma$ the numerical Grothendieck group of $X$. Then the Chern character
\[\gamma\!: K_0(\E) \longonto \Gamma \cong \ZZ^2,\ [E] \longmapsto (\rk(E),\deg(E)),\]
factors the Euler form via the pairing
\[\psi\!: \ZZ^2 \times \ZZ^2 \longto \ZZ,\ (x,y) \longmapsto \det(x,y) + (1-g)x_1y_1,\]
as follows from the Riemann-Roch theorem. Consequently, we obtain an algebra morphism
\[\int_\E d\T[\gamma]\!: \H(\E,\T) \longto \T(\Spec K)^{\pair{\zeta,\psi}}[t_1^{\pm 1},t_2^{\pm 1}].\]
\item In the case of quiver representations $\E = \rep_K(Q)$, we may want to use the map
\[\gamma\!: K_0(\E) \longonto \ZZ^{Q_0},\ [A] \longmapsto \underline{\dim}(A),\]
to replace the Euler form by its usual explicit description. Namely,
\[\psi\!: \ZZ^{Q_0} \times \ZZ^{Q_0} \to \ZZ,\ (x,y) \mapsto \sum_{i\in Q_0} x_iy_i - \sum_{e\in Q_1} x_{s(e)}y_{t(e)}.\]
\item Similarly, by Example \ref{abEnvEX} \ref{abEnvEX3}, the category $\E = \Fil{\Lambda}{K}$ for a totally ordered set $\Lambda$ is again quasi-abelian and hereditary, because its abelian envelope $\Rep^f_{K}(\Lambda)$ is. Then Example \ref{rkdegEX} \ref{rkdegEX3} provides another instance of $\gamma$.
\item More exotic examples of hereditary categories include the ($\QQ$-linear) isogeny categories of $1$-motives \cite{1Mot}, resp. commutative algebraic groups \cite{Brion}, over a field.
\end{enumerate}
\end{ex}

\section{Slope filtrations and Harder-Narasimhan recursion}\label{sec10}

We now introduce the main examples and the appropriate context for the study of slope filtrations we are interested in, following \cite{An} (also cf. \cite{CC}). First, let us state the following useful characterization of quasi-abelian categories via torsion pairs.

\begin{prop}[\cite{BvdB}, Proposition B.3]\label{abEnv}
An additive category $\E$ with (co-)kernels is quasi-abelian if and only if there exists a fully faithful embedding $\E \into \A$ into an abelian category, and a full subcategory $\A_{\tors} \subseteq \A$, such that $\Hom(T,E) = 0$ for all $T\in\A_{\tors}$ and $E\in\E$, and such that for every $A\in\A$, there exists a short exact sequence in $\A$ of the form
\begin{equation}\label{torSeq}
0 \to T \to A \to E \to 0,
\end{equation}
with $T\in\A_{\tors}$ and $E\in\E$. In this case, \eqref{torSeq} is unique, and the induced functors $A \mapsto T$, resp. $A \mapsto A/T$, are right, resp. left, adjoint to the respective embeddings.
\end{prop}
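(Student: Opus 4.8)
The statement is a torsion-theoretic characterization, so the plan is to prove the two implications separately and then deduce the ``in this case'' clauses from the general torsion-pair formalism. Unwinding the definition, for a full additive subcategory $\E$ of an abelian category $\A$ carrying a class $\A_{\tors}$ as in the statement, ``$\E$ is quasi-abelian'' amounts to requiring that the strict monomorphisms of $\E$ be stable under pushout and the strict epimorphisms under pullback.

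For ``$\Leftarrow$'', suppose $\E \into \A$ and $\A_{\tors}$ are given. The two facts that do all the work are: (i) $\E$ is closed under subobjects in $\A$ --- if $A \into E$ with $E \in \E$, the sequence $0 \to T \to A \to E' \to 0$ produces a map $T \to E$ that vanishes by hypothesis, so $T \into A$ is zero and $A \cong E' \in \E$; and (ii) $\E$ is closed under extensions in $\A$ --- if $0 \to E_1 \to A \to E_2 \to 0$ is exact with $E_i \in \E$, then $T \to A \to E_2$ is zero, so $T \into A$ factors through $E_1$, and the resulting map $T \to E_1$ vanishes, whence $T = 0$ and $A \in \E$. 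Using (i), kernels in $\E$ are computed in $\A$, and (together with the fact that the cokernel in $\E$ is the reflection of the cokernel in $\A$) one checks that the kernel--cokernel pairs of $\E$ are exactly the short exact sequences of $\A$ whose three terms all lie in $\E$. The two stability requirements are then immediate: a strict epimorphism $B \onto C$ of $\E$ is an epimorphism of $\A$, and its pullback along any $C' \to C$ in $\E$ is again an epimorphism of $\A$ whose source $B \times_C C'$ is a subobject of $B \oplus C'$, hence lies in $\E$ by (i), so it is a strict epimorphism of $\E$; dually, a strict monomorphism $A \mono B$ of $\E$ with cokernel $C \in \E$ pushes out along any $A \to A'$ in $\E$ to a monomorphism $A' \mono B'$ of $\A$ with cokernel $C$, and $B' \in \E$ by (ii), so the pushout is computed in $\A$ and $A' \mono B'$ is a strict monomorphism of $\E$.

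For ``$\Rightarrow$'', the plan is to embed $\E$ into the appropriate abelian envelope --- the left heart $\mathcal{LH}(\E)$ (or the right heart $\mathcal{RH}(\E)$, whichever realizes $\E$ as a torsion-free class), built by Schneiders \cite{Schn} as the heart of one of the two natural $t$-structures on $D(\E)$. Explicitly one may model $\mathcal{LH}(\E)$ by the monomorphisms $f\colon E^{-1} \mono E^0$ of $\E$, viewed as two-term complexes in degrees $-1$ and $0$, with $\E$ embedded via $E \mapsto [0 \to E]$; that $\mathcal{LH}(\E)$ is abelian and that this embedding is fully faithful and exact is the substance of \cite{Schn}. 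One then takes $\A_{\tors}$ to be the full subcategory of objects $[E^{-1} \xto{f} E^0]$ with $f$ also an epimorphism of $\E$; since $\Hom_{\mathcal{LH}(\E)}([E^{-1} \xto{f} E^0], E) \cong \Hom_{\E}(\coker_{\E} f, E)$, these are precisely the objects admitting no nonzero morphism into $\E$, and the canonical decomposition of a general object is the exact sequence
\[0 \longto [E^{-1} \mono \im f] \longto [E^{-1} \xto{f} E^0] \longto [0 \to \coker_{\E} f] \longto 0\]
with left term in $\A_{\tors}$ and right term in $\E$, as required. I expect this direction --- erecting the abelian envelope and verifying it has all these properties --- to be the main obstacle; the alternative of building $\mathcal{LH}(\E)$ by hand from two-term complexes modulo strict quasi-isomorphism trades the citation for a moderate amount of bookkeeping.

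Finally, given the torsion-pair structure, the remaining assertions are formal. Uniqueness: if $0 \to T \to A \to E \to 0$ and $0 \to T' \to A \to E' \to 0$ are two presentations as in the statement, then $T \to A \to E'$ and $T' \to A \to E$ both vanish, giving $T \subseteq T'$ and $T' \subseteq T$ as subobjects of $A$, hence $T = T' =: T(A)$; functoriality of $A \mapsto T(A)$ and $A \mapsto A/T(A)$ follows since any $A \to B$ sends $T(A)$ into $T(B)$ (as $T(A) \to B \to B/T(B)$ is zero). The adjunctions come from the defining vanishing: for $T_0 \in \A_{\tors}$ one has $\Hom_{\A}(T_0, A) \cong \Hom_{\A_{\tors}}(T_0, T(A))$ because $\Hom(T_0, A/T(A)) = 0$, and for $E_0 \in \E$ one has $\Hom_{\A}(A, E_0) \cong \Hom_{\E}(A/T(A), E_0)$ because $\Hom(T(A), E_0) = 0$, exhibiting $A \mapsto T(A)$ and $A \mapsto A/T(A)$ as right, resp. left, adjoint to the two inclusions.
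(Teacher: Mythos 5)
Your proof is correct. The paper itself offers no argument for this proposition --- it is quoted verbatim from Bondal--Van den Bergh with only the citation --- and what you have written is essentially the standard proof from that reference: the ``if'' direction via closure of $\E$ under subobjects and extensions in $\A$ (so that kernels, pullbacks of strict epis, and pushouts of strict monos are all computed in $\A$ and land back in $\E$), the ``only if'' direction via Schneiders' left heart with $\A_{\tors}$ the objects represented by maps that are simultaneously mono and epi, and the uniqueness and adjunction clauses by the usual torsion-pair formalism. The one step that deserves the care you gave it is the identification of the kernel--cokernel pairs of $\E$ with the short exact sequences of $\A$ having all terms in $\E$, which is what makes the pullback/pushout stability arguments close up; you handle this correctly via the observation that cokernels in $\E$ are reflections of cokernels in $\A$.
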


\begin{defn}
A morphism $f\!: A \to B$ with $\ker(f) = \coker(f) = 0$ is called a pseudo-isomorphism. Equivalently, $f$ is both a monomorphism and an epimorphism in $\E$.
\end{defn}

\begin{rem}
There already is a notion of pseudo-isomorphism for the category of finitely generated modules over an integral domain $K$, namely that all localizations of the (co-)kernel at prime ideals of $K$ of height $\leq 1$ vanish.

Of course, in an abelian category, a pseudo-isomorphism in our sense is just the same as an isomorphism. On the other hand, if we restrict to the quasi-abelian subcategory of torsion-free $K$-modules, the above condition implies ours.
\end{rem}

\begin{ex}\label{abEnvEX}
\begin{enumerate}[label=$(\arabic*)$]
\item Let $K$ be a field, and $X$ a smooth projective curve over $K$. Set $\E = \vect_X$. Then
\[\O_X \longonto \O_X(1)\]
is a pseudo-isomorphism which is not an isomorphism (of course, $\Hom(\O_X(1),\O_X)=0$). Thus $\E$ is not abelian. In fact, its abelian envelope $\A$ as in Proposition \ref{abEnv} is given by the category of coherent sheaves on $X$, and $\A_{\tors}$ is the full subcategory of torsion sheaves. Note, however, that this does not generalize to higher dimensional schemes $X$.
\item Let $Q$ be a quiver, and let $\E$ be a quasi-abelian category with abelian envelope $\A$. We denote by $\Rep_{\E}(Q) = \Fun(Q,\E)$ the category of representations of $Q$ in $\E$. This is again a quasi-abelian category, with abelian envelope $\Rep_{\A}(Q)$.
\item\label{abEnvEX3} Let $\E = \Fil{\Lambda}{\B}$ be the category of $\Lambda$-filtered objects in an abelian category $\B$, where $\Lambda$ is a totally ordered set. For $\lambda\in\Lambda$, consider the filtration $F_{\lambda}^{\mu}V = \left\{\begin{array}{ll} V & \mbox{ if } \mu \leq \lambda \\ 0 & \mbox{ if } \mu > \lambda \end{array}\right.$ on $V\in\B$. If $\lambda < \lambda'$, then
\[(V,F^{\bullet}_{\lambda}) \xto{\id_V} (V,F^{\bullet}_{\lambda'})\]
is a pseudo-isomorphism, but its ``inverse'' does not respect filtrations, hence it is not an isomorphism. Assume $\Lambda = [n] = \{0,\ldots,n\}$, and let $\A = \Rep_{\B}(A_n)$ be the category of representations in $\B$ of the linearly oriented quiver $A_n$ on $n$ vertices.

Then $\E$ embeds into $\A$ as $(V,F^{\bullet}) \mapsto (F^nV \into \cdots \into F^1V)$, and the torsion subcategory is given by $\A_{\tors} = \{V_1 \to \cdots \to V_{n-1} \to 0\}$. Indeed, any morphism
\[\begin{tikzcd}
    V_1 \ar[r] \ar[d] & \cdots \ar[r] & V_{n-1} \ar[r] \ar[d] & 0 \ar[d] \\
    W_1 \ar[r, hook] & \cdots \ar[r, hook] & W_{n-1} \ar[r, hook] & W_n.
\end{tikzcd}\]
must be zero. On the other hand, let $A \in \Rep_{\B}(A_n)$. Then taking successive pullbacks yields
\[\begin{tikzcd}
    V_1 \ar[r] \ar[d, hook] \ar[dr, phantom, "{\pullback}" very near start] & \cdots \ar[r] & V_{n-1} \ar[r] \ar[d, hook] \ar[dr, phantom, "{\pullback}" very near start] & 0 \ar[d, hook] \\
    A_1 \ar[r, "{f_1}"] & \cdots \ar[r, "{f_{n-2}}"] & A_{n-1} \ar[r, "{f_{n-1}}"] & A_n
\end{tikzcd}\]
so that $V_i \isom \ker(f_{n-1} \circ\cdots\circ f_i)$ for all $i = (n-1),\ldots,1$. Hence indeed, the cokernel
\[\begin{tikzcd}
	\im(f_{n-1} \circ\cdots\circ f_1) \ar[r, hook] & \cdots \ar[r, hook] & \im(f_{n-1} \circ f_{n-2}) \ar[r, hook] & \im(f_{n-1})
\end{tikzcd}\]
lies in $\E$. By taking the colimit over $n\in\NN$, we obtain the analogous statement for
\[\Fil{\NN}{\B} \longinto \Rep^f_{\B}(A_{\NN}) := \Fun^f(A_{\NN},\B).\]
In fact, a similar argument works for any totally ordered set $\Lambda$, which can be filtered by its finite totally ordered subsets. For the latter, in turn, the torsion subcategories of the corresponding quiver representation categories are given by the objects sending all maximal elements to $0$.
\end{enumerate}
\end{ex}

\begin{rem}\label{pqab}
In what follows, $\E$ is either quasi-abelian or what by our conventions would have to be called proto-quasi-abelian, that is, proto-abelian in the sense of Andr\'e \cite{An}, Definition 1.2.3. Instead, we adopt the following terminology.
\end{rem}

\begin{defn}
We define a slope category, resp. additive ($K$-linear) slope category, to be a category as in Remark \ref{pqab}, resp. a ($K$-linear) quasi-abelian category, $\E$ equipped with a rank function. Here, a rank function on $\E$ is a map $\rk\!: \pi_0(\E^{\simeq}) \to \NN$, with $\rk(A) = 0 \iff A=0$, which extends to a morphism on the Grothendieck group
\[\rk\!: K_0(\E) \to \ZZ.\]
A degree function on $\E$ with values in a totally ordered abelian group $\Lambda$ is a group morphism
\[\deg\!: K_0(\E) \to \Lambda,\]
with $\deg(A) \leq \deg(B)$ for all pseudo-isomorphisms $A \to B$ in $\E$.
\end{defn}

From now on, we fix a $K$-linear slope category $\E$. Note that this is in particular a finiteness condition, as the rank bounds the length of flags in $\E$. In fact, if $\E$ is abelian, then it is both artinian and noetherian.

\begin{rem}\label{K0(abEnv)}
It follows from the construction that $K_0(\A) = K_0(\E)$ in Proposition \ref{abEnv}. Hence, the rank function formally extends to $\A$ (albeit not as a rank function, per se). One might expect that $\A_{\tors} = \{T\in\A \mid \rk(T) = 0\}$. This is the case in Example \ref{rkdegEX} \ref{rkdegEX1}--\ref{rkdegEX3}, but not in general (consider either of the trivial torsion pairs on $\A$).
\end{rem}

\begin{defn}
Let $\deg\!: K_0(\E) \to \Lambda$ be a degree function. The slope of $A\in\E$ is
\[\mu(A) = \frac{\deg(A)}{\rk(A)} \in \ZZ^{-1}\Lambda := \Lambda_{\QQ} \disj \{\infty\},\]
where $\Lambda_{\QQ} = \Lambda \otimes_{\ZZ} \QQ \isom (\ZZ\minus\{0\})^{-1}\Lambda$. The set $\ZZ^{-1}\Lambda$ carries the total order induced by
\[\frac{\lambda}{n} \leq \frac{\mu}{m} \iff m\lambda \leq n\mu \mbox{ for } m,n\in\NN, \mbox{ and } \Lambda_{\QQ} < \infty.\]
An object $A\in\E$ is called (semi-)stable, if for all non-trivial subobjects $0 \subsetneq B \subsetneq A$,
\[\mu(B)\ _{\operatorname{(}}\!\!\leq_{\operatorname{)}} \mu(A).\]
The full subcategories of semistable, resp. stable, objects $A\in\E$ of slope $\mu(A)\in\{\lambda,\infty\}$ are denoted by $\ss\E_{\lambda}$ and $\s\E_{\lambda}$, respectively.
\end{defn}

\begin{rem}\label{slopeprops}
Many of the usual properties hold in this generality. For example, the slope is convex on short exact sequences, and $\Hom(\ss\E_{\lambda},\ss\E_{\mu}) = 0$ for all $\lambda > \mu$ (\cite{An}, Lemma 1.3.8)\footnote{This is Lemma 3.2.3 in the arXiv version (\url{https://arxiv.org/pdf/0812.3921v2.pdf}).}.
\end{rem}

\begin{ex}\label{rkdegEX}
\begin{enumerate}[label=$(\arabic*)$]
\item\label{rkdegEX1} Let $X$ be a smooth projective (geometrically connected) curve over $K$, and $\E = \vect_X$ the category of vector bundles on $X$. For $E\in\E$, of course $\rk(E) = \dim_{k(x)}(E \otimes k(x))$ means the dimension of the fibre at a point $x\in X$.

The usual notion of degree of a divisor induces $\deg\!: \Pic(X) \to \ZZ$, which in turn extends to $K_0(\E)$ via $\deg(E) := \deg(\det E)$. Note that the pseudo-isomorphism $\O_X \onto \O_X(1)$ is consistent with the definition of a degree function.

\item Let $Q = (Q_0,Q_1,s,t)$ be a finite, connected quiver. Let $\E = \rep_K(Q) := \Rep_{\vect_K}(Q)$ be the category of finite dimensional representations of $Q$ over $K$, where $K$ may also be $\FF_1$. Then $\E$ is endowed with
\[\rk\!: \pi_0(\E^{\simeq}) \longto \NN,\ A \longmapsto \sum_{i\in Q_0} \dim_K(A_i).\]
The Grothendieck group of $\E$ can be identified with the dimension vectors
\[\ubar{\dim}\!: K_0(\E) \isoto \ZZ^{Q_0},\ A \longmapsto (\dim(A_i))_{i\in Q_0}.\]
Therefore, since $\E$ is abelian, the degree functions on $\E$ are precisely given by
\[\Hom(K_0(\E),\Lambda) \isoto \Lambda^{Q_0},\ \deg_{\theta} \mapsto \theta, \mbox{ with } \deg_{\theta}(A) = \sum_{i\in Q_0} \theta_i\dim(A_i).\]
\item\label{rkdegEX3} By Example \ref{abEnvEX}, we can try to descend $(b)$ to $\Fil{[n]}{k}$. It turns out that the cone of degree functions in $\Hom(K_0(\Fil{[n]}{k}),\Lambda)$ is given by $\{\deg_{\theta} \mid \theta \in \Lambda_+^{n-1} \times \Lambda\}$. The choice of $\theta$ is then essentially equivalent to an embedding into $\E = \Fil{\Lambda}{k}$, with its ``universal'' degree function
\[\deg_\bullet\!: K_0(\E) \to \Lambda,\ (V,F^\bullet) \mapsto \sum_{\lambda\in\Lambda} \lambda\dim(\gr^{\lambda}V) = \sum_{i=1}^r \lambda_i\dim(\gr^{\lambda_i}V),\]
where $\gr^{\lambda}V = F^{\lambda}V/F^{\lambda+1}V$ are the graded pieces and $\lambda_i$ the jumps of the filtration. Indeed, under the other natural identification $\ubar{\dim}(\gr^{\bullet})\!: K_0(\E) \isoto \Lambda^n$, we get
\begin{equation}\label{thetaIso}
\Lambda_+^{n-1} \times \Lambda \isoto \{\eta\in\Lambda^n \mid \eta_1 \geq\cdots\geq \eta_n\},\ \theta \longmapsto (\theta_1 + \cdots + \theta_n, \ldots, \theta_{n-1} + \theta_n, \theta_n).
\end{equation}
\end{enumerate}
\end{ex}

For many more examples, see the survey article \cite{An} by Andr\'e.

\begin{prop}
There exists a (unique, functorial) filtration \[F^{\bullet}\!: \Lambda_{\QQ}^{\op} \times \E \to \E,\ (\lambda,A) \mapsto F^{\lambda}A,\] such that for each $A\in\E$, the associated flag
\begin{equation}\label{HNflag}
0 \subseteq F^{\lambda_1}A \subseteq \cdots \subseteq F^{\lambda_n}A = A
\end{equation}
is uniquely determined by having successive quotients semistable of decreasing slopes
\[\mu(\gr^{\lambda_1}A) = \lambda_1 > \cdots > \lambda_n = \mu(\gr^{\lambda_n}A).\]
Conversely, $F^{\bullet}$ uniquely determines the degree function via
\[\deg(A) = \sum_{\lambda\in\Lambda_{\QQ}} \lambda\rk(\gr^{\lambda}A).\]
\end{prop}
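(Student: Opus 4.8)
The plan is to prove this exactly as one proves the Harder-Narasimhan theorem in the classical cases (and as in \cite{An}): for each nonzero $A\in\E$, first produce a maximal destabilizing subobject, then peel it off and induct on the rank to obtain the flag \eqref{HNflag}, then deduce uniqueness and functoriality from the $\Hom$-vanishing recorded in Remark~\ref{slopeprops}, and finally read the degree formula off the flag. Throughout, ``subobject'' means strict subobject, and I would freely use the standard exactness properties of quasi-abelian categories (\cite{Schn}) --- in particular that strict monos and strict epis are stable under pullback and pushout, so that images and preimages of strict subobjects behave as expected.

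Fix $A\neq 0$. Every nonzero subobject $B\subseteq A$ satisfies $1\le\rk(B)\le\rk(A)$, since $\rk$ is additive on $0\to B\to A\to A/B\to 0$ and vanishes only on $0$; in particular flags in $A$ have length at most $\rk(A)$. The first step is to show that $\mu_{\max}(A):=\sup\{\mu(B)\mid 0\neq B\subseteq A\}$ is finite and attained (the single non-formal point; see below), and then to pick among the subobjects of slope $\mu_{\max}(A)$ one, $A_1$, of maximal rank. Using convexity of $\mu$ on short exact sequences together with $\Hom(\ss\E_{\lambda},\ss\E_{\mu})=0$ for $\lambda>\mu$, the usual ``sum of two subobjects'' argument then gives that $A_1$ is semistable of slope $\mu_{\max}(A)$, that every subobject of slope $\ge\mu_{\max}(A)$ is contained in $A_1$ (so $A_1$ is unique), and that $\mu_{\max}(A/A_1)<\mu_{\max}(A)$ whenever $A_1\neq A$. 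I set $\lambda_1:=\mu_{\max}(A)$ and $F^{\lambda_1}A:=A_1$.

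Next I would induct on $\rk(A)$. If $A$ is semistable there is nothing to do; otherwise $A':=A/A_1$ has strictly smaller rank, hence by induction carries an HN flag with top slope $\mu_{\max}(A')=\lambda_2<\lambda_1$; pulling this flag back along the strict epimorphism $A\onto A'$ and prepending $A_1$ produces the flag \eqref{HNflag} with the asserted semistable subquotients of strictly decreasing slopes, and one extends to all of $\Lambda_{\QQ}$ by setting $F^{\lambda}A:=F^{\lambda_i}A$ for the least jump $\lambda_i\ge\lambda$ (and $F^{\lambda}A:=0$ for $\lambda>\lambda_1$). Uniqueness is again by induction on $\rk(A)$: for any flag with semistable subquotients of slopes $\mu_1>\cdots>\mu_n$, the $\Hom$-vanishing forces $\mu_1=\mu_{\max}(A)$ and forces the first step to coincide with $A_1$, and one descends to $A/A_1$. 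Functoriality in $A$ uses the same vanishing: for $f\colon A\to B$ the composite $F^{\lambda}A\to B\onto\gr^{\mu}B$ factors through $\Hom$'s of semistables of slopes $\ge\lambda$ against a semistable of slope $<\lambda$, hence is zero for every $\mu<\lambda$, so $f(F^{\lambda}A)\subseteq F^{\lambda}B$; together with the tautological inclusions $F^{\mu}A\subseteq F^{\lambda}A$ for $\lambda\le\mu$ this exhibits $F^{\bullet}$ as a functor $\Lambda_{\QQ}^{\op}\times\E\to\E$. Finally, additivity of $\rk$ and $\deg$ along the flag gives $\deg(A)=\sum_i\deg(\gr^{\lambda_i}A)=\sum_i\lambda_i\rk(\gr^{\lambda_i}A)$, using $\rk(\gr^{\lambda_i}A)>0$ and hence $\mu(\gr^{\lambda_i}A)=\lambda_i$; since $\gr^{\lambda}A=0$ for $\lambda\notin\{\lambda_1,\dots,\lambda_n\}$ this is exactly $\sum_{\lambda\in\Lambda_{\QQ}}\lambda\rk(\gr^{\lambda}A)$, and as the right-hand side depends only on $F^{\bullet}$ and the fixed rank function, it pins $\deg$ down on objects, hence on $K_0(\E)$ since every class is a difference of effective ones.

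The hard part is exactly the finiteness and attainedness of $\mu_{\max}(A)$. The slope-category axioms bound the \emph{lengths} of flags in $\E$ through $\rk$, but say nothing a priori about the \emph{degrees} of subobjects of a fixed object, i.e.\ about $\mu_{\max}(A)$ being finite. In the concrete slope categories this is either immediate --- for $\rep_K(Q)$ or for $\Fil{\Lambda}{K}$ the degree of a subobject is visibly bounded in terms of the given combinatorial data --- or it is a classical boundedness statement (Grothendieck's lemma for $\vect_X$, and its analogue for isocrystals). In the abstract framework this boundedness is part of Andr\'e's hypotheses on a slope category, so in practice I would isolate it as a lemma, verify it in the cases of interest, and then run the rest of the argument exactly as above.
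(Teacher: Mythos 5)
Your argument is the standard Harder--Narasimhan induction that the paper itself invokes by citing \cite{An}, Theorem 1.4.7, so the two proofs coincide in substance. You are also right to single out the finiteness and attainedness of $\mu_{\max}(A)$ as the one non-formal input: it is part of Andr\'e's axioms for a degree function rather than a consequence of the conditions recorded here, so isolating it as a lemma to be verified in each example is exactly the right move.
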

\begin{proof}
This is proved by a standard argument, originally due to Harder-Narasimhan \cite{HN} in the context of vector bundles. The general case is \cite{An}, Theorem 1.4.7.
\end{proof}

\begin{ex}\label{HNtriv}
The HN-filtration on $\E = \Fil{\Lambda}{K}$ from Example \ref{rkdegEX} $(c)$ is rather tautological. Namely, $(V,F^{\bullet})\in\E$ is semistable if and only if $F^{\bullet}$ has precisely one jump. However, note that $\deg_\bullet$ extends to the ``$K$-rational'' category $\Fil{\Lambda}{L|K} = \Fil{\Lambda}{L} \times_{\vect_L} \vect_K$. Then for $L \neq K$, the slope filtration on $\Fil{\Lambda}{L|K}$ is not at all trivial.
\end{ex}

\begin{rem}\label{HNprop}
Note that there are some slight subtleties in case that $\Lambda$ is not discrete, e.g. in the definition of the jumps $\lambda_1 > \cdots > \lambda_n$ of $F^{\bullet}$ in the HN-flag \eqref{HNflag}. For an explanation of the details, see \cite{An}, Definition 1.4.1.

Labeling the filtration by the slopes (rather than to settle for the individual flags \eqref{HNflag}) is responsible for the functoriality of $F^{\bullet}$. This observation is due to Faltings.

Keeping in mind Remark \ref{K0(abEnv)}, we can extend $F^{\bullet}$ to the abelian envelope $\A$ of $\E$, assuming
\[\A_{\tors} = \{T\in\A \mid \rk(T) = 0\}.\]
Namely, if $A\in\A$ has torsion part $T\subseteq A$, we can pull back the filtration $F^{\lambda}A := \rho^{-1}F^{\lambda}(A/T)$ under the projection $\rho\!: A \onto A/T$. Then the HN-flag of $A$ is given by
\[0 \subseteq T = F^{\infty}A \subseteq F^{\lambda_1}A \subseteq \cdots \subseteq F^{\lambda_n}A = A.\]
This respects the decreasing slope condition, since $\gr^{\lambda}A = \gr^{\lambda}(A/T)$, and by assumption,
\[\mu(\gr^{\infty}A) = \infty > \mu(\gr^{\lambda_1}A) = \lambda_1 > \cdots > \lambda_n = \mu(\gr^{\lambda_n}A).\]
The only concession to make is that the filtration is no longer separated (and lives on $\ZZ^{-1}\Lambda$).

Usually, the Harder-Narasimhan filtration is related to the Jordan-H\"older filtration, as follows. If $\ss\E_{\lambda}$ is abelian (thus, artinian and noetherian), with simple objects precisely $\s\E_{\lambda}$, then the Jordan-H\"older filtration on $\ss\E_{\lambda}$ yields a refinement of the HN-filtration on $\E$.

This is the case in Example \ref{rkdegEX}, $(a)$-$(c)$. For general criteria, see \cite{An}, Corollary 1.4.10, as well as \textit{op.cit.}, Proposition 2.2.11.
\end{rem}

\begin{ex}\label{isocEX}
Let $L|K$ be a finite, totally ramified extension of complete discretely valued fields of characteristic $(0,p)$ with perfect residue field. The examples over $K=\QQ_p$ are precisely $L = K(\mu_{p^n}(\bar K))$, for $n\in\NN$. Let $\sigma$ lift the Frobenius of the residue field to $K$.

An isocrystal over $K$ is a pair $(V,\phi)$ with $V\in\vect_K$ and $\phi$ a $\sigma$-semilinear automorphism
\[\phi\!: V \otimes_{K,\sigma} K \isoto V.\]
Denote by $\isoc_K$ their category. The fibre functor $\omega\!: \isoc_K \to \vect_K$ is an isofibration, by Example \ref{isofibEX} $(c)$. Consider the category of filtered isocrystals over $L|K$, that is,
\[\Fil{\ZZ}{L|K} \times_{\vect_K} \isoc_K = \Fil{\ZZ}{L} \times_{\vect_L} \isoc_K.\]
This category carries two natural degree functions. On the one hand, coming from $\isoc_K$,
\[\deg_{\sigma}(V,F^{\bullet},\phi) := -\val_p(\det\phi).\]
On the other hand, the function $\deg_{\bullet}$ from Example \ref{rkdegEX} $(c)$. We are interested in their sum,
\[\deg = \deg_{\sigma} + \deg_{\bullet}\]
particularly because Colmez and Fontaine \cite{CF} show that there is an equivalence of categories
\[\ss{(\Fil{\ZZ}{L} \times_{\vect_L} \isoc_K)}_{0} \isoto \rep_K^{\cris}(G_L).\]
Here, $\rep_K^{\cris}(G_L)$ is the category of crystalline representations over $K$ of the absolute Galois group $G_L$ of $L$. They are particularly well-behaved objects of study in the local Langlands program.

Filtered isocrystals arise in $p$-adic Hodge theory as crystalline cohomology groups (which become $K$-vector spaces after inverting $p$) of smooth projective varieties over $L$, whose reductions modulo $p$ are smooth projective over the residue field. They carry a natural Frobenius action, and obtain a Hodge filtration (over $L$) by comparison with de Rham cohomology.
\end{ex}

We now discuss the generalization of the degree function in Example \ref{isocEX}. Fix a $K$-linear exact category $\B$. Let $\nu\!: \D \to \B$ be an exact functor with values in a slope category $\D$, and let $\omega\!: \E \to \B$ be an exact, $K$-linear isofibration, such that $\rk_{\D}$ and $\rk_{\E}$ factor through $K_0(\B)$. It will be convenient to note that under these circumstances, if $\D=0$, then so is $\B$.

\begin{lemma}\label{doubledeg}
Let $\deg_{\D}$ and $\deg_{\E}$ be degree functions with values in $\Lambda$ on $\D$ and $\E$, respectively. Then their sum is a well-defined degree function
\[\deg = \deg_{\D} + \deg_{\E}\!: K_0(\D \times_{\B} \E) \longto \Lambda.\]
\end{lemma}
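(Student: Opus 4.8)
The statement asserts that for degree functions $\deg_{\D}\colon K_0(\D) \to \Lambda$ and $\deg_{\E}\colon K_0(\E) \to \Lambda$, the sum $\deg_{\D} + \deg_{\E}$ descends to a degree function on $K_0(\D \times_{\B} \E)$. There are two things to verify: first, that the expression is \emph{well-defined}, i.e.\ actually factors through $K_0(\D \times_{\B} \E)$; second, that it is \emph{monotone along pseudo-isomorphisms}, i.e.\ $\deg(A) \le \deg(B)$ whenever $A \to B$ is a pseudo-isomorphism in $\D \times_{\B} \E$.

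For well-definedness, the plan is to use that $\rk_{\D}$ and $\rk_{\E}$ both factor through $K_0(\B)$, together with the fact that $\D$ is a slope category so that $\deg_{\D}$ (being a group morphism $K_0(\D) \to \Lambda$) is determined on $K_0(\D)$, and similarly $\deg_{\E}$ on $K_0(\E)$. The fibre product category $\D \times_{\B} \E$ comes with projections $p_{\D}$ and $p_{\E}$ to $\D$ and $\E$, inducing $K_0(p_{\D})\colon K_0(\D\times_{\B}\E)\to K_0(\D)$ and $K_0(p_{\E})\colon K_0(\D\times_{\B}\E)\to K_0(\E)$; then one simply \emph{defines} $\deg := \deg_{\D}\circ K_0(p_{\D}) + \deg_{\E}\circ K_0(p_{\E})$, which is automatically a group morphism. (The remark "if $\D = 0$ then $\B = 0$" is there to make the degenerate case $\D=0$ consistent — then $\D\times_{\B}\E = \E$ and $\deg = \deg_{\E}$; one should observe this degenerate-compatibility explicitly.) So the only real content is the monotonicity.

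For monotonicity, I would proceed as follows. Let $f\colon A \to B$ be a pseudo-isomorphism in $\D\times_{\B}\E$, so $\ker(f)=\coker(f)=0$ there. The key step is to show that $p_{\D}(f)$ is a pseudo-isomorphism in $\D$ and $p_{\E}(f)$ is a pseudo-isomorphism in $\E$, so that $\deg_{\D}(p_{\D}(A)) \le \deg_{\D}(p_{\D}(B))$ and $\deg_{\E}(p_{\E}(A)) \le \deg_{\E}(p_{\E}(B))$ individually, and their sum gives the claim. The subtlety is that kernels and cokernels in a $2$-fibre product $\D\times_{\B}\E$ need not be computed componentwise in general; however, here $\omega\colon \E\to\B$ is an \emph{exact isofibration} and $\nu\colon\D\to\B$ is \emph{exact}, and in this situation the forgetful/projection functors from $\D\times_{\B}\E$ are exact and reflect mono- and epimorphisms — i.e.\ a morphism in $\D\times_{\B}\E$ is mono (resp.\ epi) iff both its projections to $\D$ and $\E$ are. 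Granting that, $f$ being both mono and epi in $\D\times_{\B}\E$ forces $p_{\D}(f)$ and $p_{\E}(f)$ to be both mono and epi, hence pseudo-isomorphisms. Then apply the defining monotonicity property of $\deg_{\D}$ and $\deg_{\E}$ and add.

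The main obstacle I anticipate is precisely the bookkeeping around (co)kernels in the fibre product: one must argue carefully that, because $\omega$ is an isofibration and both functors are exact, mono/epi are detected projection-wise. This is where one uses that $\omega$ being an isofibration lets us lift isomorphisms and hence control the interaction of the limit with kernels/cokernels; concretely, an object of $\D\times_{\B}\E$ is a triple $(D, E, \varphi)$ with $\varphi\colon \nu(D)\isoto\omega(E)$ in $\B$, and a morphism is a pair $(d,e)$ compatible with the $\varphi$'s. A morphism is mono iff its kernel vanishes; I would compute the kernel of $(d,e)$ as the pullback $\ker(d)\times_{?}\ker(e)$ in $\B$ — using exactness of $\nu$ and $\omega$ — and observe it vanishes iff $\ker(d)=0$ and $\ker(e)=0$ (the comparison isomorphism $\varphi$ being an iso, it identifies $\nu\ker(d)$ with $\omega\ker(e)$, so one forces the other to vanish). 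The dual argument, using the isofibration property to handle cokernels, gives the epi statement. Everything else is formal.
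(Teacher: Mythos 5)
Your proposal is correct and follows essentially the same route as the paper: define $\deg$ by precomposing $\deg_{\D}$ and $\deg_{\E}$ with the maps on $K_0$ induced by the two projections (the paper invokes the identification $K_0(\D\times_{\B}\E)\isom K_0(\D)\times_{K_0(\B)}K_0(\E)$ here, which is not strictly needed for the lemma itself), and then observe that the projections preserve pseudo-isomorphisms, which the paper dismisses as clear and you justify via the componentwise computation of kernels and cokernels in the fibre product. The only quibble is your parenthetical about $\varphi$ identifying $\nu\ker(d)$ with $\omega\ker(e)$ "forcing" vanishing -- the correct reason the kernel vanishes iff both components do is simply that an object of $\D\times_{\B}\E$ is zero iff both of its components are -- but this does not affect the argument.
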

\begin{proof}
By \cite{higherS}, Lemma 4.21, the natural map $K_0(\D \times_{\B} \E) \isoto K_0(\D) \times_{K_0(\B)} K_0(\E)$ is an isomorphism. Thus, applying $\Hom(-,\Lambda)$ to the projections onto $K_0(\D)$, resp. $K_0(\E)$, induces inclusions from the cone of degree functions on $\D$, resp. $\E$, into $\Hom(K_0(\D \times_{\B} \E),\Lambda)$. Since clearly the projections preserve pseudo-isomorphisms, the degree function property is retained.
\end{proof}

Now assume moreover that $\D$ is split proto-exact. The category $\D \times_{\hat\B_L} \hat\E_L$ is again quasi-abelian, for any field extension $L|K$. Indeed, let $\E\into\A$ be the abelian envelope (from Proposition \ref{abEnv}), then so is the fully faithful embedding
\[\D \times_{\hat\B_L} \hat\E_L \longinto \D \times_{\hat\B_L} \hat\A_L.\]
Furthermore, by our assumption and Lemma \ref{doubledeg}, the map $\deg$ makes $\D \times_{\hat\B_L} \hat\E_L$ into a slope category, where $\rk\!: K_0(\D \times_{\hat\B_L} \hat\E_L) \to \ZZ,\ (N,A) \mapsto \rk_{\D}(N)$, is a well-defined rank function.

The corresponding moduli stack $\M := \ubar{\D} \times_{\M_{\B}} \M_{\E}$ is stratified by Grothendieck classes
\[\M = \coprod_{\substack{[N] \in \pi_0(\D^{\simeq}) \\ \alpha \in K_0(\E)}} \M_{[N],\alpha}\]
similarly to \eqref{K0grading}, where we set $\M_{[N],\alpha} = \emptyset$ unless $[\nu(N)] = \omega(\alpha) \in K_0(\B)$. The HN-type, given by the classes of the graded pieces of the HN-flag \eqref{HNflag}, refines this decomposition,
\begin{equation}\label{HNstrat}
\M_{[N],\alpha} = \coprod_{\tau\models([N],\alpha)} \M_{[N],\alpha}^{\tau}.
\end{equation}
That is, $\tau$ runs over all $(\tau_1,\ldots,\tau_n)\in K_0(\D \times_{\B} \E)^{\times n}$ with $\tau_1 + \cdots + \tau_n = ([N],\alpha)$ and such that $\tau_1 > \cdots > \tau_n$, which we write to abbreviate $\mu(\tau_1) > \cdots > \mu(\tau_n)$. Amongst these are in particular the $1$-step filtrations, where the functor $\M_{[N],\alpha}^\tau =: \ss\M_{[N],\alpha}$ parametrizes semistable objects of class $\tau = ([N],\alpha)\in K_0(\D) \times_{K_0(\B)} K_0(\E)$.

As explained on \cite{DOR}, p.196, the HN-strata $\M_{[N],\alpha}^{\tau}$ are not substacks of $\M_{[N],\alpha}$ in general. In addition to the usual semi-continuity of the slope, this would require elements of $\D$ to have finitely many subobjects. However, in the non-archimedean setting, this condition can be replaced by compactness of the moduli space of subobjects (cf. \textit{op.cit.}, Proposition 8.2.1, and Proposition 8.3.4\textit{f}). For this reason we consider the analytic variant of the Hall algebra. Let us assume from now on that $(\ubar{S_2(\D)}_{[M],[N]})^{\an}$ is compact for all $M,N \in \D$.

\begin{ex}\label{periodex}
\begin{enumerate}[label=$(\arabic*)$]
\item\label{periodFq} Consider the fibre functor $\omega\!: \Fil{[n]}{K} \to \vect_K$, as well as $\nu = \id_{\vect_K}$. Let us write $\M_{\alpha} = \M_{[K^r],\alpha}$ for $\alpha \in K_0(\Fil{[n]}{K})$ and $r=\rk(\alpha)$. Recall that the flag variety $\pi_0\M_{\alpha}$ is represented by the quotient scheme $\GL_r\!/P_{\alpha}$; in fact,
\begin{equation}\label{flagvar}
[\M_{\alpha}] = [\GL_r\!/P_{\alpha}] [\bee \ubar{P_{\alpha}(K)}] \in K_0^{\vect_K}(\Sta K),
\end{equation}
where $P_{\alpha} \subseteq \GL_r$ is the stabilizer of the standard flag of type $\alpha = (r_i)_{i\in\NN} \in K_0^+(\Fil{[n]}{K})$, with $r = r_n$, which is a parabolic subgroup. Namely,
\[\begin{tikzcd}
  \GL_r\!/P_{\alpha} = \pi_0\M_{\alpha} \ar[r, hook] & \prod\limits_{i=1}^n \Gr_{r_i,r} \ar[r, hook, "{\det}"] & \prod\limits_{i=1}^n \PP^{d_i}_K \ar[r, hook] & \PP^N_K,
\end{tikzcd}\]
where $\Gr_{r_i,r} = \GL_r\!/P_{r_i,r}$ is the corresponding Grassmannian, $\det$ the appropriate product of Pl\"ucker embeddings, with
\[d_i = \binom{r}{r_i} = \dim_K\!\big(\!\bigwedge\nolimits^{\!\!r_i}\!K^r\big),\]
and $N = \prod\limits_{i=1}^n (d_i+1)-1$. The $\vect_K$-action on $\GL_r/P_{\alpha}$ becomes left multiplication by $\GL_r(K)$. For $K = \FF_q$, the $\pi_0\ss\M_{\alpha}$ are open subvarieties of $\GL_r/P_{\alpha}$, and studied in \cite{DOR} as the analogues over the finite field $K$ of ($p$-adic) period domains.
\item\label{periodF1} Consider the same situation as in \ref{periodFq}, but with $\D = \vect_{\FF_1}$. Note that $\Hom_{\FF_1}(-,\FF_1)$ is an exact duality on $\D$ and every object is naturally identified with its dual. Thus, we can consider $\Hom_{\FF_1}(-,(K,0))$ as a covariant, exact functor
\[\nu\!: \vect_{\FF_1} \longto \vect_K,\]
for any field $K$. If $K = \FF_q$, we recover the "period domains over $\FF_1$" from \cite{DOR}.
\item\label{periodQp} The main example is provided by Example \ref{isocEX}, where the base field was a $p$-adic field $K$, and we considered the pair of fibre functors
\begin{equation}\label{isofibs}
\begin{tikzcd}
  \isoc_K \ar[r, "{\nu}"] & \vect_K & \ar[l, "{\omega}"'] \Fil{\ZZ}{K}.
\end{tikzcd}
\end{equation}
Similarly to \ref{periodFq}, we have an equivalence of functors, hence analytic spaces,
\[(\pi_0\ss\M_{[N],\alpha})^{\an} \isom \ss{\F(N,\gamma(\alpha))}\]
with the $p$-adic period domain of type $(N,\gamma(\alpha)$ from \textit{op.cit.}, Proposition 8.2.1. Here, the map $\gamma\!: K_0(\Fil{[n]}{K}) \isoto K_0(\vect_K)^{\oplus n}$ sends a filtration to its graded pieces.
\end{enumerate}
\end{ex}

%\begin{rem}\label{actions}
%The decomposition \eqref{HNstrat} not only holds as stacks, but in fact $\D$-equivariantly, where the action of $\D$ is defined on each $\M_{\alpha}$, as follows.
%
%We may consider $\omega(\alpha) \in K_0^+(\D) \isom \pi_0(\D^{\simeq})$. For $(N,A) \in \M_{\alpha}(K)$ and $[N] = \omega(\alpha)$, we can lift $g \in \Aut(\omega(\alpha)) = \Aut(N)$ to an isomorphism $g\!: A \isoto gA$. In particular, $\omega(gA) \isom N$. Then $g.(N,A) = (N,gA)$ is easily seen to be well-defined up to unique isomorphism. The action on morphisms $\phi\!: (N,A) \isoto (N,B)$ is defined by
%\[g.\phi = g \circ \phi \circ g^{-1}\!: (N,gA) \isoto (N,gB).\]
%This is functorial in $N \isoto M$, by replacing $g$ with its image under $\Aut(N) \isoto \Aut(M)$.
%\end{rem}

\begin{prop}
In the equivariant motivic Hall algebra, \eqref{HNstrat} translates to the identity
\begin{equation}\label{HNrec}
[\M_{[N],\alpha}] = \sum_{\tau \models ([N],\alpha)} [\ss\M_{[N_1],\alpha_1}] * \cdots * [\ss\M_{[N_n],\alpha_n}],
\end{equation}
where $\tau_i = ([N_i],\alpha_i)$ for $1 \leq i \leq n$, and the right-hand side carries its natural $\D$-action
\[[\ss\M_{[N_i],\alpha_i}] \in K_0^{\Aut(N_i)}(\Sta \M_{\E}) \subseteq \H^{\D}(\E).\]
\end{prop}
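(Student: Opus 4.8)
The plan is to deduce \eqref{HNrec} directly from the stratification \eqref{HNstrat} by passing to the Grothendieck ring and identifying each stratum with an iterated Hall product. First I would apply the defining relations \eqref{K0StRel} of $K_0^{\D}(\Sta -)$ to the decomposition \eqref{HNstrat} --- which, by the discussion preceding the statement, is a genuine locally finite decomposition into locally closed substacks precisely because of the standing compactness hypothesis on $(\ubar{S_2(\D)}_{[M],[N]})^{\an}$ (in the purely motivic setting one instead needs $\D$ to have finite $\Hom$-sets) --- giving
\[[\M_{[N],\alpha}] = \sum_{\tau \models ([N],\alpha)} [\M_{[N],\alpha}^{\tau}]\]
in $\H^{\D}(\E)$, each stratum carried with its tautological map to $\M_{\E}$ and its $\D$-structure over $\ubar{\D}$; only finitely many summands are nonzero. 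It then remains to show, for each $\tau = (\tau_1,\ldots,\tau_n)$ with $\tau_i = ([N_i],\alpha_i)$ and $\mu(\tau_1) > \cdots > \mu(\tau_n)$, that $[\M_{[N],\alpha}^{\tau}] = [\ss\M_{[N_1],\alpha_1}] * \cdots * [\ss\M_{[N_n],\alpha_n}]$.

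Next I would unwind the right-hand side. Iterating Proposition \ref{HA}, using the $2$-Segal property of $\S(\E)$ for associativity and the explicit description of the equivariant product from Remark \ref{eqTrExpl}, the $n$-fold Hall product is computed as pullback-pushforward along the $n$-fold Segal correspondence $\S_1(\E)^{\times_K n} \longleftarrow \S_n(\E) \longrightarrow \S_1(\E)$, fibred over the correspondence $\ubar{\D}^{\times_K n} \longleftarrow \ubar{S_n(\D)} \longrightarrow \ubar{\D}$ of constant simplicial stacks. Since the long-edge map $\S_n(\E) \to \S_1(\E)$ is faithful and all morphisms are $K_0^{\D}(\Sta -)$-proper, its pushforward is precomposition, while the Segal (spine) map is $K_0^{\D}(\Sta -)$-smooth by transferability of $\S(\E)$ (cf. Theorem \ref{NMUprop}), so pullback along it is a fibre product. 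Hence the product equals the class of the moduli stack $\Y^{\tau}$ of flags $0 \subseteq A_1 \subseteq \cdots \subseteq A_n$ in $\E$ --- equipped with a compatible flag in $\D$ of the rational structure of $A_n$ --- whose $i$-th subquotient lies in $\ss\M_{[N_i],\alpha_i}$, with $\Y^{\tau} \to \M = \ubar{\D} \times_{\M_{\B}} \M_{\E}$ remembering $A_n$ together with its rational structure, and the $\D$-equivariant structure obtained by restricting the $\prod_i \Aut(N_i)$-action along the evident $n$-fold analogue $P_{N_1,\ldots,N_n}$ of the parabolic and inducing up to $\Aut(N)$, as in \eqref{parabInd}.

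Finally I would produce a natural isomorphism $\M_{[N],\alpha}^{\tau} \isom \Y^{\tau}$ over $\M$. In one direction, sending an object of $\M_{[N],\alpha}^{\tau}$ to its Harder-Narasimhan flag is functorial, and by Lemma \ref{doubledeg} the sum-of-degrees makes $\D \times_{\hat\B_L} \hat\E_L$, and more generally the relevant families over affine test schemes, into slope categories, so this HN-flag is defined in families; by the definition of the HN-type, its $i$-th subquotient is semistable of class $\tau_i$, so it lands in $\Y^{\tau}$. Conversely, an $R$-point of $\Y^{\tau}$ is a flag whose subquotients are semistable with strictly decreasing slopes $\mu(\tau_1) > \cdots > \mu(\tau_n)$, so by the vanishing $\Hom(\ss\E_{\lambda},\ss\E_{\mu}) = 0$ for $\lambda > \mu$ (Remark \ref{slopeprops}) together with uniqueness of the HN-flag, it must coincide with the HN-flag of $A_n$; thus the two assignments are mutually inverse. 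Both are compatible with the $\D$-structure, since the slope filtration in the split proto-exact category $\D$ is likewise unique and functorial. Combining this with the displayed identity from the first step gives \eqref{HNrec}.

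The step I expect to be the main obstacle is the last one: checking that the Harder-Narasimhan formalism, and in particular the identification $\M_{[N],\alpha}^{\tau} \isom \Y^{\tau}$, really works over an arbitrary affine test scheme $\Spec(R)$, so that \eqref{HNstrat} is a decomposition of stacks --- not merely of $K$-points or $\bar K$-points --- into honest substacks. This is precisely where the standing compactness hypothesis on $(\ubar{S_2(\D)}_{[M],[N]})^{\an}$, which replaces finiteness of the set of subobjects in $\D$, is used, and it is the reason the statement is naturally phrased for the (possibly analytic) equivariant motivic Hall algebra rather than assuming $\D$ finitary.
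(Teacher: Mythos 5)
Your proposal is correct and follows essentially the same route as the paper: decompose $\M_{[N],\alpha}$ by HN type, identify each stratum with the iterated Hall product by interpreting the product as a moduli stack of flags with semistable subquotients of prescribed classes, invoke uniqueness of the HN flag together with $\Hom(\ss\E_{\lambda},\ss\E_{\mu})=0$ for $\lambda>\mu$, and account for the $\D$-action via parabolic induction from $P_{\eta}$ up to $\Aut(N)$. The only presentational difference is that the paper makes the equivariance step explicit by further stratifying $\M_{[N],\alpha}^{\tau}$ into pieces indexed by $\Aut(N)/P_{\eta}(K)$ (recording the $\D$-component of the HN flag) and exhibiting the stratum as the induced object, whereas you assert this compatibility more briefly.
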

\begin{proof}
First, assume $\D = 0$. Then \eqref{HNrec} follows immediately from \eqref{HNstrat} and by definition,
\begin{equation}\label{HNrecOBV}
[\ss\M_{\alpha_1}] * \cdots * [\ss\M_{\alpha_n}] = [(\ss\M_{\alpha_1} \times_K \cdots \times_K \ss\M_{\alpha_n}) \times_{(\M \times_K \cdots \times_K \M)} \S_n(\E)] = [\M_{\alpha}^{\tau}]
\end{equation}
using Remark \ref{slopeprops}. Next, suppose that $\D$ has finite automorphism groups. Then the main observation is that each stratum $\M_{[N],\alpha}^{\tau}$ of type $\tau$ is further stratified by the substacks $\M_{[N],\alpha}^{\tau,M_{\bullet}}$ parametrizing objects where $M_{\bullet}$ is the $\D$-component of the HN-flag. That is,
\[\M_{[N],\alpha}^{\tau} = \coprod_{[M_{\bullet}] = \eta} \M_{[N],\alpha}^{\tau,M_{\bullet}} = \!\coprod_{\bar{g}\in\Aut(N)/P_{\eta}(K)}\! \M_{[N],\alpha}^{\tau,\bar{g}N_{\bullet}}\]
where $\eta = \phi(\pr_{\D}(\tau))$, where $\phi$ is the inverse of $\gamma\!: K_0(\Fil{[n]}{\D}) \isoto K_0(\D)^{\oplus n}$, while $P_{\eta}(K)$ is the corresponding parabolic subgroup, and $N_{\bullet}$ is the filtration $\phi(N_1,\ldots,N_n)$ of $N$.

The strata are pairwise isomorphic, and the action is as expected, where in particular, each $h\in\Aut(N)$ interchanges $\M_{[N],\alpha}^{\tau,\bar g_1 N_{\bullet}}$ and $\M_{[N],\alpha}^{\tau,\bar g_2 N_{\bullet}}$ whenever $hg_1 \in g_2P_{\eta}(K)$. That is, the HN-stratum is given by parabolic induction (cf. \eqref{parabInd}) as follows,
\[\M_{[N],\alpha}^{\tau} = \Ind_{P_{\eta}(K)}^{\Aut(N)}(\M_{[N],\alpha}^{\tau,N_{\bullet}}) = \Ind_{P_{\eta}(K)}^{\Aut(N)}\big((\ss\M_{[N_1],\alpha_1} \times_K \cdots \times_K \ss\M_{[N_n],\alpha_n}) \times_{\M_{\E}^{\times n}} \S_n(\E)\big),\]
where the second equation can be seen in a similar manner as \eqref{HNrecOBV}, using that $\D$ is split proto-exact, and $\bar{g}\in\Aut(N)/P_{\eta}(K)$ embeds $N_{\bullet}$ into $N \isom N_1 \oplus \cdots \oplus N_n$ on the right-hand side. Commuting the parabolic induction with the fibre product, this allows us to deduce
\begin{equation}\label{HNrecstrat}
[\M_{[N],\alpha}^{\tau}] = [\ss\M_{[N_1],\alpha_1}] * \cdots * [\ss\M_{\alpha_n,[N_n]}]
\end{equation}
as claimed. Finally, in the general case, the argument is similar, except that we do not have the convenient description of the parabolic induction functor as above. Nonetheless, the map
\[[\M_{[N],\alpha}^{\tau,N_{\bullet}}/\ubar{P_{\eta}(K)}] \longto [\M_{[N],\alpha}^{\tau}/\ubar{\Aut(N)}] \mbox{ in } \St (\bee\ubar{\Aut(N)} \times_K \M_{\E})\]
is a geometric equivalence, and therefore, the same is true for the morphism
\[[\big((\ss\M_{[N_1],\alpha_1} \times_K \cdots \times_K \ss\M_{[N_n],\alpha_n}) \times_{\M_{\E}^{\times n}} \S_n(\E)\big)/\ubar{P_{\eta}(K)}] \longto [\M_{[N],\alpha}^{\tau}/\ubar{\Aut(N)}],\]
by the equivalence explained in the previous case. But we can identify the left-hand side with
\[\Big[\big(([\ss\M_{[N_1],\alpha_1}/\ubar{\Aut(N_1)}] \times_K \cdots \times_K [\ss\M_{[N_n],\alpha_n}/\ubar{\Aut(N_n)}]) \times_{\ubar{\D}^{\times n}} \ubar{S_n(\D)}\big) \times_{\M_{\E}^{\times n}} \S_n(\E)\Big].\]
As before, we may commute the fibre products, and conclude \eqref{HNrecstrat} once again.
\end{proof}

\begin{rem}
In the original situation \cite{Rei} over $\FF_q$, the analogue of \eqref{HNrec} is the equation
\[\mathbbm 1_{\E} = \sum_{\lambda_1 > \cdots > \lambda_n} \mathbbm 1_{\ss\E_{\lambda_1}} * \cdots * \mathbbm 1_{\ss\E_{\lambda_n}} \in \hat{\Hall}(\E).\]
Indeed, in the completed Hall algebra $\hat{\H}^{\D}(\E)$, we may sum over all $(\alpha,[N]) \in K_0(\E \times_{\B} \D)$.
\end{rem}

\begin{cor}
If the right-hand side converges, the following equation holds in $\hat{\H}^{\D}(\E)$,
\begin{equation}\label{HNinv}
[\ss\M_{[N],\alpha}] = \sum_{\tau \dom ([N],\alpha)} (-1)^{m-1} [\M_{[N_1],\alpha_1}] * \cdots * [\M_{[N_m],\alpha_m}],
\end{equation}
where $\tau_i = ([N_i],\alpha_i)$ for all $1 \leq i \leq m$, whereas $\tau \dom ([N],\alpha)$ means $\tau_1 + \cdots + \tau_m = ([N],\alpha)$ and $\tau_1 + \cdots + \tau_i > ([N],\alpha)$ for $i < m$.
\end{cor}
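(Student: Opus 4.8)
The plan is to invert the recursion \eqref{HNrec} by a standard Möbius-type argument on the poset of HN-types, exactly as in the classical setting (\cite{Rei}, \cite{Joy2}), carried out in the completed Hall algebra $\hat\H^{\D}(\E)$. First I would fix the class $([N],\alpha)$ and consider the partially ordered set $\mathcal T$ of all tuples $\tau = (\tau_1,\ldots,\tau_m)$ with $\tau_i \in K_0(\D\times_{\B}\E)$, $\tau_1 + \cdots + \tau_m = ([N],\alpha)$, and $\mu(\tau_1) > \cdots > \mu(\tau_m)$; these index precisely the terms appearing on the right of \eqref{HNrec}. Writing $m(\tau) := [\ss\M_{[N_1],\alpha_1}] * \cdots * [\ss\M_{[N_m],\alpha_m}]$ and $M(\tau) := [\M_{[N_1],\alpha_1}] * \cdots * [\M_{[N_m],\alpha_m}]$, equation \eqref{HNrec} applied to each factor $[\M_{[N_i],\alpha_i}]$ and multiplied out gives $M(\tau) = \sum_{\tau' \text{ refines } \tau} m(\tau')$, where $\tau'$ ranges over all ways of subdividing each block $\tau_i$ into a consecutive run of classes of strictly decreasing slope summing to $\tau_i$. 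In particular, taking $m=1$, $M(([N],\alpha)) = [\M_{[N],\alpha}] = \sum_{\tau} m(\tau)$, which is \eqref{HNrec} itself.

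Next I would set up the inversion. The key combinatorial input is the classical identity that for the "refinement" relation on ordered decompositions, the Möbius function has the simple alternating form $(-1)^{m-1}$ summed over the decompositions $\tau \dom ([N],\alpha)$ with the "dominance'' condition $\tau_1 + \cdots + \tau_i > ([N],\alpha)$ for all $i < m$ — this is precisely Reineke's lemma (\cite{Rei}, proof of Corollary 5.5; see also \cite{Rei2}, \cite{Joy2}, Theorem 5.2). Concretely, one proves by induction on $\rk(\alpha) + \rk_{\D}(N)$ that
\[
[\ss\M_{[N],\alpha}] = \sum_{\tau \dom ([N],\alpha)} (-1)^{m-1}\,[\M_{[N_1],\alpha_1}] * \cdots * [\M_{[N_m],\alpha_m}],
\]
by substituting \eqref{HNrec} into the right-hand side and checking that all terms cancel except $[\ss\M_{[N],\alpha}]$. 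The cancellation is purely formal once one knows the combinatorial fact that, for each fixed refinement $\sigma$ of $([N],\alpha)$ other than the trivial one-block decomposition, the signed count $\sum (-1)^{m-1}$ over dominant $\tau$ that refine to $\sigma$ vanishes; for the trivial decomposition it equals $1$. This is where the rank being a positive, additive function (so that $\rk$ is bounded on the relevant flags, by the slope-category finiteness built into \eqref{HNstrat}) guarantees that only finitely many $\tau$ contribute in each graded piece, so the manipulation is legitimate term by term — and the hypothesis "if the right-hand side converges'' is exactly the statement that the global sum over all $([N],\alpha)$ makes sense in $\hat\H^{\D}(\E)$.

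The main obstacle is not algebraic but bookkeeping: one must match the dominance condition $\tau_1 + \cdots + \tau_i > ([N],\alpha)$ (meaning $\mu$ of the partial sum exceeds $\mu([N],\alpha)$) with the combinatorics of HN-refinements, i.e. verify that the poset structure here is genuinely the one for which Reineke's Möbius computation applies. This hinges on the convexity of the slope on short exact sequences (Remark \ref{slopeprops}) in the category $\D \times_{\hat\B_L}\hat\E_L$, together with the identification $K_0(\D\times_{\B}\E) \isoto K_0(\D)\times_{K_0(\B)}K_0(\E)$ from Lemma \ref{doubledeg}, which ensures slopes and partial sums behave additively exactly as in the non-equivariant case. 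Once that is in place, the $\D$-equivariance adds nothing new: the products $*$ are the parabolic-induction products of $\H^{\D}(\E)$, the identity \eqref{HNrec} is an identity of $\D$-equivariant classes, and the inversion is performed formally inside the associative algebra $\hat\H^{\D}(\E)$, so the same signed sum inverts it. I would therefore present the proof as: (i) recall Reineke's combinatorial inversion lemma; (ii) observe that \eqref{HNrec} has exactly the shape to which it applies, the poset being that of slope-decreasing ordered decompositions of $([N],\alpha)$ in $K_0(\D\times_\B\E)$; (iii) conclude \eqref{HNinv}, noting convergence is the stated hypothesis.
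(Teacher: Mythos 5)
Your proposal is correct and follows essentially the same route as the paper: substitute \eqref{HNrec} into the right-hand side of \eqref{HNinv}, use convexity of the slope to reconcile the dominance condition with the refinement combinatorics, and invoke Reineke's combinatorial cancellation lemma (\cite{Rei}, Lemma 5.4) to see that only $[\ss\M_{[N],\alpha}]$ survives. Your Möbius-function framing is just a repackaging of the same signed-count argument the paper uses.
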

\begin{proof}
This inversion is due to Reineke \cite{Rei}, Theorem 5.1; also see \cite{Joy4}, Theorem 5.12, for a generalized version of it. The idea is simply to substitute \eqref{HNrec} into \eqref{HNinv}, to get
\begin{equation}\label{naivesub}
\sum_{(\tau_1,\ldots,\tau_m) \dom ([N],\alpha)}\!\! (-1)^{m-1} \!\!\!\!\sum_{(\tau_{i,1},\ldots,\tau_{i,n_i}) \models \tau_i}\!\! [\ss\M_{\tau_{1,1}}] * \cdots * [\ss\M_{\tau_{1,n_1}}] * \cdots * [\ss\M_{\tau_{m,1}}] * \cdots * [\ss\M_{\tau_{m,n_m}}].
\end{equation}
Note that by convexity of the slope $\mu$, we can equivalently describe
\[(\tau_1,\ldots,\tau_m) \dom ([N],\alpha) \longiff \tau_1 + \cdots + \tau_m = ([N],\alpha) \mbox{ and } \tau_1 + \cdots + \tau_i > \tau_{i+1} + \cdots + \tau_m \mbox{ for } i < m.\]
Using this characterization, it is evident that \eqref{naivesub} is of the form
\[\sum_{\tau \dom ([N],\alpha)} (-1)^{n_{\tau}-1} [\ss\M_{[N_1],\alpha_1}] * \cdots * [\ss\M_{[N_m],\alpha_m}],\]
for appropriate $n_{\tau}\in\NN$. Then it becomes a purely combinatorial statement that the only term which remains is indeed $[\ss\M_{[N],\alpha}]$, namely \cite{Rei}, Lemma 5.4.
\end{proof}

\begin{rem}
In order to circumvent the above convergence issues, we need to apply more sophisticated inversion formulae, like those due to Zagier \cite{Zag} and Laumon-Rapoport \cite{LR}, the latter based on the idea of Kottwitz to apply the Langlands lemma from the theory of Eisenstein series. In particular, \eqref{HNinv} applies in the case where $\omega\!: \E = \rep_K(Q) \to \D = 0$, as well as to Example \ref{periodex} \ref{periodFq}, but for instance not to $\omega\!: \vect_X \to \D = 0$.
\end{rem}

\begin{cor}\label{mu(HNinv)}
For any equivariant motivic measure $\mu\!: K_0^{\D}(\Sta K) \to Q$,
\[\mu(\ss\M_{[N],\alpha}) = \sum_{\tau \dom ([N],\alpha)} (-1)^{m-1} \mu(\LL)^{-\sum_{i < j} \chi^{\op}(\alpha_i,\alpha_j)} \mu(\M_{[N_1],\alpha_1}) \cdots \mu(\M_{[N_m],\alpha_n}) \in Q,\]
whenever the right-hand side is defined.
\end{cor}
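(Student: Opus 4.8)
The plan is to obtain this as a formal consequence of the inversion identity~\eqref{HNinv} in $\hat\H^{\D}(\E)$, by applying the integration map of Theorem~\ref{intthm} for the transfer theory $\T=K_0^{\D}(\Sta-)$ (or its analytic variant $K_0^{\D}(\AnSta(-)^{\an})$ in the cases where~\eqref{HNinv} is only available there), and then composing with $\mu$. The hypotheses of Theorem~\ref{intthm} are in force: $\M_{\E}$ is $K_0^{\D}(\Sta-)$-proper, $(\del_2,\del_0)$ is proper, and $[\AA^1_K/\Ga]$ is measurable with $\mu_{\T}([\AA^1_K/\Ga])=1$, as recorded after~\eqref{probability}. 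Thus we get an algebra morphism $\int_{\E}^{\D}\colon\hat\H^{\D}(\E)\to K_0^{\D}(\Sta K)^{\pair{\LL^{-1},\chi^{\op}}}[[K_0(\E)]]$ which is graded for $K_0(\E)$ and which, on a generator $[\X\to\M_{\alpha}]$ of degree $\alpha$, returns $\mu_{\T}(\X)\cdot\alpha$; in particular it sends $[\M_{[N_i],\alpha_i}]$ to $[\M_{[N_i],\alpha_i}]\cdot\alpha_i$, with the class now taken in the coefficient ring $K_0^{\D}(\Sta K)$, and likewise sends $[\ss\M_{[N],\alpha}]$ to $[\ss\M_{[N],\alpha}]\cdot\alpha$.

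First I would apply $\int_{\E}^{\D}$ to both sides of~\eqref{HNinv} and read off the coefficient of $\alpha\in K_0(\E)$. On the left this is simply $[\ss\M_{[N],\alpha}]\in K_0^{\D}(\Sta K)$. On the right, fix $\tau=(([N_1],\alpha_1),\dots,([N_m],\alpha_m))\dom([N],\alpha)$; the corresponding summand maps to the product, in the twisted group ring, of $[\M_{[N_1],\alpha_1}]\cdot\alpha_1,\dots,[\M_{[N_m],\alpha_m}]\cdot\alpha_m$. Iterating the twisted multiplication $\iota_{\alpha\beta}\circ\zeta^{\chi^{\op}(\alpha,\beta)}\circ m$ with $\zeta=\mu_{\T}(\AA^1_K)^{-1}=\LL^{-1}$ and using that $\chi^{\op}$ is bilinear, this product sits in degree $\alpha_1+\cdots+\alpha_m=\alpha$ with coefficient $(-1)^{m-1}\LL^{-\sum_{i<j}\chi^{\op}(\alpha_i,\alpha_j)}[\M_{[N_1],\alpha_1}]\cdots[\M_{[N_m],\alpha_m}]$. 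Summing over all $\tau\dom([N],\alpha)$ and then applying the ring morphism $\mu\colon K_0^{\D}(\Sta K)\to Q$ to both sides gives the asserted identity, since $\mu(\LL^{-1})=\mu(\LL)^{-1}$ and $\mu$ is multiplicative.

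I expect the substantive points — as opposed to routine bookkeeping — to be twofold. First, convergence: the right-hand side of~\eqref{HNinv}, hence of the claimed formula, is an infinite sum, so the manipulations above are legitimate only under the standing hypothesis that ``the right-hand side is defined'', which is precisely what allows one to work in the completed Hall algebra $\hat\H^{\D}(\E)$, pass to graded pieces, and apply $\mu$ termwise. Second, and more delicate, one must ensure that $\mu$, being an equivariant motivic measure, is compatible with the coefficient (Green) Hall algebra structure on $K_0^{\D}(\Sta K)$ — that is, that $\mu$ intertwines the parabolic-induction product feeding into the $\ast$-multiplication with the product in $Q$ — so that $\mu$ of an $m$-fold product of classes equals the honest product $\mu(\M_{[N_1],\alpha_1})\cdots\mu(\M_{[N_m],\alpha_m})$ appearing in the statement; for the measures of Example~\ref{eqmotmeas} this is built into the target (e.g. representations of the relevant product of automorphism groups), but it is the hypothesis I would isolate and check explicitly. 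Everything else is immediate from Theorem~\ref{intthm} and the definition of the twisted group ring.
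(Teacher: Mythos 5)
Your proposal is correct and is exactly the paper's argument: the corollary is obtained by applying the integration map $\int_{\E}^{\D} d\mu$ of Theorem~\ref{intthm} to the inversion identity~\eqref{HNinv} and using that it is an algebra morphism into the twisted group ring, which is where the factor $\mu(\LL)^{-\sum_{i<j}\chi^{\op}(\alpha_i,\alpha_j)}$ comes from. Your two flagged caveats (convergence, and multiplicativity of $\mu$ for the parabolic-induction product on $K_0^{\D}(\Sta K)$) are precisely the "whenever the right-hand side is defined" hypothesis and the definition of an equivariant motivic measure as used in the paper, so nothing is missing.
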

\begin{proof}
This is the image of \eqref{HNinv} under $\int_{\E}^{\D} d\mu$, using that it is an algebra morphism.
\end{proof}

In particular, for $\D = 0$, we can recursively compute any motivic measure of $\ss\M_{\alpha}$, see Example \ref{motMeasEX}. Now, if $X\in\Var K$ carries the action of a group $H$, then by Example \ref{eqmotmeas}, its \'etale cohomology (with compact support) is a representation of $H$, so its virtual Euler-Poincar\'e characteristic is an element
\[\chi_c(X) \in K_0(\rep_{\Qlbar}(G_K \times H)).\]
Applying $\Fun(\bee H,-)$ in \eqref{K0VarK0St} allows us to extend this to the equivariant motivic measure
\[\chi_c\!: K_0^H(\Sta K) \longto K_0(\rep_{\Qlbar}\!(H \times G_K))[(t^{-n}-1)^{-1} \mid n\in\NN].\]
Altogether, for $\D$ as before, we can apply Corollary \ref{mu(HNinv)} to the morphism of rings
\begin{equation}\label{EPchar}
\chi_c\!: K_0^{\D}(\Sta K) \longto \bigoplus_{N\in\pi_0(\D^{\simeq})} K_0(\rep_{\Qlbar}\!(\Aut(N) \times G_K))[(t^{-n}-1)^{-1} \mid n\in\NN],
\end{equation}
where the right-hand side carries the product induced by parabolic induction. Indeed,
\[\rep_{\Qlbar}\!(H \times G) = \Fun(\bee H \times \bee G,\vect_{\Qlbar}) = \Fun(\bee H,\Fun(\bee G,\vect_{\Qlbar})) = \Fun(\bee H,\rep_{\Qlbar}\!(G))\]
implies that the right-hand side of \eqref{EPchar} is defined precisely like the left-hand side,
\[\bigoplus_{N\in\pi_0(\D^{\simeq})} K_0(\rep_{\Qlbar}\!(\Aut(N) \times G_K)) \isom K_0(\Fun^f(\D^{\simeq},\rep_{\Qlbar}\!(G_K))).\]
Thus, $\chi_c$ is compatible with multiplication, by Remark \ref{eqTrId}. For $\eta = \phi(\pr_{\D}(\tau))$, this yields
\[\chi_c(\ss\M_{[N],\alpha}) = \sum_{\tau \dom ([N],\alpha)} (-1)^{m-1} t^{\sum_{i < j} \chi^{\op}(\alpha_i,\alpha_j)} \Ind_{P_{\eta}(K)}^{\Aut(N)}(\chi_c(\M_{[N_1],\alpha_1}) \cdots \chi_c(\M_{[N_m],\alpha_m})).\]
Of course, the analogous statement holds for any equivariant motivic measure (Example \ref{eqmotmeas}). Now consider the situation of Example \ref{periodex} \ref{periodFq}. Then by \eqref{flagvar}, we obtain from the above
\[\chi_c(\pi_0\ss{\M_{\alpha}}) = \sum_{\tau \dom \alpha} (-1)^{m-1} t^{\sum_{i < j} \chi^{\op}(\alpha_i,\alpha_j)} \Ind_{P_{\eta}(K)}^{\GL_r(K)}(\chi_c(\GL_{\eta_1}\!/P_{\alpha_1}) \cdots \chi_c(\GL_{\eta_m}\!/P_{\alpha_m})).\]
But the Euler-Poincar\'e characteristic of the flag variety of type $\alpha$ is given by
\begin{equation}\label{EPchar(F)}
\chi_c(\GL_r/P_{\alpha}) = \frac{(t^r-1) \cdots (t-1)}{(t^{\delta_1}-1) \cdots (t-1) \cdots (t^{\delta_n}-1) \cdots (t-1)} = \sum_{w \in S_r/S_{\delta}} t^{\l(w)}
\end{equation}
in $K_0(\rep_{\Qlbar}\!(\GL_r(K) \times G_K))$, where $\delta = \gamma(\alpha)$, and the notation $S_{\delta}$ in \eqref{EPchar(F)} means the stabilizer of $\delta$ in the symmetric group. Note that unlike for $K_0(\Var K)$, the localization map here is injective.

Over $\FF_1$ (Example \ref{periodex} \ref{periodF1}), we get a completely analogous formula in $K_0(\rep_{\Qlbar}\!(S_r \times G_K))$,
\[\chi_c(\pi_0\ss{\M_{\alpha}}) = \!\sum_{\tau \dom \alpha}\! (-1)^{m-1} t^{\sum_{i < j} \chi^{\op}(\alpha_i,\alpha_j)} \Ind_{S_{\eta}}^{S_r}(\chi_c(\GL_{\eta_1}\!/P_{\alpha_1}) \cdots \chi_c(\GL_{\eta_m}\!/P_{\alpha_m})).\]

\section{Equivariant motivic Hall modules}\label{sec11}

The notion of Hall modules originates in \cite{Matt} and has been systematically developed in both \cite{Tashi} and \cite{Matt2}, based on the formalism of relative $2$-Segal objects. The framework of transfer theories from $\mathsection$\ref{sec7} applies verbatim, yielding the following result.

\begin{thm}[\cite{Tashi}, Corollary 1.3.9; \cite{Matt2}, Theorem 4.2]\label{Hallmod}
Let $\T\!: \C \dashto \V$ be a transfer theory, let $X$ be a $\T$-transferable unital $2$-Segal object of $\C$, and let $\lambda\!: Y \to X$ be a (unital) relative $2$-Segal object such that the correspondence
\[\begin{tikzcd}
X_1 \times Y_0 & \ar[l, "{(\lambda_1,\del_0)}"'] Y_1 \ar[r, "{\del_1}"] & Y_0
\end{tikzcd}\]
is $\T$-(smooth, proper). Then $\T(Y_0)$ is a left $\T(X_1)$-module via
\[\begin{tikzcd}
\T(X_1) \otimes \T(Y_0) \ar[r, "{\boxtimes}"] & \T(X_1 \times Y_0) \ar[r, "{(\lambda_1,\del_0)^*}"] & \T(Y_1) \ar[r, "{(\del_1)_*}"] & \T(Y_0).
\end{tikzcd}\]
\end{thm}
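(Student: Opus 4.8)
The statement is the relative analogue of Proposition \ref{HA}, and the plan is simply to transcribe the proof of \cite{HSS1}, Proposition 8.1.7 (our Proposition \ref{HA}), replacing the symmetric correspondence \eqref{SdotCorr} by the module correspondence $X_1 \times Y_0 \xfrom{(\lambda_1,\del_0)} Y_1 \xto{\del_1} Y_0$ and the $2$-Segal conditions on $X$ by the relative $2$-Segal conditions on $\lambda\!: Y \to X$. First I would record that the displayed composite
\[\T(X_1) \otimes \T(Y_0) \xto{\ \boxtimes\ } \T(X_1 \times Y_0) \xto{(\lambda_1,\del_0)^*} \T(Y_1) \xto{(\del_1)_*} \T(Y_0)\]
is well-defined: the hypothesis that the level-$1$ correspondence is $\T$-(smooth, proper) provides the pullback $(\lambda_1,\del_0)^*$ and the pushforward $(\del_1)_*$, and the external product $\boxtimes$ is available for the arbitrary pair $(X_1, Y_0)$, so this is a morphism in $\V$ of the shape $\T(X_1) \otimes \T(Y_0) \to \T(Y_0)$; together with the unit isomorphism $\T(\pt) \isoto 1_{\V}$ (already used in Proposition \ref{HA} to make $\H(X,\T) = \T(X_1)$ an algebra in $\V$) this exhibits $\T(Y_0)$ as a module object in $\V$. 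One also needs the auxiliary correspondence $X_1 \times X_1 \times Y_0 \leftarrow Y_2 \to Y_0$ and the degenerate (unit) correspondence to be $\T$-(smooth, proper); this follows from the stability of the two classes under base change (a transfer-theory axiom), applied to the cartesian squares furnished by the relative $2$-Segal condition, using that $X$ is $\T$-transferable.

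For associativity, given $a, b \in \T(X_1)$ and $m \in \T(Y_0)$, I must show $(a * b) \cdot m = a \cdot (b \cdot m)$. As in the algebra case, the comparison is carried out one simplicial degree up, at $Y_2$: its face maps and the structure map $\lambda_2\!: Y_2 \to X_2$ assemble into a span $X_1 \times X_1 \times Y_0 \leftarrow Y_2 \to Y_0$ through which both bracketings factor --- the route $(a*b)\cdot m$ via the cartesian square expressing $Y_2$ through $X_2$ and $Y_1$ (first multiply the two $X$-factors using \eqref{SdotCorr}, then act through the level-$1$ correspondence), and the route $a\cdot(b\cdot m)$ via the cartesian square expressing $Y_2$ as an iterate of $Y_1$ over $Y_0$ (act with the inner $X$-factor, then with the outer one), both squares being instances of the relative $2$-Segal property of $\lambda$. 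Plugging these into the resulting diagram of pullbacks and pushforwards and invoking the base change property \eqref{smoothproper}, the functoriality of $(-)^*$ and $(-)_*$, and the associativity and $\T$-smooth naturality of $\boxtimes$, one matches the two routes by precisely the diagram chase in the proof of Proposition \ref{HA}. Unitality, $1_{\H(X,\T)} \cdot m = m$, follows from the unital part of the relative $2$-Segal axiom, which identifies the base change of the level-$1$ correspondence along $\sigma_0\!: X_0 \to X_1$ with the identity span on $Y_0$ (up to the unit isomorphism), after which one concludes by base change and functoriality of the pushforward, copying the corresponding step for the unit of $\H(X,\T)$.

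The genuine work is the simplicial bookkeeping: exhibiting the two cartesian decompositions of $Y_2$ and matching them with the two bracketings in the associativity law, and checking that every edge occurring in the level-$2$ diagram lies in the prescribed $\T$-smooth or $\T$-proper class. Both are the literal relative counterparts of the arguments in \cite{HSS1}, Proposition 8.1.7, and the underlying module structure on correspondences --- independent of any transfer theory --- is exactly \cite{Tashi}, Corollary 1.3.9, respectively \cite{Matt2}, Theorem 4.2. Accordingly, in the final write-up I would state that the proof of Proposition \ref{HA} applies \emph{mutatis mutandis} and cite those sources for the simplicial input.
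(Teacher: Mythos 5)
Your proposal is correct and matches the paper's treatment: the paper does not spell out a proof either, but states that the transfer-theory framework of $\mathsection$\ref{sec7} "applies verbatim" and defers the simplicial/correspondence-level input to the cited sources, exactly as you do. Your reconstruction — the module map from the level-$1$ correspondence, associativity via the two relative $2$-Segal cartesian decompositions of $Y_2$ combined with base change, and unitality from the degenerate correspondence — is the intended \emph{mutatis mutandis} adaptation of Proposition \ref{HA}.
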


Throughout this section, $\E$ is an exact, $K$-linear category endowed with an exact duality structure (the setting of real algebraic $K$-theory; cf. \cite{HM}, \cite{Dotto}), which satisfies the reduction assumption on \cite{Matt2}, p.26.

Let $G = \Gal(\CC|\RR) = \{1,\sigma\}$. Consider the $G$-equivariant inclusion of the $k$-skeleton
\begin{equation}\label{integral}
S^{k,1} \wedge |\E^{\simeq}| \longinto |S^{\pair k}(\E)^{\simeq}|,
\end{equation}
where $S^{\pair k}(\E)$ is the higher Waldhausen construction \cite{higherS}, and $G$ acts as follows. For $k \geq e$, the sphere
\[S^{k,e} = (S^{1,0})^{\wedge(k-e)} \wedge (S^{1,1})^{\wedge e}\]
is the smash product of the $1$-point compactifications of $k-e$ copies of the trivial and $e$ copies of the sign representation of $G$ on $\RR$. The indexing is inspired by the spheres in the motivic homotopy category. Topologically, $S^{k,e}$ is a $k$-sphere, while its real points $(S^{k,e})^{\sigma}$ form a sphere of dimension $k-e$.

The duality structure on $\E$ also induces a $G$-action on the edge-wise subdivision $ES(\E)^{\simeq}$, which respects the simplicial structure. Then the canonical map
\[\lambda\!: (ES(\E)^{\simeq})^{\sigma} \longinto ES(\E)^{\simeq} \longto S(\E)^{\simeq}\]
defines a module structure over the Hall algebra on its $0$-cells (the self-dual objects in $\E$) via the correspondence
\begin{equation}
  \begin{tikzcd}
    S_1(\E)^{\simeq} \times (ES_0(\E)^{\simeq})^{\sigma} & \ar[l, "{(\lambda_1,\del_0)}"'] (ES_1(\E)^{\simeq})^{\sigma} \ar[r, "{\del_1}"] & (ES_0(\E)^{\simeq})^{\sigma}.
  \end{tikzcd}
\end{equation}
This recovers in particular the Ringel-Hall modules first discovered in \cite{Matt}.

\begin{thm}[\cite{Matt2}, Theorem 3.6]
The map $\lambda\!: (ES(\E)^{\simeq})^{\sigma} \longto S(\E)^{\simeq}$ is relative $2$-Segal.
\end{thm}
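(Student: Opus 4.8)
The plan is to deduce the assertion from the unital $2$-Segal property of the Waldhausen construction $S(\E)$ established in Example \ref{Sdot2Segal}, together with the observation that the exact duality on $\E$ rigidifies a ``self-dual flag'' so that its second half is forced to be the chain of orthogonal complements of its first. Since $(-)^{\simeq}$ and $\sigma$-fixed points both preserve the finite $2$-limits in play, I would carry out all verifications at the level of the simplicial exact category and only pass to maximal subgroupoids at the end; by Lemma \ref{segalpts} this is harmless.

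First I would make the relevant structures explicit. The edgewise subdivision satisfies $ES(\E)_n = S_{2n+1}(\E)$, with face and degeneracy maps obtained by deleting, resp. doubling, $\sigma$-symmetric pairs of vertices of $[2n+1] \cong [n]^{\op} \star [n]$, and the duality $(-)^{\vee}\colon \E^{\op} \isoto \E$ endows $ES(\E)$ with the simplicial ``reverse-and-dualise'' involution $\sigma$ described above (cf. \cite{HM}, \cite{Dotto}); the functor $\lambda$ of the statement is the restriction to $\sigma$-fixed points of the canonical projection $ES(\E)^{\simeq} \to S(\E)^{\simeq}$ that restricts a flag to a half-interval. One then checks that an $n$-cell of $(ES(\E)^{\simeq})^{\sigma}$ is precisely the datum of a self-dual object $(M, q)$ together with a flag $0 \subseteq A_1 \subseteq \cdots \subseteq A_n$ of subobjects of $M$ with $A_n$ isotropic, the remaining terms of the length-$(2n+1)$ flag being pinned down by $A_{n+1+j} = A_{n-j}^{\perp}$; the face onto the inner pair returns the self-dual reduction $A_n^{\perp}/A_n$, while $\lambda_n$ returns the plain flag $0 \subseteq A_1 \subseteq \cdots \subseteq A_n$ in $\E$.

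Next I would verify the relative $2$-Segal squares. For each $n \geq 1$ and each $0 \leq i \leq n$, the square to be shown homotopy Cartesian is the $\sigma$-fixed part of the $2$-Cartesian square expressing the (unital) $2$-Segal property of $S(\E)$ after precomposition with edgewise subdivision --- concretely, the decomposition of a length-$(2n+1)$ flag into an initial segment and a tail glued along a common face. The point is that on the $\sigma$-fixed locus this decomposition \emph{decouples}: the initial segment together with $q$ determines the tail as the chain of orthogonal complements, and the remaining gluing datum is governed by the nondegenerate form induced on the reduction. Feeding in the $2$-Segal equivalences of $S(\E)$ (Example \ref{Sdot2Segal}) together with the elementary behaviour of orthogonal complements then produces exactly the required squares; unitality is dealt with in the same spirit, using the unital $2$-Segal structure of $S(\E)$ and the zero self-dual object.

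The main obstacle is that $\sigma$-fixed points do not commute with homotopy pullbacks for a general involution, so the last step genuinely needs an input. This is where the reduction assumption of \cite{Matt2} enters: it guarantees that $(-)^{\perp}$ is exact with $A^{\perp\perp} = A$ on the subobjects that occur, and that self-dual subquotients carry the expected forms, so that the involution respects the gluing decomposition above --- it acts on the middle term through the given involution on the reduction and does not mix the two halves --- and hence the categorical pullback squares stay $2$-Cartesian after passing to fixed points and maximal subgroupoids. Alternatively, one could route the whole argument through the abstract theory of real structures on $2$-Segal objects, the duality on $\E$ providing an instance, and simply invoke \cite{Tashi}; I would nevertheless spell out the direct argument for completeness.
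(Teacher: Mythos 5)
The paper does not prove this statement: it is imported verbatim from the cited source ([Matt2], Theorem~3.6), and the moduli-stack version (Corollary~\ref{rel2segal}) then follows by passing to $\M_{(-)}$ and Lemma~\ref{segalpts}. Your sketch is a faithful reconstruction of Young's argument: identify an $n$-cell of $(ES(\E)^{\simeq})^{\sigma}$, via $ES_n(\E)=S_{2n+1}(\E)$ and the reverse-and-dualise involution, with a self-dual object $(M,q)$ together with an isotropic flag $A_1\subseteq\cdots\subseteq A_n\subseteq M$ whose mirror half is the chain of orthogonal complements, and then verify the relative $2$-Segal squares by a direct flag computation that feeds in the $2$-Segal property of $S(\E)$. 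So the strategy is right and essentially the one used in the reference.

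One diagnosis in your final paragraph is off, although it does not derail the plan. The fixed points here are homotopy fixed points (a self-dual object carries a chosen symmetric isomorphism $M\isoto M^{\vee}$), and homotopy fixed points, being a homotopy limit over $\bee{(\ZZ/2)}$, do commute with homotopy pullbacks of equivariant diagrams. The genuine reasons the statement is not formal are rather: (i) the relative $2$-Segal squares mix $Y=(ES(\E)^{\simeq})^{\sigma}$ with the \emph{non-equivariant} base $X=S(\E)^{\simeq}$, so they are not the $\sigma$-fixed parts of equivariant $2$-Cartesian squares and must be identified combinatorially through $[n]^{\op}\star[n]\cong[2n+1]$ before the $2$-Segal property of $S(\E)$ can be applied; and (ii) the reduction assumption is needed earlier than you place it, namely to guarantee that $A^{\perp}$ exists as an admissible subobject and that $A^{\perp}/A$ inherits a nondegenerate form, i.e.\ to make the asserted description of the cells and inner faces of $(ES(\E)^{\simeq})^{\sigma}$ correct in the first place. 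With those two points relocated, your outline is sound.
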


Passing to moduli stacks, we obtain the relevant property for our purposes.

\begin{cor}\label{rel2segal}
The map $\lambda\!: (E\S(\E))^{\sigma} \longto \S(\E)$ is relative $2$-Segal.
\end{cor}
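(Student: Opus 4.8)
The plan is to deduce Corollary~\ref{rel2segal} from the simplicial-set statement (\cite{Matt2}, Theorem~3.6) by the same mechanism used throughout $\mathsection$\ref{sec6}--\ref{sec9}: applying $\M_{(-)}$ levelwise and invoking the fact that the relevant finite limits are preserved. Concretely, the relative $2$-Segal conditions for $\lambda\colon Y \to X$ are a family of assertions that certain squares built from the $Y_n$, $X_n$ and their face maps are pullbacks (cf.\ \cite{Matt2}); by Lemma~\ref{segalpts} and the general principle that limits of stacks are computed pointwise on $\Aff_K$, it suffices to check these pullback conditions after evaluating on each $R \in \Aff_K^{\op}$. So the first step is to reduce the claim for the simplicial stack $(E\S(\E))^{\sigma} \to \S(\E)$ to the corresponding claim for the groupoid-valued functors $R \mapsto (E S(\E_R)^{\simeq})^{\sigma} \to S(\E_R)^{\simeq}$.

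Next I would identify these pointwise functors with the simplicial-groupoid input of \cite{Matt2}. Here one uses that $\M_{S_n(\E)}(R) = (S_n(\E) \hatotimes_K R)^{\simeq} \simeq S_n(\hat\E_R)^{\simeq}$ (compatibly with the simplicial structure, since $S(-)$ and $-\hatotimes_K R$ and $(-)^{\simeq}$ are functorial), and likewise for the edgewise subdivision $ES(-)$; the duality structure on $\E$ base-changes to one on $\hat\E_R$, so the $G$-action and the fixed-point functor $(-)^{\sigma}$ also commute with $R \mapsto \hat\E_R$. Thus for every $R$ the map $(E\S(\E))^{\sigma}(R) \to \S(\E)(R)$ is precisely the map $\lambda$ of \cite{Matt2} for the category $\hat\E_R$, which satisfies the reduction assumption and carries an exact duality because $\E$ does. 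Therefore \cite{Matt2}, Theorem~3.6 applies verbatim to each $\hat\E_R$, giving that each pointwise map is relative $2$-Segal.

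Finally I would assemble: since each pullback square required by the relative $2$-Segal condition holds in $\Grpd$ after evaluation at every $R$, and since these squares in $\St K$ are detected pointwise (a square of stacks is a pullback iff it is so on $R$-points for all $R$), the square holds in $\St K$; hence $\lambda\colon (E\S(\E))^{\sigma} \to \S(\E)$ is relative $2$-Segal. The only mild subtlety — and the place I would be most careful — is checking that $(-)^{\sigma}$ (the $G$-fixed points of the edgewise subdivision) genuinely commutes with the base change $R \mapsto \hat\E_R$ and with passage to moduli stacks; this is where one must note that $(-)^{\sigma}$ is itself a finite limit (an equalizer against the $\sigma$-action), so it is preserved by the right adjoints $(-)^{\simeq}$ and by limits of stacks, and that Cauchy completion commutes with the relevant finite $2$-limits (Remark~\ref{cauchyadj}). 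Once this compatibility is in hand, the corollary is immediate, exactly parallel to how Lemma~\ref{segalpts} yields the $2$-Segal property of $\S(\E)$ from that of $S(\E)$.
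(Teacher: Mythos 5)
Your proposal is correct and is essentially the paper's own (one-line) argument spelled out in detail: the paper simply says "passing to moduli stacks" yields the corollary from \cite{Matt2}, Theorem~3.6, which implicitly is exactly your reduction via pointwise computation of limits (as in Lemma~\ref{segalpts}) together with the identification of $R$-points with the construction for $\hat\E_R$. Your extra care about $(-)^{\sigma}$ being a finite limit and hence commuting with $(-)^{\simeq}$ and Cauchy completion is a reasonable elaboration of what the paper leaves implicit.
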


\begin{defn}
Let $\D$ be a split proto-exact category. We define the equivariant motivic Hall module $\h^{\D}(\E)$ as the left $\H^{\D}(\E)$-module constructed in Theorem \ref{Hallmod} along the transfer theory $K_0^{\D}(\Sta -)$ via Corollary \ref{rel2segal}.

Similarly, if $K$ is non-archimedean, the analytic equivariant motivic Hall module $\h^{\D,\an}(\E)$ is the left $\H^{\D,\an}(\E)$-module defined analogously along the transfer theory $K_0^{\D}(\AnSta (-)^{\an})$.
\end{defn}

In order to construct the analogue of the integration map \eqref{integration} in this context, we consider the inclusions of $2$-skeleta, for all $n\in\NN$,
\[S^{2,1} \wedge |ES_n(\E)^{\simeq}| \longinto |S^{\pair 2}(ES_n(\E))^{\simeq}|.\]
Let $KR(\E) = \Omega^{2,1}|S^{\pair 2}(\E)|$ denote the real $K$-theory space of an exact category $\E$ with exact duality structure. Then
\begin{equation}\label{moduleint}
ES(\E)^{\simeq} \longto \pi_0 KR(ES(\E))
\end{equation}
arises as in \eqref{integration} above. Taking fixed points, we obtain the map of simplicial groupoids
\[(ES(\E)^{\simeq})^{\sigma} \longto \pi_0 KR(ES(\E))^{\sigma} = GW_0(ES(\E)).\]
The Grothendieck-Witt space $GW(\E)$ is shown to be weakly equivalent to $KR(\E)^{\sigma}$ in \cite{HM}.

Again, we may pass to moduli stacks to obtain the module integration map
\[\oint^{\sigma,(\bullet)}\!: (E\S(\E))^{\sigma} \longto \ubar{GW_0(ES(\E))}.\]
Now let $\T\!: \St K \dashto \V$ be a transfer theory such that the map $\lambda$ from Corollary \ref{rel2segal} satisfies the assumptions of Theorem \ref{Hallmod} with respect to $\T$. Assume that both $\M_{\E}$ and its substack of self-dual objects $\M_{\E}^{\sigma}$ are $\T$-proper. Then the pushforward along $\oint^{\sigma} = \oint^{\sigma,(1)}$ is a map
\begin{equation}\label{nontwistedmodint}
\oint^{\sigma}_*\!: \h(\E,\T) := \h(\S(\E),\T) \longto \T(\Spec K)[GW_0(\E)].
\end{equation}
If $\E$ is hereditary, $(\lambda_1,\del_0)\!: (E\S_1(\E))^{\sigma} \to \S_1(\E) \times (E\S_0(\E))^{\sigma}$ is $\T$-proper, and $[\AA^1_K/\GG_a]$ is measurable with $\mu_{\T}([\AA^1_K/\GG_a]) = 1$, then \eqref{nontwistedmodint} induces a $\H(\E,\T)$-module morphism
\[\int^{\sigma}_{\E}\!: \h(\E,\T) \longto \T(\Spec K)^{\pair{\zeta,\chi^{\op}+\chi^{\sigma}}}[GW_0(\E)].\]
Here, $\zeta$ is the Tate twist as in Theorem \ref{intthm}, and the self-dual Euler form is given by
\[\chi^{\sigma}\!: K_0(\E) \longto \ZZ,\ [A] \longmapsto \sum_{i\in\NN} (-1)^i\dim_K\Ext^i_{\E}(A^{\vee},A)^{(-1)^{i+1}\sigma},\]
see \cite{Matt}, Theorem 2.6. Then $\T(\Spec K)^{\pair{\zeta,\chi^{\op}+\chi^{\sigma}}}[GW_0(\E)]$ is a $\T(\Spec k)^{\pair{\zeta,\chi^{\op}}}[K_0(\E)]$-module via
\[\alpha.\gamma = \zeta^{\chi^{\op}(\alpha,\gamma) + \chi^{\sigma}(\alpha)}(\HH(\alpha)+\gamma) \mbox{ for } \alpha \in K_0(\E),\ \gamma \in GW_0(\E),\]
where $\HH\!: K_0(\E) \to GW_0(\E)$, $[A] \mapsto [A \oplus A^{\vee}]$, is the hyperbolic plane map. Finally, by pulling back this module structure along the integral $\int_{\E} -\ d\mu_{\T}$, it becomes a $\H(\E,\T)$-module.

The proof proceeds along the very same lines as that of Theorem \ref{intthm}. First, we consider the square of correspondences expressing the compatibility of module structures, together with the fibre product in the left square,
\[\begin{tikzcd}
  \S_1(\E) \times_K (E\S_0(\E))^{\sigma} \ar[dd, "{\oint\times_K\oint^{\sigma}}"'] & & \ar[ll, "{(\lambda_1,\del_0)}"'] (E\S_1(\E))^{\sigma} \ar[r, "{\del_1}"] \ar[dl, dashed, "{\delta}"'] \ar[dd, "{\oint^{\sigma,(2)}}"] & (E\S_0(\E))^{\sigma} \ar[dd, "{\oint^{\sigma}}"] \\
   & \ar[dl, phantom, "{\pullbackop}"' very near start] \ar[ul, "{\epsilon}", "{\sim}"'] \Z \ar[dr, "{\sigma}"'] & & \\
  \ubar{K_0(S_1(\E))} \times_K \ubar{GW_0(ES_0(\E))} & & \ar[ll, "{(\ell_1,d_0)}", "{\sim}"'] \ubar{GW_0(ES_1(\E))} \ar[r, "{d_1}"'] & \ubar{GW_0(ES_0(\E))}
\end{tikzcd}\]
where the isomorphism $(\ell_1,d_0)$ is a special case of the additivity theorem \cite{Schl}, Theorem 3.2. Again, we conclude that there is an obstruction given by $\delta_*\delta^*$, which we therefore have to show to be locally linear. The fibre of $\delta$ over $(A,C)$ parametrizes diagrams as follows,
\[\begin{tikzcd}
  0 \ar[r, tail] & A \ar[r, tail] \ar[d, two heads] \ar[dr, phantom, "{\Box}"] & A^{\perp} \ar[r, tail] \ar[d, two heads] \ar[dr, phantom, "{\Box}"] & E \ar[d, two heads] & \\
   & 0 \ar[r, tail] & C \ar[r, tail] \ar[d, two heads] \ar[dr, phantom, "{\Box}"] & A^{\perp\vee} \ar[d, two heads] \\
   & & 0 \ar[r, tail] & A^{\vee} \ar[d, two heads] \\
   & & & 0
\end{tikzcd}\]
and therefore, again by similar arguments as in Theorem \ref{intthm}, is equivalent to
\[[\AA^{\dim\Ext^1(C,A)}_K/\Ga^{\dim\Hom(C,A)}] \times_K [\AA^{\dim\Ext^1(A^{\vee},A)^{\sigma}}_K\!/\Ga^{\dim\Hom(A^{\vee},A)^{-\sigma}}],\]
since the subspace of $\Ext^1(A^{\vee},A^{\perp})$ where $E$ is self-dual is a $\Ext^1(A^{\vee},A)^{\sigma}$-torsor. Thus,
\[(\delta_{\alpha,\gamma})_*(\delta_{\alpha,\gamma})^* = \zeta^{\chi^{\op}(\alpha,\gamma)+\chi^{\sigma}(\alpha)}\cdot\id,\]
where $\delta_{\alpha,\gamma}$ is defined analogously to \eqref{deltaab}, which completes the proof.

The next step is to set up a general formalism of self-dual slope filtrations to obtain interesting recursions to integrate and resolve to compute motivic invariants of the corresponding moduli stacks of semistable self-dual objects. This is done in the case of quiver representations in \cite{Matt3}.

\bigskip

\bibliography{higherS}
\bibliographystyle{plain}

\bigskip

\end{document}